\numberwithin{equation}{section}
\newcounter{CountAlpha}
\theoremstyle{plain}
\newtheorem{MainThm}[CountAlpha]{Theorem}
\newtheorem{Thm}{Theorem}[section]
\newtheorem{Lem}[Thm]{Lemma}
\newtheorem{Prop}[Thm]{Proposition}
\newtheorem{Cor}[Thm]{Corollary}
\newtheorem{Hypothesis}[Thm]{Hypothesis}
\newtheorem{Claim}[Thm]{Claim}
\theoremstyle{definition}
\newtheorem*{MainDef}{Definition}
\newtheorem{Def}[Thm]{Definition}
\newtheorem{Rk}[Thm]{Remark}
\newtheorem{Ex}[Thm]{Example}
\newtheorem{Obs}[Thm]{Observation}
\newtheorem*{RkIntro}{Remark}
\newcommand{\hD}{\widehat{D}}
\newcommand{\hR}{\widehat{R}}
\newcommand{\hS}{\widehat{S}}
\newcommand{\hg}{\widehat{g}}
\newcommand{\hP}{\widehat{P}}
\newcommand{\hy}{\widehat{y}}
\newcommand{\hx}{\widehat{x}}
\newcommand{\hX}{\widehat{X}}
\newcommand{\hY}{\widehat{Y}}
\newcommand{\hf}{\widehat{f}}
\newcommand{\hJ}{\widehat{J}}
\newcommand{\cI}{\mathcal{I}}
\newcommand{\cL}{\mathcal{L}}
\newcommand{\cM}{\mathcal{M}}
\newcommand{\cP}{\mathcal{P}}
\newcommand{\cR}{\mathcal{R}}
\newcommand{\cS}{\mathcal{S}}
\newcommand{\cX}{\mathcal{X}}
\newcommand{\IA}{\mathbb{A}}
\newcommand{\IL}{\mathbb{L}}
\newcommand{\IK}{\mathbb{K}}
\newcommand{\IQ}{\mathbb{Q}}	
\newcommand{\IR}{\mathbb{R}}	
\newcommand{\IZ}{\mathbb{Z}}
\newcommand{\Eck}{\mathrm{Vert}}
\newcommand{\HS}{\mathrm{HS}}
\newcommand{\car}{\mathrm{char}}
\newcommand{\ord}{\mathrm{ord}}
\newcommand{\Dir}{\mathrm{Dir}}
\newcommand{\Rid}{\mathrm{Rid}}
\newcommand{\Spec}{\mathrm{Spec}}
\newcommand{\ini}{\mathrm{in}}
\newcommand{\cl}{\mathrm{cl}}
\newcommand{\gr}{\mathrm{gr}}
\newcommand{\Pol}{\mathrm{Pol}}
\newcommand{\wrt}{with respect to }
\newcommand{\poly}[3]{\Delta(  #1  ; #2 ; #3  ) }
\newcommand{\cpoly}[2]{\Delta(  #1  ; #2  ) }
\newcommand{\vp}{{\varphi}}
\newcommand{\fP}{{P}}
\newcommand{\fQ}{{Q}}
\newcommand{\hfP}{{\widehat{P}}}
\begin{document}
\title[\resizebox{4.5in}{!}{Characteristic polyhedra of singularities without completion II}]
{
Characteristic polyhedra of singularities without completion - Part II
}

\author[Vincent Cossart]{Vincent Cossart}
\address{Vincent Cossart\\
	Professeur \'em\'erite at 
	Laboratoire de Math\'emathiques LMV UMR 8100\\
Universit\'e de Versailles\\
45 avenue des \'Etats-Unis\\
78035 VERSAILLES Cedex \\
France}
\email{cossart@math.uvsq.fr}

\author[Bernd Schober]{Bernd Schober}

\address{Bernd Schober, 
	Laboratoire de Math\'emathiques LMV UMR 8100,
	Universit\'e de Versailles,
	45 avenue des \'Etats-Unis,
	78035 VERSAILLES Cedex,
	France\\
	and
	Institut f\"ur Algebraische Geometrie,
	Leibniz Universit\"at Hannover,
	Welfengarten 1,
	30167 Hannover,
	Germany}
\curraddr{Institut f\"ur Mathematik, Carl von Ossietzky Universit\"at Oldenburg, 26111 Oldenburg, Germany}
\email{bernd.schober@uni-oldenburg.de}

\thanks{The second named author was supported by Research Fellowships of the Deutsche Forschungsgemeinschaft (SCHO 1595/1-1 and SCHO 1595/2-1)
}

\keywords{Singularities, polyhedra, Hironaka's characteristic polyhedron, excellent rings}

\begin{abstract}
	Hironaka's characteristic polyhedron is an important combinatorial object reflecting the local nature of a singularity.
	We prove that it 
	can be determined 
	without passing to the completion if the local ring
	is a G-ring and if
	additionally {either}
	it is Henselian, 
	or a certain polynomiality condition $ (\mathrm{Pol}) $ holds, 
	or a mild condition $(*) $ on the singularity holds. 
	For example, the latter is fulfilled if the residue field
	is perfect. 
\end{abstract}

\maketitle

\tableofcontents

\section*{Introduction}
\label{Intro}

Let $ ( R, M, k = R / M ) $ be a regular local ring, $ J \subset R $ {be} a non-zero ideal and $ ( u ) = ( u_1, \ldots, u_e ) $ {be} a 
regular $ R $-sequence which is a regular $ (R/ J)$-sequence.  
In \cite{HiroCharPoly}, Hironaka associates a polyhedron $ \cpoly Ju $ to this situation, the so called characteristic polyhedron of $ ( J ; u) $, which is an important tool for the study of singularities.
(We refer to section 1 for a detailed definition).
It appears in his proof for resolution of singularities of excellent hypersurfaces of dimension two 
\cite{HiroBowdoin}
and also in the generalization to the case of arbitrary two dimensional excellent schemes by Jannsen, Saito and the first named author \cite{CJS},
see also \cite{HomeworkDim2}. 
Moreover, in \cite{BerndThesis} the second {named} author shows that the invariant introduced by Bierstone and Milman in order to give a proof for constructive resolution of singularities in characteristic zero can be purely determined by considering certain polyhedra, which are closely connected to Hironaka's polyhedron and its projections.
In \cite{MS_qo}, Mourtada and the first named author provide a characterization of quasi-ordinary hypersurfaces 
	(defined over algebraically closed fields of characteristic zero) 
	which involves re-embedding of a singularity constructed via a weighted version of Hironaka's characteristic polyhedron.  
Furthermore, the characteristic polyhedron plays an essential role in recent work by Piltant and the first named author \cite{CP1}, \cite{CP2}, \cite{CPmixed} on the resolution of singularities of (arithmetic) threefolds.

\smallskip 

Let $ (y) = (y_1, \ldots, y_r) $ be a system of elements in $ R $ extending $(u) $ to a regular system of parameters for $ R $. 
Every $ g \in M $ has a finite expansion
\[ 
g = \sum_{(A,B) \in \IZ^{e + r }_{\geq 0 } } C_{A,B} u^A  y^B
\] 
with coefficients $ C_{A,B} \in R^\times \cup \{ 0 \} $ (see \cite{CPmixed} Proposition~2.1).
If we have $ g \notin \langle u \rangle $, then $ \nu := n_{(u)}(g) := \inf\{ |B| \mid C_{0,B} \neq 0  \} $ is a positive integer
and the \emph{polyhedron associated to $ ( g ,u, y ) $}, denoted by $ \poly guy $, is defined {as} the smallest closed convex set containing all points of the set
(which {may be} empty)
\[
\left\{
\;	\frac{A}{\nu - |B|} \;\bigg| \; C_{A,B} \neq 0 \, \wedge \, |B| < \nu
\;
\right\} + \IR_{\geq 0}^e.
\]	
Let $ ( f ) = (f_1, \ldots , f_m ) $ be a $ (u) $-standard basis for $ J $ 
(which is a particular system of generators, for which we have $ f_i \notin \langle u \rangle $ in particular, see Definition~\ref{Def:u-standardbasis}).
The \emph{polyhedron associated to $ ( f, u, y ) $}, denoted by $ \poly fuy $, is defined {as} the smallest closed convex set containing $ \bigcup\limits_{ i = 1 }^m \poly{f_i}{u}{y} $. 
One has $ \poly fuy \supseteq \cpoly Ju $ and a natural questions arising is whether it is possible to obtain equality.

Suppose $ ( y ) $ determines the directrix of 
{$ J' := J \cdot R / \langle u \rangle $} 
(which is a technical condition that we explain below in more details).
In this situation, Hironaka proves that one can modify a given $ (u) $-standard basis $ ( f ) = (f_1, \ldots , f_m ) $  for $ J $ 
and the system $ ( y ) $ 
in a systematic way such that 
for the resulting elements $ ( \hf; \hy ) $,
we have 
\[  
	\poly \hf u\hy = \cpoly Ju = \cpoly{J \hR}u .
\]
More precisely, $ ( \hf ) $ and $ ( \hy ) $ are constructed by the process of ``vertex preparation".  
This procedure consists of two parts which are applied alternately: 
normalization of given generators and solving vertices of the associated polyhedron.
While the first one concerns certain good choices of the generators, the latter are translations of the system $ ( y ) $.

In general, solving vertices is not finite, see Example~\ref{Ex:HiroInfinite}, 
and hence it may become necessary to pass to the completion of $ R $.
The {\em goal of the present article} is to investigate under which assumptions on $ (J, R, u ) $
we may determine the characteristic polyhedron without having to pass to the completion.

In \cite{CPcompl}, Piltant and the first named author show that one can attain Hironaka's characteristic polyhedron if $ R $ is a G-ring, $ J $ is principal, and $ r = 1 $.
(Note that a regular local ring $ R $ is a G-ring if and 
only if $ R $ is excellent, by \cite{CPcompl} Lemma~3.1,
which we recall in Lemma~\ref{lem:G=>excellent}).
This turns out to be an essential tool in their proof for resolution of singularities in dimension three (see \cite{CPmixed} Proposition 2.4).
Therefore having in mind the goal of constructing an embedded resolution of singularities it is natural to ask whether this is true in a more general situation.

\smallskip 

Let us provide more details for stating the precise results.
Let us fix some  technical notation.
Set 
\[	 
	R' := R / \langle u \rangle, \ \ \ 
	M' := M \cdot R' \ \ \
	\mbox{ and } \ \ \  
	J' := J \cdot R' .
\]
Let  $ ( y ) = ( y_1, \ldots, y_r ) $ be a system of elements in $ R $ extending $ ( u ) $ to a regular system of parameters for $ R $.
The associated graded ring of $ R '$ is 
\[
	\gr_{M'} (R') := \bigoplus_{ t \geq 0 } (M')^t / (M')^{t+1} \cong k [\overline{Y_1}, \ldots \overline{Y_r}],
\]
where $ \overline{Y_j} := \overline{y_j} \mod {M'}^2 $ and 
$ \overline{y_j} := y_j \mod \langle u \rangle \in R' $, for $ 1 \leq j \leq r $.

Consider $ g \in J $.
Denote by $ n_{(u)} ( g )  $ the order of $ \overline{g} = g \mod \langle u \rangle $ at the ideal $ M' $. 
We define $ \ini_{M'} ( \overline{g} ) := \overline{g} \mod {(M')}^{ n_{(u)}( g ) + 1 } $, if $ \overline{g} \not\equiv 0 $, and $ \ini_{M'} ( \overline{g} ) := 0 $ if $ \overline{g} \equiv 0 $.
Let $ C_{u} (J) $ be the tangent cone of $ J' $ \wrt $ M ' $, i.e.,  
$ C_{u} (J) $ is the cone defined by the homogeneous ideal 
\[ 
	\ini_{M'} ( J') := \langle \, \ini_{M'} ( \overline{g} ) \mid g \in J \, \rangle 
	\subset \gr_{M'} (R').
\] 

The {\em directrix} $\Dir(C)$ of the cone $ C := C_{u} (J)  \subset \Spec (\gr_{M'} (R')) \cong \IA^r_k $  
(or, more generally, of any cone)
is the biggest subvector space of $ \IA^r_k  $ leaving $ C $ stable under translation.  
Suppose we can choose the elements $ ( y ) $ above such that 
the ideal of the directrix $ \Dir(C) $ is $ \langle \overline{Y}  \rangle = \langle \overline{Y_1}, \ldots \overline{Y_r} \rangle $.
Then $ ( \overline{Y} ) $ is a {\em minimal} set of variables needed to write generators of the homogeneous ideal $ \ini_{M'} (J') \subset gr_{M'}(R')$,
i.e., there is no proper $ k $-submodule $ T \subset \gr_{M'}^1(R') $, $ T \neq \gr_{M'}(R') $, such that
\[
( \, \ini_{M'} (J')  \cap k [T ] \,) \,  gr_{M'}(R') = \ini_{M'} (J') .
\]
(Here, $ \gr_{M'}^1(R') $ denotes the part homogeneous of degree $ 1 $). 
In the latter case, we will also say {\em $ ( y ) $ determines the directrix of $ J' $}.

\begin{MainThm}[joint with O.~Piltant%
	\footnote{
		In the proof for the existence of the elements $ (z_1, \ldots, z_r) $ (section~\ref{sec:empty_hensel}), 
		we follow ideas outlined to us by Olivier Piltant during a private conversation on Theorems~\ref{MainThm:*} and~\ref{MainThm:Pol}.}]
	\label{MainThm:Hensel}
	Let $ (R,M,k) $ be a regular local G-ring, 
	$ J \subset R $ {be} a non-zero ideal, and $ ( u ) = ( u_1, \ldots, u_e ) $ be a 
	regular $ R $-sequence which is a regular $ (R/J)$-sequence.  
	Set $ R' = R / \langle u \rangle $ and $ J' = J \cdot R' $.
	Let $ ( y ) = (y_1, \ldots, y_r ) $ be a system of elements in $R$ extending $ ( u ) $ to a regular system of parameters for $ R $ and suppose 
	that the directrix of $ J'$ is determined by $ ( y ) $.

	Suppose that $ R $ is Henselian.
	Then there exist $ ( z ) = ( z_1, \ldots, z_r ) $ and $ ( g ) = ( g_1, \ldots, g_m ) $ in $ R $ such that $ ( u, z ) $ is a regular system of parameters~for $ R $, the system $ ( z ) $ yields the directrix of $ J' $, $ ( g ) $ is a vertex-normalized $ (u) $-standard basis of $ J $, and 
	\[
	\poly guz = \cpoly Ju  .
	\] 
\end{MainThm}

Another important object associated to a cone and closely related to the directrix, is the so called ridge:
The {\em ridge} $\Rid(C)$ ({\em fa\^\i te} in French) of the cone $ C $ is the biggest group of translations of 
$ \IA^r_{k} $
leaving $ C $ stable.
The ideal of the ridge $ \Rid(C) $ is generated by {\em additive homogeneous polynomials} $ \Phi_1,\ldots,\Phi_s \in k [ \overline{Y}_1, \ldots, \overline{Y}_r ]$,
and we have 
({see} \cite{BHM} section~2.2, {for example})
\[
( \, \ini_{M'} (J')  \cap k [ \Phi_1, \ldots, \Phi_s ] \,) \, gr_{M'}(R') = \ini_{M'} (J') ,
\]
where we require $ s $ to be minimal with this property. 
(Recall that a polynomial $ \Phi $ is called additive if $ \Phi (x+y) = \Phi(x) + \Phi (y) $).
After possible relabeling and linear changes in $ (\overline{Y}) $, we have   
$  
	\Phi_i 
= \overline{Y_i}^{q_i} + \sum\limits_{j=i+1}^r  \lambda_{i,j} \overline{Y_j}^{q_i}, 
$
where $ \lambda_{i,j} \in k \setminus k^{q_i} $ and $ q_i = p^{e_i } $,
for some $ e_i \in \IZ_{\geq 0 } $ 
with $ e_i \leq e_{i+1} $,
$ 1 \leq i < j \leq s $.

\smallskip

\begin{MainDef}[$\boldsymbol{*}$]
	\label{Def:(*)}
	We say that {\em condition $(*)$ holds (for $ (J,R,u) $)} if one of the following conditions is true:
	\begin{enumerate}
		\item[(a)]  the dimension of the ridge of $ C_u(J) $ coincides with the dimension of its directrix,
		\item[(b)] or $ \car ( k ) \geq \dfrac{\dim (X)}{ 2} + 1 $, 
			where $ X := \Spec(R/J) $.
	\end{enumerate}
\end{MainDef} 

Although assumption $(*)$ seems rather restrictive at first sight, 
	it includes a large class of ideals and rings;
	e.g., $(a)$ holds if the residue field $ k $ is perfect.

\begin{MainThm}
\label{MainThm:*} 
	Let $ (R,M,k) $ be a regular local G-ring, 
	$ J \subset R $ {be} a non-zero ideal, and $ ( u ) = ( u_1, \ldots, u_e ) $ be a 
	regular $ R $-sequence which is a regular $ (R/J)$-sequence.  
	Set $ R' = R / \langle u \rangle $ and $ J' = J \cdot R' $.
	Let $ ( y ) = (y_1, \ldots, y_r ) $ be a system of elements in $R$ extending $ ( u ) $ to a regular system of parameters for $ R $ and suppose 
	that the directrix of $ J'$ is determined by $ ( y ) $.
	
	Suppose that condition $ (*) $ holds.
	Then there exist $ ( z ) = ( z_1, \ldots, z_r ) $ and $ ( g ) = ( g_1, \ldots, g_m ) $ in $ R $ such that $ ( u, z ) $ is a regular system of parameters~for $ R $, the system $ ( z ) $ yields the directrix of $ J' $, $ ( g ) $ is a vertex-normalized $ (u) $-standard basis of $ J $, and 
	\[
		\poly guz = \cpoly Ju  .
	\] 
\end{MainThm}

\smallskip 

Hironaka obtains the elements $ ( \hy) $ determining the characteristic polyhedron
as a translation by elements $ \widehat\phi_1, \ldots, \widehat\phi_r \in \langle u \rangle \hR$,
i.e.,
$ \hy_j = y_{ j} + \widehat\phi_j $.
In general, the elements $ (z) $ in Theorem~\ref{MainThm:Hensel} and~\ref{MainThm:*} cannot be obtained as such a translation of $ (y) $, see Example~\ref{Ex:HiroInfinite}.
Hence, it is natural to ask under which condition, this is possible, i.e., $ z_j = y_j + \phi_j $, for $ \phi_i \in \langle u \rangle \subset R $ and $ 1\leq j \leq r $.
In \cite{CPcompl} Corollary~3.4, this question is discussed for principal ideals such that $ r = 1 $. 

In our general situation, we introduce the following which generalizes the hypothesis of \cite{CPcompl} Corollary~3.4:

\begin{MainDef}[$ \boldsymbol \Pol $]
	Let $ (S,N,k) $ be a regular local $G$-ring contained in $ (R,M,k) $ and with the same residue field.
	Let $ J \subset R $ be a non-zero ideal.
	Suppose $ (u) = (u_1, \ldots, u_e) $ is a regular system of parameters for $ S $
	such that $ (u) $ is a regular $ (R/J) $-sequence.
	Let $ (f) = (f_1, \ldots, f_m ) $ be a $ (u) $-standard basis 
	of $ J $. 
	We say {\em hypothesis $ (\Pol) $ holds for $ (J,R,S,f,u,y) $} if
	$ R = S[y_1,\ldots,y_r]_{M} $,
	and
	$ f_i\in S[y_1,\ldots,y_r] $ 
	with 
	$ \deg_{y}(f_i) = n_{(u)} (f_i) $,
	for $ 1 \leq i \leq m $, 
	and the directrix of $ J'$ is determined by $ ( y ) $.
\end{MainDef}

\begin{RkIntro}
	As the reader maybe observes, $ (\Pol) $ looks like a multi-variable ``Weiertrass prepared" condition, but be aware that we allow in $ f_i $ terms of the form $ c_B y^B $ with $ |B| = \deg_y (f_i) $ and where the coefficient $ c_B \in  S $ is not necessarily a unit.
\end{RkIntro}

The condition $(\Pol)$ naturally arises when studying singularities (Remark~\ref{Rk:pol_important}).
In Proposition~\ref{Prop:Pol_often_and_else_smaller_dim}, we show that $ (\Pol) $ is always fulfilled when considering initial forms along compact faces of a polyhedron associated to $ (f;u;y) $.

\begin{MainThm}
	\label{MainThm:Pol}
	Let $ (R,M,k) $ and $ (S,N,k) $ be regular local G-rings such that
	$ R = S[y_1,\ldots,y_r]_{M} $. 
	Let
	$ J \subset R $ be a non-zero ideal and $ ( u ) = ( u_1, \ldots, u_e ) $ be a regular system of parameters for $S $ which is a 
	regular $ (R / J) $-sequence.
	Let $ ( f ) = (f_1, \ldots, f_m ) $ be a $ ( u ) $-standard basis for $ J $.

	Suppose that $ (\Pol) $ holds for $ (J,R,S,f,u,y) $.
	Then there exist $ \phi_1, \ldots, \phi_r \in \langle u \rangle \subset S $ 
	and $ ( g ) = ( g_1, \ldots, g_m ) $ in $ R $ such that,
	if we define $ z_j := y_j + \phi_j $, for $ 1 \leq j \leq r $, 
	then $ ( u, z ) $ is a regular system of parameters for $ R $, 
	the system $ ( z ) $ yields the directrix of $ J' $, 
	$ ( g ) $ is a vertex-normalized $ (u) $-standard basis of $ J $, and 
	\[
	\poly guz = \cpoly Ju  .
	\] 
\end{MainThm}

\smallskip

The article is organized as follows:
After providing background on Hironaka's characteristic polyhedron, 
we explain the reduction from a non-empty to an empty characteristic polyhedron
in section~\ref{sec:non-empty}, adapting the method of \cite{CPcompl}.
Then, we discuss the important case Theorem~\ref{MainThm:Pol}.
In particular, we introduce a finite normalization procedure if hypothesis~$ (\Pol) $ holds.
In section~\ref{sec:empty*} (resp., section~\ref{sec:empty_hensel}),
we show
how to obtain suitable parameters $ ( z ) $ if $ \cpoly Ju = \varnothing $
and if additionally hypothesis $(*)$ holds (resp., if $ R $ is Henselian). 
After that, in section~\ref{sec:empty_generators}, 
we explain how to find appropriate generators for $ J $ once $ (z) $ are given.
Finally, we provide remarks on the general case 
(e.g.,~sections~\ref{sec:non-empty} and~\ref{sec:empty_generators} apply for any regular local $ G$-ring)
 and discuss further examples.
	 
\smallskip

 \noindent
\emph{Acknowledgement:} 
The authors thank Olivier Piltant and Anne Fr\"uhbis-Kr\"uger 
for inspiring discussions on the topic.
Bernd Schober thanks 
{Dale Cutkosky and}
Orlando Villamayor for interesting discussions and for {their} hospitality during {visits to Missouri and Madrid, respectively}.

%
%
%
%
%
%
%
%
%
%
%
%
%
%
%
%
%

\medskip

\section{Hironaka's Characteristic Polyhedron}

To begin with, let us recall the definition of Hironaka's characteristic polyhedron.
More detailed references are section 7 of \cite{CJS}, section 2.2 of \cite{BerndThesis}, or Hironaka's original work \cite{HiroCharPoly}.
Along this, we prove three new technical results 
(Lemmas~\ref{Lem:nzld_(u)stdbasis_unique_exp_and_numb_of_gen} and~\ref{Lem:Norm_implies_Mini_VC} and Corollary~\ref{Cor:Norm_implies_Mini}).
We introduce some notations. 
Let
\begin{itemize}
	\item	$ ( R,  M, k = R/M  ) $ be a regular local ring,
	\item $ J \subset R $ be a non-zero ideal contained in the maximal ideal $ M $, and
	\item $ ( u , y ) = ( u_1, \ldots, u_e; y_1, \ldots, y_r ) $ be a regular system of parameters~of $ R $.
\end{itemize} 
Furthermore, we set 
\[ 
	R' := R / \langle u \rangle,
	\ \ 
	M' := M \cdot R',
	\ \
	J' := J \cdot R',
	\ \
	\overline{y_j} := y_j \mod \langle u \rangle
	\
	(1 \leq j \leq  r).
\]

While the choice of $ ( u ) $ is fixed, we will consider different choices for the system $ ( y ) $.
For the definition of the characteristic polyhedron, $ R $ is not necessarily a G-ring 
and (in the variant that we present)
the partition of the regular system of parameters~is arbitrary. 
The interesting situation later will be when $ ( y ) $ is chosen such that 
it yields the directrix of $ J' $ and $(u)$ is a regular $ (R/J)$-sequence.

If we denote by $ U_i := u_i \mod M^2 $ (resp.~$ Y_j = y_j \mod M^2 $) 
the image of $ u_i $ (resp. $ y_j $) in the associated graded ring $ \gr_{M} ( R) $ of $ R $, 
then we have
\[
	\gr_{M} ( R ) := 
	\bigoplus_{t\geq 0} M^t / M^{t+1} 
	\cong k [U_1, \ldots, U_e, Y_1, \ldots, Y_r ] = k [U, Y].
\]
 For $ g \in J $, $ g \neq 0 $, the order 
 at $ M $ is 
 $ \ord_M(g) := \sup \{ a \in \IZ_{\geq 0 } \mid g \in M^a \} $.
 The {\em initial form of $ g $ \wrt $ M $} is defined as
 \[	
	 \ini_M ( g ) := g \mod M^{\ord_M (g)+1} \in \gr_M (R).
 \]
	The {\em tangent cone of $ J $ \wrt $ M $} is the cone 
 $ 
	 C (J)  \subset \IA^{e+r}_k = \Spec ( \gr_M ( R ) )
 $
 defined by the homogeneous ideal 
 \[ 
 \ini_M ( J ) := \langle \ini_M ( g) \mid g \in J \rangle \subset \gr_M ( R ) .
 \]

Similarly to above, using the notation $ \overline{Y_j} := \overline {y_j} \mod (M')^2 $, we obtain
\[
	\gr_{M'} ( R' ) 
	\cong k [\overline{Y_1}, \ldots, \overline{Y_r} ] = k [\overline{Y}].
\]
Let $ C_u(J) := C(J') \subset \IA^r_k $ be
the tangent cone of $ J' $ \wrt $ M' $.

\smallskip

\begin{Def}
\begin{enumerate}
	\item 	Let $ L : \IR^e \to \IR $ be a
			{\em  semi-positive linear form} on $ \IR^e $.
				This means there are $ a_i \in \IR_{ \geq 0 } $ 
			 such that $ L ( v_1, \ldots, v_e ) = \sum\limits_{ i = 1 }^e a_i \, v_i $ for $ v = ( v_1, \ldots, v_e ) \in \IR^e $ 
			and at least one $a_i>0$.
			Further, $ L$ is {called} {\em rational} if $ a_i \in \IQ_{ \geq 0 } $, {for} $1\leq i \leq e$.
			If all $ a_i $ are positive, then $ L $ is called a {\em positive linear form}.
			We set
			\[
				\Delta ( L ) := \{ v \in \IR^e \mid L ( v ) \geq 1 \}.
			\]  
			The set of semi-positive (resp.~positive) linear forms on $ \IR^e $ will be denoted by $ \IL_0 = \IL_0 ( \IR^e ) $ (resp.~$ \IL_+ = \IL_+ ( \IR^e ) $).
	\item	A subset $ \Delta \subset \IR^e_{\geq 0 } $ is called a \emph{rational polyhedron} if there are rational semi-positive linear forms $ L_1, \ldots, L_t \in \IL_0 ( \IR^e ) $, $ t < \infty $, such that 
			\[		
				\Delta = \bigcap_{i = 1 }^t \, \Delta ( L_i ) .
			\]
	\item	A point $ v \in \IR^e_{\geq 0 } $ is called a \emph{vertex} of a convex set $ \Delta \subset \IR^e_{\geq 0} $ if there exists a positive linear form $ L \in \IL_+ (\IR^e) $ such that 
			\[
				\{ w \in \IR^e \mid L ( w ) = 1 \} = \{ \, v \, \} .
			\]
			We denote the set of vertices of $ \Delta $ by $ \Eck ( \Delta ) $.
\end{enumerate}
\end{Def}

\begin{Def}
	\label{Def:asso_poly}
\begin{enumerate}
	\item
	Let $ g \in R $ be an element in $ R $, $ g \notin \langle u \rangle $.
	Since $ R $ is Noetherian and $ R \to \hR $ is faithfully flat, 
	we can expand $ g $ in a \emph{finite} sum
	\begin{equation}
	\label{eq:expansion}
		g = \sum_{(A,B) \in \IZ^{e + r }_{\geq 0 } } C_{A,B} \, u^A  y^B
	\end{equation}
	with coefficients $ C_{A,B} \in R^\times \cup \{ 0 \} $.
	Denote by $ \nu := n_{(u)} ( g ) $ the order of $ \overline{g} = g \mod \langle u \rangle $ in the ideal generated by $ \overline{ y_j } = y_j \mod \langle u \rangle $, $ j \in \{ 1, \ldots, r \} $.
	The \emph{polyhedron associated to $ ( g ,u, y ) $}, denoted by $ \poly guy $, is defined to be the smallest closed convex set containing all the points of the set
	\[
		\left\{
			\;	\frac{A}{\nu - |B|} \;\bigg| \; C_{A,B} \neq 0 \, \wedge \, |B| < \nu
			\;
		\right\} + \IR_{\geq 0}^e.
	\]	
	
	\item
	Let $ ( f ) = ( f_1, \ldots, f_m ) $ be elements in $ R $ with $ f_i \notin \langle u \rangle $ for all $ i $.
	The \emph{polyhedron associated to $ ( f, u, y ) $}, denoted by $ \poly fuy $, is defined to be the smallest closed convex set containing $ \bigcup\limits_{ i = 1 }^m \poly{f_i}{u}{y} $
\end{enumerate}	
\end{Def}

In general, there are many choices for (\ref{eq:expansion}).
But, as it is explained in \cite{HiroCharPoly} at the beginning of \S 2, if we fix the regular system of parameters~$ ( u, y ) $ and if we require the number of appearing exponents to be minimal, then the set $ \{ (A,B) \mid C_{A,B} \neq 0 \} $ is uniquely determined 
(and the corresponding $ \ini_M(C_{A,B})$ are unique, even though the set $ \{ C_{A,B} \}  $ may vary).

\smallskip

If we consider an ideal $ J \subset R $ and generators $ ( f_1, \ldots, f_m ) $, then the polyhedron $ \poly fuy $ depends on the choice of the generators:

\begin{Ex}
\label{Ex:PolyGenDependent}
Let $ R $ be a regular local ring with regular system of parameters $ (u_1, u_2, y_1, y_2) $.
Consider the ideal $ J = \langle f \rangle \subset R $, where 
\[ 
	( f ) = ( f_1, f_2 ) = (\, y_1^2 + u_1^3, \, y_2^3 + u_2^7 \,) .
\]
Clearly, the systems
\[ 
\begin{array}{l} 
	(g ) = ( g_1, g_2 ) = ( f_1, f_2 + f_1 ) = (\, y_1^2 + u_1^3, \, y_2^3 + u_2^7 + y_1^2 + u_1^3 \, ), 
\\[5pt] 
 ( h ) = ( h_1, h_2 ) = ( f_1, f_2 + u_2^2 f_1 ) = (\, y_1^2 + u_1^3, \, y_2^3 + u_2^7 + u_2^2 ( y_1^2 + u_1^3) \, ) 
\end{array}
\]  
both generate $ J $.
The vertices of the corresponding polyhedra are given by
$ \Eck ( \poly fuy ) = \{ \, ( \frac{3}{2}, 0 ) ; \, (0, \frac{7}{3}) \, \} $ 
and $ \Eck ( \poly guy ) = \{ \, ( \frac{3}{2}, 0 ) ; \, (0, \frac{7}{2}) \, \} $
and $ \Eck ( \poly huy ) = \{ \, ( \frac{3}{2}, 0 ) ; \, (0, 2)  \, \} $.
Therefore, we have 
\[ 
	\poly guy \subsetneq \poly fuy \subsetneq \poly huy . 
\] 
\end{Ex}

\smallskip

In order to get hands on this dependence, we have to recall Hironaka's notion of a $ ( u ) $-standard basis of an ideal $ J $.

\begin{Def}
\label{Def:with_L}	Let $ (R, M, k ) $ be a regular local ring with regular system of parameters $ (u, y ) = ( u_1, \ldots, u_e, y_1, \ldots, y_r ) $.
	 Consider $ g = \sum C_{A,B} u^A y^B \in R $ with an expansion as in (\ref{eq:expansion}).
	\begin{enumerate}
		\item[(1)] 	(\cite{CJS} Setup A)
					The \emph{$ 0 $-initial form} of $ g $ is defined as 
					\[ 
					\ini_0 ( g ): = \ini_0 ( g )_{(u,y)} 
					=  \sum_{\substack{B \in \IZ^{r}_{\geq 0}\\ |B|=n_{(u)} ( g )}} \overline{C_{0,B}} \, Y^B \in k [Y_1, \ldots, Y_r]	,
					\]
					where $ \overline{C_{0,B}} = C_{0,B} \mod M $.
		\item[(2)]	(\cite{CJS} Definition 6.2(2))
					Let $ L \in \IL_+ ( \IR^e ) $ be a positive linear on $ \IR^e $ and set 
					\[  
						v_L ( g ) := v_L ( g )_{(u,y)} := \min \{  L( A ) + |B| \mid C_{A,B} \neq 0 \}.
					\]
					We define
					\[
						\ini_L (g ) := \ini_L (g )_{(u,y)} := \sum_{\substack{ (A,B) \in \IZ_{\geq 0}^{e + r}\\  L( A ) + |B| = v_L ( g )}} \overline{C_{A,B} } \, U^A \, Y^B \in k [ U, Y ]
					\]
					with $ \overline{C_{A,B}} = C_{A,B} \mod M $.
		\item[(3)]	(\cite{CJS} Definition 6.5)
					Let $ ( f ) = ( f_1, \ldots, f_m ) $ be a system of non-zero elements in $ R $.
					A positive linear form $ L \in \IL_+ ( \IR^e ) $ is called \emph{effective for $ ( f, u, y ) $} if $ \ini_L ( f_i ) \in k [ Y ] $, for all $ i \in \{ 1, \ldots, m \} $.
	\end{enumerate} 
\end{Def}

\begin{Def}[\cite{HiroCharPoly} Definition (2.20)]
	\label{Def:u-standardbasis}
	Let $ J \subset R $ be a non-zero ideal and 
	$( u ) = ( u_1, \ldots, u_e ) $ be a 
	regular $ R $-sequence.
	Let $ ( f ) = ( f_1, \ldots, f_m ) $ be a system of non-zero elements in $ J $.
	\begin{enumerate}
		\item 
		We say that $ (f) $ is a {\em standard basis of $ J $}, if the system 
		$ ( \ini_M(f)) = ( \ini_M(f_1), \ldots, \ini_M(f_m) ) $ is a standard basis of $ \ini_M(J) $, i.e., 
		\begin{enumerate}
			\item $ \ini_M(J) = \langle \ini_M(f_1), \ldots, \ini_M(f_m) \rangle \subset \gr_M(R) $,
			\item $ n_1 \leq n_2 \leq \ldots \leq n_m $, if $ n_i := \ord_M ( f_i ) $, and
			\item for all $ i \geq 1 $, we have $  \ini_M ( f_i ) \notin \langle \ini_M ( f_1 ), \ldots,  \ini_M ( f_{ i - 1 } ) \rangle $.
		\end{enumerate}
	
	\smallskip 
	
		\item 
		$ ( f ) $ is called a \emph{$ ( u ) $-effective basis of $ J $}, if there exists a system of elements $ (y) = (y_1, \ldots, y_r ) $ extending $ (u) $ to a regular system of parameters and a positive linear form $ L \in \IL_+ ( \IR^e ) $ such that $ \ini_L ( f_i ) =  \ini_0 ( f_i ) \in k [Y] $, for all $ i \in \{ 1, \ldots,  m \} $, and 
		\[
			\ini_L ( J ) := \langle \ini_L ( g ) \mid g \in J \rangle  = \langle \ini_0 ( f_1 ), \ldots, \ini_0 ( f_m ) \rangle .
		\]
		
		\item 
		A $ ( u) $-effective basis $ (f) $ is called a \emph{$ ( u ) $-standard basis of $ J $}, if additionally, 
		$ (\ini_0(f)) $ is a standard basis of $ \ini_L(J) $.
		In particular,
	\begin{enumerate}
		\item $ \nu_1 \leq \nu_2 \leq \ldots \leq \nu_m $, if $ \nu_i := n_{(u)} ( f_i ) = \ord_M (  \ini_0 ( f_i ) ) $, and
		\item for all $ i \geq 1 $, we have $  \ini_0 ( f_i ) \notin \langle \ini_0 ( f_1 ), \ldots,  \ini_0 ( f_{ i - 1 } ) \rangle $.
	\end{enumerate}
\end{enumerate}
The pair $ (y, L ) $ is called a \emph{reference datum} of the $ (u) $-effective basis.
\end{Def}

Since $ \ini_L ( f_i ) =  \ini_0 ( f_i ) \in k [Y] $, for all $  i \in \{1, \ldots, m \} $, $ L $ is effective for $ ( f ,u, y ) $.
Furthermore, if $ ( f ) $ is a $ ( u ) $-standard basis, 
then we have $ f_i \notin \langle u \rangle $, 
since $ \ini_L(f_i ) \neq 0 $ is a non-zero element in $ k[Y] $, 
for $ 1 \leq i \leq m $.
Note that in the previous example the system $ ( g ) $ is not a $ ( u ) $-standard basis for $ J $.

Whenever we speak of a $ ( u ) $-standard basis $ ( f ) $ and there are elements $ ( y ) $ fixed, we implicitly assume that there exists a positive linear form $ L \in \IL_+ ( \IR^e ) $ such that $ ( y, L ) $ is a reference datum for $ ( f ) $.

\begin{Rk}\label{Rk:existencestandard}
Let  $ ( f ) = ( f_1, \ldots, f_m ) $ be a standard basis for a non{-}zero ideal $ J \subset R $. 
We can choose a regular system of parameters
$(u,y) = ( u_1, \ldots, u_e,y_1, \ldots, y_r ) $ of $ R $
such that $\ini_M(f_i) \in k[Y_1, \ldots, Y_r]$, for $1\leq i \leq m$. 
Then $(f)$ is a $(u)$-effective basis of $J$ with reference datum $ (y, L ) $ where $L(v_1,\ldots,v_e)=v_1+\ldots+v_e$.
 \end{Rk}

Concerning the existence of a $ ( u ) $-standard basis, Hironaka proved 

\begin{Thm}[\cite{HiroCharPoly} Lemma (2.23) and Theorem (2.24)]
	Let $ R $ be a regular local ring
	with maximal ideal $ M $
	and let $ (u) = (u_1, \ldots, u_e) $ be a regular $ R $-sequence with $ u_i \in M $.
	Let $ J \subset M$ be a non zero ideal in $ R$.
	The following conditions are equivalent:
	\begin{enumerate}
		\item 
		There exists a $ (u) $-standard basis of $ J $.
		
		\item 
		There exists a $ (u) $-effective basis of $ J $.
		
		\item 
		$ ( u ) $ is a regular $ (R/J) $-sequence.
		
		\item 
		$ \gr_{\langle u \rangle} (R/J) $ is a polynomial ring in $ e $ variables over $ R/(J + \langle u \rangle) $.

		\item 
		$ \langle u \rangle R \cap J = \langle u \rangle J $.
	\end{enumerate}
\end{Thm} 

\begin{Rk}
	\begin{enumerate}
		\item
		The hypothesis $ R $ be a Noetherian local ring
		(not necessarily regular) suffices to get (3) $ \Leftrightarrow $ (4) $ \Leftrightarrow $ (5), 
		see
		\cite{HiroCharPoly} Lemma~(2.23). 
		
		\item 
		Hironaka provided more equivalent conditions, but in order to avoid more technical definitions, we skip them here. 
	\end{enumerate}
\end{Rk}

Let us recall the following important result on $ ( u ) $-standard bases:

\begin{Thm}[\cite{CJS} Theorem 6.9]
\label{Thm:CJSreferencedatum}
	Let $ ( f ) = ( f_1, \ldots, f_m ) $ be a $ ( u ) $-standard basis for an ideal $ J \subset R $.
	If $ ( y) = ( y_1, \ldots, y_r ) $ is a system extending $ ( u ) $ to a regular system of parameters~for $ R $ and $ L \in \IL_+ ( \IR^e ) $ is a positive linear form  which is effective for $ ( f, u, y ) $, then $ ( y, L ) $ is a reference datum for $ ( f ) $.
\end{Thm}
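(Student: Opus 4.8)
\emph{Proof strategy.} The plan is to use the hypothesis to fix a reference datum $(y^{(0)}, L^{(0)})$ for the $(u)$-standard basis $(f)$, and then to check that an arbitrary pair $(y, L)$ --- with $(y)$ extending $(u)$ to a {\RSP} and $L \in \IL_+(\IR^e)$ effective for $(f, u, y)$ --- satisfies every clause in the definition of a reference datum: the identity $in_L(f_i) = in_0(f_i) \in \resfield[Y]$ for all $i$, together with conditions (1), (2), (3). The first observation is that the integers $n_i = n_{(u)}(f_i) = \ord_{\maxIdeal'}(\overline{f_i})$ do not depend on the complement chosen for $(u)$: in the special $(u,y)$-expansion $f_i = \sum C_{A,B}\, u^A y^B$ the nonzero coefficients are units and hence do not vanish modulo $\maxIdeal$, so $n_i = \min\{\,|B| \mid C_{0,B} \neq 0\,\}$, and $in_0(f_i)$ is the nonzero homogeneous polynomial of degree $n_i$ obtained by reading the intrinsic initial form $in_{\maxIdeal'}(\overline{f_i}) \in \gr_{\maxIdeal'}(R')$ in the coordinates $Y$. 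In particular $\ord_{\maxIdeal}(in_0(f_i)) = n_i$, so the ordering $n_1 \le \cdots \le n_m$ is inherited, which settles condition (2). For condition (3) one uses that the membership of $in_0(f_i)$ in $\langle in_0(f_1), \ldots, in_0(f_{i-1})\rangle$ may be tested in $\gr_{\maxIdeal'}(R') = \resfield[Y]$ --- via the retraction $\gr_{\maxIdeal}(R) \to \resfield[Y]$, $U_j \mapsto 0$ --- and that changing $(y)$ to $(y^{(0)})$ induces a graded $\resfield$-algebra automorphism of $\gr_{\maxIdeal'}(R')$ carrying the system $(in_{\maxIdeal'}(\overline{f_j}))_j$ to itself; thus (3) for $(y)$ is the same intrinsic statement as (3) for $(y^{(0)})$, which holds by the original datum.

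Next, the identity $in_L(f_i) = in_0(f_i)$. Since $L$ is positive, only the pure-$Y$ monomials $(0, B)$ realise $L$-weight $|B|$, and the smallest such weight is $n_i$; a mixed monomial $(A, B)$ with $A \neq 0$ and $L(A) + |B| \le n_i$ would appear in $in_L(f_i)$, contradicting effectiveness. Hence $v_L(f_i) = n_i$ and $in_L(f_i)$ is precisely the pure-$Y$, degree-$n_i$ part of $f_i$, namely $in_0(f_i) \in \resfield[Y]$. In particular $\langle in_0(f_1), \ldots, in_0(f_m)\rangle \subseteq In_L(J)$, so for condition (1) only the reverse inclusion remains.

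For the reverse inclusion I would argue in two moves. \emph{First}, extract from the original datum the intrinsic fact that $(\overline{f_1}, \ldots, \overline{f_m})$ is a standard basis of $J'$ with respect to $\maxIdeal'$, i.e.\ $In_{\maxIdeal'}(J') = \langle in_{\maxIdeal'}(\overline{f_1}), \ldots, in_{\maxIdeal'}(\overline{f_m})\rangle$. Given $g \in J$ with $v_{L^{(0)}}(g) < \ord_{\maxIdeal'}(\overline{g})$, the form $in_{L^{(0)}}(g)$ contains only monomials involving some $U_j$, so in any expression $in_{L^{(0)}}(g) = \sum_i P_i\, in_0(f_i) \in In_{L^{(0)}}(J) = \langle in_0(f_i)\rangle$ one may take each $P_i \in \langle U_1, \ldots, U_e\rangle$; lifting and subtracting the corresponding $\langle u\rangle$-combination of the $f_i$ replaces $g$ by $g_1 \in J$ with $\overline{g_1} = \overline{g}$ and strictly larger $v_{L^{(0)}}$. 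Since $v_{L^{(0)}}$ of the successive elements stays $\le \ord_{\maxIdeal'}(\overline{g})$ and lies in a discrete group, after finitely many steps one reaches $g^\ast \in J$ with $\overline{g^\ast} = \overline{g}$ and $v_{L^{(0)}}(g^\ast) = \ord_{\maxIdeal'}(\overline{g})$; reducing $in_{L^{(0)}}(g^\ast) \in \langle in_0(f_i)\rangle$ modulo $\langle U_1, \ldots, U_e\rangle$ yields $in_{\maxIdeal'}(\overline{g}) = in_0(g^\ast) \in \langle in_{\maxIdeal'}(\overline{f_i})\rangle$. As this property of $(\overline{f})$ mentions neither $(y)$ nor $L$, it is available in every admissible chart, and hence $\langle in_0(f_1), \ldots, in_0(f_m)\rangle = In_{\maxIdeal'}(J')\cdot \gr_{\maxIdeal}(R)$ no matter which $(y)$ is used.

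\emph{Second}, and here I expect the main obstacle, one must prove the converse implication: if $(\overline{f})$ is an $\maxIdeal'$-standard basis of $J'$ and $L$ is effective for $(f, u, y)$, then $In_L(J) \subseteq \langle in_0(f_1), \ldots, in_0(f_m)\rangle$. The natural route is Hironaka's division algorithm with respect to the monomial valuation $v_L$ (weight $a_i$ on $u_i$, weight $1$ on $y_j$): one divides an arbitrary $g \in J$ by $(f)$, with the $in_0(f_i)$ controlling the staircase, notes that the remainder again lies in $J$, and concludes that it must vanish, whence $in_L(g) \in \langle in_0(f_i)\rangle$. The delicate points are the termination of this division and the transfer of the $\maxIdeal'$-standard-basis information about $J'$ to $v_L$-information about $J$; it is convenient to pass to $\hR$, where $(\overline{f})$ remains a standard basis of the corresponding ideal in $\hR/\langle u\rangle$, division terminates by completeness, and the equalities $\gr_{\maxIdeal}(R) = \gr_{\maxIdeal}(\hR)$ and $In_L(J) = In_L(\hJ)$ permit the conclusion to descend to $R$. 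At bottom the input one needs is that $\gr_{v_L}(R/J) \cong \gr_{\maxIdeal'}(R'/J')[U_1, \ldots, U_e]$ --- equivalently, that $(u)$ behaves like a regular sequence on the $v_L$-degeneration of $R/J$, making the evident surjection $\gr_{\maxIdeal'}(R'/J')[U] \twoheadrightarrow \gr_{v_L}(R/J)$ an isomorphism --- a transversality property one expects to be forced by the $(u)$-standard-basis hypothesis. Once condition (1) is established, $(y, L)$ meets all the defining requirements and is therefore a reference datum for $(f)$.
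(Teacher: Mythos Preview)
The paper does not contain a proof of this statement; it is quoted from \cite{CJS}, Theorem~6.9, and invoked as a black box (for instance in the proof of Lemma~\ref{Lem:nzld_(u)stdbasis_unique_exp_and_numb_of_gen}). There is thus no in-paper argument to compare your proposal against.

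Taken on its own merits, your outline is a reasonable scaffold and correctly locates the difficulty. The verifications of conditions (2), (3), and of the identity $in_L(f_i)=in_0(f_i)$ via effectiveness are sound; so is your ``first move'' extracting from the original reference datum the intrinsic fact that $(\overline{f_1},\ldots,\overline{f_m})$ is an $\maxIdeal'$-standard basis of $J'$. The genuine gap is exactly where you flag it: the ``second move'' establishing $In_L(J)\subseteq\langle in_0(f_1),\ldots,in_0(f_m)\rangle$ for an arbitrary effective $L$. Your proposed $v_L$-division in $\hR$, with staircase governed by the $\exp(f_i)$, only forces the remainder $r$ to satisfy $\overline r=0$ (via the $\maxIdeal'$-standard-basis property of $(\overline f)$ in $\resfield[Y]$), i.e.\ $r\in J\cap\langle u\rangle$; it does not by itself yield $r=0$ or $in_L(r)\in\langle in_0(f_i)\rangle$, since the $r_j$ in any decomposition $r=\sum u_j r_j$ need not lie in $J$. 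Closing this gap requires precisely the structural input you name at the end, namely that $\gr_{v_L}(R/J)\cong\gr_{\maxIdeal'}(R'/J')[U_1,\ldots,U_e]$, and proving that is essentially the content of the cited theorem in \cite{CJS} rather than a consequence of what you have set up. So the skeleton is right, but the load-bearing step is asserted rather than supplied.
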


\begin{Def}
	Let $ J \subset R $ be a non-zero ideal and 
	$ ( u ) = (u_1, \ldots, u_e ) $ be a system of elements as before.	
		Let $ ( y ) = ( y_1, \ldots, y_r ) $ be a system of elements extending $ ( u ) $ to a regular system of parameters~of $ R $.
		We define
		\[ 
			\poly Juy := \bigcap_{ (f) }  \poly fuy,
 		\]
		where the intersection runs over all possible $ ( u ) $-standard bases $ ( f ) $ 
		of $ J $ (in particular, there exists a positive linear form $ L \in \IL_+ ( \IR^e ) $ such that $ ( y, L ) $ is a reference datum for $ ( f ) $). 
		Further, we set
		\[
			\cpoly Ju := \bigcap_{(y)} \poly Juy,
		\]
		where the intersection ranges over all systems $ ( y ) $ extending $ ( u ) $ to a regular system of parameters~of $ R $.
		The polyhedron $ \cpoly Ju $ is called the \emph{characteristic polyhedron of $ J $ \wrt $ ( u ) $}.
\end{Def}

This is not Hironaka's original definition,
but the following result by Hironaka implies that the definitions coincide
in the relevant setting.

\begin{Thm}[Hironaka]
\label{Thm:Hironaka}
		Let $ R $ be a regular local ring,
		$ J \subset R $ be a non-zero ideal, and 
		$ ( u ) = ( u_1, \ldots, u_e ) $ be a 
		regular $ R $-sequence which is a regular $ (R/ J)$-sequence.  
		Set $ R' = R / \langle u \rangle $ and $ J' = J \cdot R' $.
		Let $ ( y ) = ( y_1, \ldots, y_r ) $ be a system of elements in $ R $ extending $ ( u ) $ to a regular system of parameters~of $ R $ and assume that $ ( y ) $ yields the ideal generating the directrix of $ J' $.
		 
		There exists a $ ( u )$-standard basis $ ( \hf ) = ( \hf_1, \ldots, \hf_m ) $ in $ \hR $ and a system of elements $ ( \hy ) = ( \hy_1, \ldots, \hy_r) $ such that $ ( u, \hy ) $ is a regular system of parameters~of $ \hR $, $ ( \hy ) $ determines the directrix of $ J' $ and 
		$
			\poly{\hf}{u}{\hy} = \cpoly Ju.
		$ 
\end{Thm}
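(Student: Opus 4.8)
The plan is to prove the theorem the way Hironaka does in \cite{HiroCharPoly}, namely by the process of \emph{vertex preparation} carried out inside the complete ring $\hR$, where the (a priori infinite) sequences of coordinate adjustments that occur converge $\maxIdeal$-adically. There are two elementary operations, each of which replaces $\poly{f}{u}{y}$ by a (weakly) smaller $F$-subset. \emph{Normalization:} given a $(u)$-standard basis $(f) = (f_1,\dots,f_m)$ with reference datum $(y,L)$ and expansions $f_i = \sum C_{A,B}\, u^A y^B$, subtract from each $f_i$ suitable $R$-combinations $\sum_{j<i} (\text{monomial in } u, y)\, f_j$ so as to remove from the vertices of $\poly{f_i}{u}{y}$ every monomial $U^A Y^B$ whose class in $\gr_\maxIdeal(R)$ lies in $\langle in_0(f_1), \dots, in_0(f_{i-1}) \rangle$; since $(f)$ is a $(u)$-standard basis one has $In_L(J) = \langle in_0(f_1), \dots, in_0(f_m) \rangle$, so such a reduction is always available, and it changes neither $J$ nor the directrix of $J'$. \emph{Solving a vertex:} if $v \in \Eck(\poly{f}{u}{y}) \cap \IZ^e$ is solvable — so that, after the $\resfield$-linear substitution $Y_j \mapsto Y_j - \lambda_j U^v$ dictated by the initial forms of $f$ at $v$ (cf.\ Definition~\ref{Def:with_L}), the contribution of $v$ to the polyhedron disappears — lift this to $R$ by replacing $y_j$ with $y_j - \widetilde\lambda_j u^v$, where $\widetilde\lambda_j \in R$ lifts $\lambda_j$. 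Because $v \neq 0$ and hence $u^v \in \langle u \rangle$, the new system still extends $(u)$ to a {\RSP}~and still yields the directrix of $J'$ (its $0$-initial forms, living in $\resfield[\overline Y]$, are unchanged), while the operation removes $v$.

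First I would organize these operations into a single sequence. Order the points of $\IR^e_{\geq 0}$ so that $v < w$ whenever $\delta(v) := |v| = v_1 + \cdots + v_e$ is smaller (refined arbitrarily), and at each stage pick the smallest vertex that is not yet both normalized and unsolvable, normalize around it, and solve it if possible. A short computation shows that any adjustment at a vertex $v$ modifies the expansions of the $f_i$ only in monomials lying at $\delta$-level $\geq \delta(v) \geq \delta(\poly{f}{u}{y})$, so that minimal-level faces of the polyhedron are never enlarged; the heart of the combinatorial argument (carried out in \cite{HiroCharPoly}, see also \S 7 of \cite{CJS}) is a nested induction, on the level $\delta$ and on the combinatorial complexity of the $\delta$-minimal face, showing that the process stabilizes below any given level after finitely many steps. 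Consequently the corrections have $\maxIdeal$-adic orders tending to infinity, the sequences $(f^{(k)})$ and $(y^{(k)})$ are $\maxIdeal$-adically Cauchy in $\hR$, and, using $\maxIdeal$-adic continuity of $in_0$, $in_L$, $v_L$ together with Theorem~\ref{Thm:CJSreferencedatum}, their limits $(\hf)$ and $(\hy)$ form a $(u)$-standard basis of $\hJ = J \cdot \hR$ and a completion of $(u)$ to a {\RSP}~of $\hR$ with $(\hy)$ determining the directrix of $J'$. By construction $\poly{\hf}{u}{\hy} = \bigcap_k \poly{f^{(k)}}{u}{y^{(k)}}$ is normalized and has no solvable vertex; call such a polyhedron \emph{totally prepared}.

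It remains to identify $\poly{\hf}{u}{\hy}$ with $\cpoly Ju$. For ``$\subseteq$'' I would invoke Hironaka's comparison principle: a totally prepared $\poly{\hf}{u}{\hy}$, computed with respect to coordinates yielding the directrix of $J'$, is contained in $\poly{g}{u}{z}$ for \emph{every} $(u)$-standard basis $(g)$ of $\hJ$ and every system $(z)$ extending $(u)$ to a {\RSP}~of $\hR$. Its proof decomposes the coordinate change $\hy \rightsquigarrow z$ into its linear part — an invertible $\resfield$-linear substitution among the $Y_j$, available precisely because both systems yield the directrix, leaving the polyhedron unchanged — and a higher-order part $z_j \mapsto z_j - \phi_j(u,y)$, for which a vertex-by-vertex comparison (using that $(\hf)$ is normalized) shows that passing from $\hy$ to $z$ can only delete solvable vertices, of which there are none. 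Since every $(u)$-standard basis of $J$ over $R$ is also one of $\hJ$ over $\hR$ with the same associated polyhedron, intersecting over all such data gives $\poly{\hf}{u}{\hy} \subseteq \cpoly Ju$. For ``$\supseteq$'', note that the partial data $(f^{(k)}, y^{(k)})$ produced during preparation lie in $R$ and are $(u)$-standard bases of $J$, so $\cpoly Ju \subseteq \poly{f^{(k)}}{u}{y^{(k)}}$ for every $k$ by definition of the characteristic polyhedron; letting $k \to \infty$, the right-hand sides decrease (with $\maxIdeal$-adic stabilization at each level) to $\poly{\hf}{u}{\hy}$, whence $\cpoly Ju \subseteq \poly{\hf}{u}{\hy}$.

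The main obstacle is the convergence/finiteness statement in the second paragraph: since solving vertices is genuinely an infinite process in general (cf.\ Example~\ref{Ex:HiroInfinite}), one must show that the interleaving of normalizations and vertex-solvings is ``well-founded'' below every level, so that the corrections' $\maxIdeal$-adic orders grow and the limit really exists in $\hR$ as a $(u)$-standard basis; keeping track of the interaction of the two operations (a normalization can create new vertices, and solving a vertex can un-normalize a generator) is where the real bookkeeping lies. The secondary difficulty is Hironaka's comparison principle itself — the claim that ``normalized plus no solvable vertex'' forces minimality among all choices of $(y)$ — which is the technical core, and where the hypothesis that $(y)$ determines the directrix of $J'$ enters in an essential way.
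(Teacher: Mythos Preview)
The paper does not give its own proof of this theorem: it is quoted verbatim as Hironaka's Theorem~(4.8) from \cite{HiroCharPoly}, and the paper only remarks afterwards that ``in the proof one obtains $(\hf)$ and $(\hy)$ by applying the procedure of vertex preparation which consists of alternately normalizing the generators and solving the vertices of $\poly fuy$,'' before recalling the two constituent operations (Propositions~\ref{Prop:Nlz} and~\ref{Prop:SolVert}). Your sketch is therefore a reconstruction of Hironaka's argument rather than a comparison object, and as such it is faithful: the two-operation loop, the total ordering on vertices, the $\maxIdeal$-adic convergence in $\hR$, and the identification of the totally prepared polyhedron with $\cpoly Ju$ are exactly the ingredients Hironaka uses and the paper alludes to.

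Two small remarks on the fit with the paper's conventions. First, the paper's definition of $\cpoly Ju$ is already the intersection over all $(y)$ in $R$ and all $(u)$-standard bases in $R$ (not Hironaka's original definition; the paper explicitly notes this just before the theorem). With this definition your ``$\supseteq$'' direction is the tautological one, as you observe, modulo the continuity statement $\bigcap_k \poly{f^{(k)}}{u}{y^{(k)}} = \poly{\hf}{u}{\hy}$. Second, your honest flagging of the two genuine difficulties --- the well-foundedness of the interleaved process below each level, and the minimality of a totally prepared polyhedron among all coordinate choices --- matches precisely what the paper leaves to the cited reference; these are indeed the substantive parts of Hironaka's argument, and the rest of the paper (Sections~2--4) is devoted to circumventing the first of them under extra hypotheses so as to stay inside $R$.
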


\noindent 
This follows directly from \cite{HiroCharPoly} Theorems (3.17) and (4.8) that we recall 
as Theorems \ref{Thm:3.17} and \ref{Thm:4.8} below,
after introducing some notions.
We use here the fact $ \cpoly{\widehat{J}}u = \cpoly Ju $ (\cite{HiroCharPoly} Lemma (4.5)), where $ \widehat{J} = J \cdot \widehat{R} $.
Throughout the paper, we will make use of this without mentioning.
In the proof of the theorem one obtains $ ( \hf ) $ and $ ( \hy ) $ by applying the procedure of vertex preparation which consists of alternately normalizing the generators and solving the vertices of $ \poly fuy $. 
Let us recall these two processes.

We begin with normalization.
For this, we introduce the following total ordering $ \preceq $ on $ \IZ^r $: 
for $ A, B \in \IZ^r $, we define
\begin{equation}
\label{eq:def_prec_succ} 
	{A  \preceq B 
	 \ \ \stackrel{\mathrm{def.}}{\Longleftrightarrow} \ \ \ 
 	|A| < |B| \ \mbox{ or } \ 
 	(|A| = |B| \mbox{ and } A >_{lex} B),} 
\end{equation} 
where $ \leq_{lex} $ denotes the lexicographical ordering on $ \IZ^r $.
 
For a non-zero, homogeneous polynomial $ 0 \neq G = \sum \lambda_{B} \,Y^B \in k[Y_1, \ldots, Y_r ] $, 
we define
the \emph{(leading) exponent of $ G $} by
\begin{equation}
\label{eq:def_exp}
	\exp ( G ) := {\min_{\preceq}} \{ B \in \IZ^r \mid \lambda_{B} \neq 0  \}.
\end{equation} 
The \emph{(leading) exponent of a non-zero, homogeneous ideal $ I \subset k[Y_1, \ldots, Y_r ] $}
is defined as the collection
\[
	\exp ( I ) := \{ \exp ( G ) \mid 0 \neq G \in I \mbox{ homogeneous } \}.
\]

If $ ( f) = ( f_1, \ldots, f_m) $ is a $ ( u ) $-standard basis of the ideal
that they generate, then we abbreviate
$
	\exp(f_i) := \exp (\ini_0 (f_i) )
$
and
\begin{equation}
\label{eq:def_exp(f1...fm)}
	\exp ( \langle f_1, \ldots, f_{i} \rangle ) :=
	\exp ( \langle \ini_0 (f_1), \ldots, \ini_0(f_{i}) \rangle ),
\end{equation} 
for $ i \in \{ 1, \ldots, m \} $.
(Note that $ f_i \notin \langle u \rangle $ and hence $ \ini_0 ( f_i ) \neq 0 $).

\begin{Def}[\cite{CJS} Definition~7.11]
	\label{Def:poly_nlzd}
	Assume given $ G_1, \ldots, , G_m \in k[[U]][Y] =  k[[U_1, \ldots, U_e]][ Y_1, \ldots, Y_r] $,
		\[
			G_i = F_i(Y) + \sum_{|B| < n_i} P_{i,B}(U) Y^B,
			\ \ \
			(P_{i,B}(U) \in k[[U]]), 
		\]
		where $ F_i(Y) \in k[Y] $ is homogeneous of degree $ n_i $ and {$ P_{i,B}(U) \in \langle U \rangle $}.
		\begin{enumerate}
			\item 
			$(F_1, \ldots, F_m) $ is {\em normalized} if writing
			\[
				F_i (Y) = \sum_B C_{i,B} Y^B
				\ \ \
				\mbox{ with } C_{i,B} \in k,
			\]
			$ C_{i,B} = 0 $ if $ B \in \exp(F_1, \ldots, F_{i-1}) $ for $ i \in \{ 1, \ldots, m \} $.
			\item 
			$ (G_1, \ldots, G_m) $ is {\em normalized}
			if $ (F_1, \ldots, F_m ) $ is normalized and 
			$ P_{i,B} \equiv 0 $ if $ B \in \exp(F_1, \ldots, F_{i-1}) $ for $ i \in \{ 1, \ldots, m \} $.
		\end{enumerate}
\end{Def}

\begin{Def}
	\label{Def:nlzd_in_v}
	Let $ ( f ) = ( f_1, \ldots, f_m ) $ be a $ ( u ) $-standard basis 
	of an ideal $ J \subset R $.
	Set $  \nu_i := n_{ (u) } ( f_i ) $.
	Let $ f_i = \sum C_{A,B,i} \, u^A \, y^B $ be finite expansions as in (\ref{eq:expansion}) with $ C_{A,B,i} \in R^\times \cup \{ 0 \} $.
	\begin{enumerate}
	\item 
	$ ( f ) $ is called \emph{$ 0 $-normalized} if the corresponding system of $ 0 $-initial forms $ (\ini_0 ( f_1 ), \ldots, \ini_0 (f_m) ) $ is 
	normalized in the sense Definition~\ref{Def:poly_nlzd}(1) 
	(with respect to $(Y)=(Y_1,\ldots,Y_r)$).
	\item 
	Let $ v \in \Eck ( \poly fuy ) $ be a vertex of $ \poly fuy $.
				For $ 1 \leq i \leq  m $, the \emph{$ v $-initial form} of $ f_i $ is defined as
				\[
					\ini_v ( f_i ) := \ini_v ( f_i )_{(u,y)} := \ini_0 ( f_i ) + \sum \overline{C_{A,B,i}} \, U^A \, Y^B \in k [ U, Y ],
				\]
				where the sum ranges over those $ ( A,B ) \in \IZ^{ e + r } $ with $ \frac{A}{ \nu_i  - |B| } = v $, and $ \overline{ C_{A,B,i}} = C_{A,B,i} \mod M $.
				
				We say $ ( f ) $ is \emph{normalized at the vertex $ v $} if $ ( \ini_v ( f_1), \ldots, \ini_v ( f_m )) $ is normalized 
				in the sense of Definition~\ref{Def:poly_nlzd}(2)
				with respect to $(Y)$. 
				If $ ( f ) $ is normalized at every vertex of $ \poly fuy $ then we call $ ( f ) $ \emph{vertex-normalized} (\wrt $ ( u, y ) $). 
	\end{enumerate}
	Most of the time, if there is no possible confusion, we will skip the locutions 
	``with respect to $(u,y)$''
	and
	``with respect to $(Y)=(Y_1,\ldots,Y_r)$''.
\end{Def}

Let us point out:
although we mentioned it only in the last part of the definition, all these notions depend on the choice of the regular system of parameters~$ ( u, y ) $.

\begin{Lem}[\cite{HiroCharPoly} Lemma (3.14)]
	\label{Lem:0Nlz}
	Let $ ( f ) = ( f_1, \ldots, f_m ) $ be generators for $ J \subset R $.
	Let $ ( u, y ) 
		= ( u_1, \ldots, u_e,y_1, \ldots, y_r ) $ be a 
	regular system of parameters of $ R $
	and set $ \nu_i := n_{(u)}(f_i) $, for $ 1 \leq i \leq m $.
	Suppose $ (\ini_0 ( f_1 ), \ldots, \ini_0 (f_m) ) $ is a standard basis for the ideal that it generates in $ k [Y_1, \ldots, Y_r]$.
	
	There exist elements $ d_{i,j} \in \langle y \rangle^{\nu_i - \nu_j } \subset R $,
	for $ 1 \leq j < i \leq m $, 
	such that if we set $ g_1 = f_1 $ and $ g_i = f_i - \sum\limits_{j = 1}^{i-1} d_{i,j} f_j $, for $ 2 \leq i \leq m $, then
	\begin{enumerate}
		\item $ ( g_1, \ldots, g_m ) $ is $ 0 $-normalized (\wrt $ ( u, y ) $),
		\item $ n_{(u)} (g_i) = n_{(u)} (f_i) $, for $ i \in \{ 1, \ldots, m \} $,
		and $ \poly guy \subset \poly fuy $. 
	\end{enumerate} 
	Furthermore, if $ ( f) $ is a $ ( u ) $-standard basis, so is $ ( g ) $.
\end{Lem}

Let us point out that if $ (f) $ is a $ (u )$-standard basis, then the assumption on $ (\ini_0(f_1), \ldots, \ini_0(f_m) ) $ holds.
Hence, given any $ ( u ) $-standard basis, we can pass to a $ 0 $-normalized one without passing to the completion.

\begin{Lem}
	\label{Lem:nzld_(u)stdbasis_unique_exp_and_numb_of_gen}
	Let $ J \subset R $ be a non-zero ideal.
	Let $ ( f ) = ( f_1, \ldots, f_m ) $ and $ ( g ) = ( g_1, \ldots, g_\ell ) $ be two $ 0 $-normalized $ ( u ) $-standard bases for $ J $. 
	Then, we have 
	\[ 
	\ell = m 
	\ \ \ 
	\mbox{ and }
	\ \ \  
	n_{ ( u ) } ( f_i ) = n_{ ( u ) } ( g_i ) 
	\ 
	\mbox{ for every } i \in \{ 1, \ldots, m \} .
	\]
	Furthermore, if we assume 
	$ \exp(f_i) \preceq \exp(f_{i+1}) $ and $ \exp(g_j) \preceq \exp(g_{j+1}) $, for $ 1 \leq i < m $ and $  1 \leq j < \ell $.
	Then, we have 
	\[ 
	\exp ( f_i ) = \exp ( g_i ), 
	\ 
	\mbox{ for every } i \in \{ 1, \ldots, m \} .
	\]
\end{Lem}

\begin{Rk} 
Without loss of generality, we may assume that the condition 
	$ \exp(f_i) \preceq \exp(f_{i+1}) $, for $ 1 \leq i < m $, 
	holds true for a given $ (u)$-standard basis $ (f_1, \ldots, f_m) $ (after a possible reordering of the elements).
	Without explicitly mentioning it, we assume from now on that 
	the leading exponents of a given $ ( u)$-standard basis are ordered increasingly \wrt $ \preceq $.
\end{Rk} 

\begin{proof}[Proof of Lemma~\ref{Lem:nzld_(u)stdbasis_unique_exp_and_numb_of_gen}]
The first assertion follows from the fact that, for any $(i,j)$, $1\leq i \leq j\leq m$ such that $ n_{(u)} (f_{i-1})<n_{(u)} (f_{i})=n_{(u)} (f_{j})<n_{(u)} (f_{j+1})$, then $ (\ini_0 ( f_i), \ldots, \ini_0 (f_j))$ is a basis of the $k$-vector space $\ini_{M'}(J')_s $modulo $\ini_{M'}(J')_{s-1}$ with $s:=n_{(u)} (f_{i})$, $ R' = R / \langle u \rangle $, $M'=M / \langle u \rangle $ and $ J' = J \cdot R' $.

	Let $ L_f : \IR^e \to \IR $, $ L_f ( v ) = \sum\limits_{i=1}^e \lambda_i v_i $ with $ \lambda_i \in \IR_+ $, be a positive linear form such that $ (y, L_f ) $ is a reference datum for $ ( f ) $ 
	and let $ L_g : \IR^e \to \IR $, $ L_g ( v ) = \sum\limits_{i=1}^e \mu_i v_i $ with $ \mu_i \in \IR_+ $, one such that $ ( y, L_g ) $ is a reference datum for $ ( g ) $, where $ v  = ( v_1, \ldots, v_e ) \in \IR^e $.
	In particular, $ L_f $ is effective for $ ( f ,u, y ) $ and $ L_g $ is effective for $ ( g ,u, y ) $. 
	
	Set $ \rho_i := \max \{ \lambda_i, \mu_i \} $, for $ 1 \leq i \leq e $, and define $ L : \IR^e \to \IR $ by $ L ( v ) = \sum\limits_{i=1}^e \rho_i  v_i $.
	Then $ L $ is effective for both  $ ( f, u, y ) $ and $ ( g, u, y ) $, and
	Theorem~\ref{Thm:CJSreferencedatum} implies that $ ( y, L ) $ is a reference datum for $ ( f ) $ as well as for $ ( g ) $.
	Thus,
	$$
	\langle \ini_0 ( f_1), \ldots, \ini_0 (f_m) \rangle =  \ini_L ( J ) =\langle \ini_0 ( g_1), \ldots, \ini_0 (g_\ell) \rangle 
	$$
	
	Suppose $ \exp ( f_1 ) \neq \exp ( g_1 ) $; 
	without loss of generality, we then have 
	$ \exp ( f_1 ) \prec \exp ( g_1 ) $.
	This contradicts 
	$ \ini_0 ( f_1) \in \langle \ini_0 ( g_1), \ldots, \ini_0 (g_\ell) \rangle $.
	Hence, we have $ \exp ( f_1 ) = \exp ( g_1 ) $.
	
	Assume $ \exp ( f_i ) = \exp ( g_i ) $, for all $ i < j $, and $ \exp ( f_j ) \neq \exp ( g_j ) $, for some $ j \geq 2 $; 
	without loss of generality $ \exp ( f_j ) \prec  \exp ( g_j ) $.
	Since we do have $ \ini_0 ( f_j) \in \langle \ini_0 ( g_1), \ldots, \ini_0 (g_\ell) \rangle $
	and $ \ini_0(f_j) \in k[Y] $,
	there is $ \mu_1, \ldots, \mu_\ell \in k [Y] $ such that 
	\[
	\ini_0 ( f_j) = \mu_1 \cdot \ini_0 ( g_1) + \ldots +  \mu_\ell \cdot \ini_0 ( g_\ell).
	\]
	As $ \exp ( f_j ) \prec  \exp ( g_j ) \prec  \exp ( g_{j+1} ) \prec  \ldots \prec  \exp ( g_\ell ) $ there must exist $ i < j $ with $ \mu_i \neq 0 $. 
	This implies that there appear $ g_i $ with $ \exp ( g_i ) = \exp ( f_i) $, for some $ i < j $.
	But this contradicts the property that $ ( \ini_0 (f) ) $ is normalized. 
	
	Assume that the $ 0 $-normalized $ ( u ) $-standard bases are of different length; without loss of generality, $ m > \ell $.
	Then $ 0 \neq \ini_0 (f_{ \ell + 1 }) \in \langle \ini_0 ( g_1), \ldots, \ini_0 (g_\ell) \rangle $.
	Since $ \exp ( g_i ) = \exp ( f_i ) $, for all $ 1 \leq i \leq \ell $, we obtain a contradiction to the hypothesis that $ ( \ini_0 (f) ) $ is normalized, as before.  
\end{proof}

Recall the following notion of \cite{HiroCharPoly} (2.3) and (2.6) (p.~260).

\begin{Def}
	\label{Def:I(Delta,b)} 
	Let $ R $ be a regular local ring. 
	We fix a regular system of parameters $ (u,y) = ( u_1, \ldots, u_e; y_1, \ldots, y_r ) $. 
	Let $ \Delta \subset \IR^e_{\geq 0} $ be a closed convex subset such that $ \Delta + \IR^e_{\geq 0} = \Delta $ and $ b \in \IR_+ $. 
	Set $ b\Delta := \{ bv \in \IR^e_{\geq 0 } \mid v \in \Delta \} $ and $ W(b) := \{ B \in \IR^r_{\geq 0 } \mid |B| \geq b \} $.
	\begin{enumerate}
		\item 	
		The symbol $ \{ \Delta, b \} $ denotes the smallest convex subset of $ \IR^{e+r}_{\geq 0} $ 
		containing 
		$ 
		(b\Delta) \times \IR^r_{\geq 0} 
		$ 
		and 
		$
		\IR^e_{\geq 0 } \times W(b).
		$ 
		
		\item 
		The symbol $ I (\Delta,b) := I (\Delta,b)_{(u,y)}:= I (\{\Delta,b\} )_{(u,y)} $ 
		denotes the monomial ideal in $ R $ generated by 
		$	
		\{ u^A y^B \mid (A,B) \in \{\Delta,b\} {\, \cap \, \IZ^{e+r}_{\geq 0}} \} .
		$
	\end{enumerate}	
\end{Def}

\begin{Rk}\label{Rk:IDelta} With notations as above, it is easy to verify: 
	\[ g\in I (\Delta,b) \Leftrightarrow 
		v_L ( g )\geq b, \hbox{ for all } L \in \IL_+ ( \IR^e ), \hbox{with } L(v)\geq 1 
			\hbox{ for every } v\in \Delta.
	\]
	
\end{Rk}

\begin{Prop}[\cite{HiroCharPoly} Lemma (3.15)]
\label{Prop:Nlz}
	Let $ ( f ) = ( f_1, \ldots, f_m ) $ be a $ 0 $-normalized system of generators for $ J \subset R $
	and let $ ( u, y ) $ be a regular system of parameters with $(u)$ a regular $ (R / J)$-sequence.
	Set $ \nu_i := n_{(u)}(f_i) $, for $ 1 \leq i \leq m $.
	Let $ v \in \IR^e_{\geq 0 } $ be a vertex of $ \poly fuy $.
	
	There exist $ x_{i,j} \in \langle u \rangle \cap I (v + \IR_{\geq 0 }^e , \nu_i - \nu_j )_{(u,y)} \subset R $ such that if we set $ g_1 = f_1 $ and $ g_i = f_i - \sum\limits_{j = 1}^{i-1} x_{i,j} f_j $, for $2 \leq i \leq m $, then we have 
	\begin{enumerate}
		\item	$ \poly guy \subset \poly fuy $,
		\item	$ ( g_1, \ldots, g_m ) $ is normalized at $ v $ if $ v $ is a vertex of $ \poly guy $, and 
		\item	$ \Eck ( \poly fuy ) \setminus \{ v \} \subset \Eck ( \poly guy ) $.
	\end{enumerate} 
\end{Prop}

\begin{Rk}\label{Rk:ini_0} 
	As $ x_{i,j} \in \langle u \rangle $, we have $\ini_0(g_i)=\ini_0(f_i)$ for $1 \leq i \leq m $, Hironaka's process preserves the extra assumption $ \exp(f_i) \preceq \exp(f_{i+1}) $, for $ 1 \leq i < m $.
\end{Rk}

\noindent 
In Example \ref{Ex:Preparation} below we show how normalization can eliminate vertices.

\begin{Lem}
	\label{Lem:Norm_implies_Mini_VC}
	If $ ( f  ) = ( f_1, \ldots, f_m ) $ and $(g)=( g_1, \ldots, g_{m'} ) $ are two \emph{vertex-normalized} $ ( u ) $-standard basis of $ J $ with reference data respectively $ ( y, L_1 ) $ and $ ( y, L_2 ) $, for some $ L_i\in \IL_+ $, $i=1,2$, then $m=m'$ and
	\begin{equation}
	\label{eq:Norm_implies_Mini_VC1}
	\poly{ f }{ u }{ y } = \poly{ g }{ u }{ y }.
	\end{equation}
\end{Lem}

\begin{proof}
	By Lemma \ref{Lem:nzld_(u)stdbasis_unique_exp_and_numb_of_gen}, both have the same number of elements, $ ( f ) = ( f_1, \ldots, f_m ) $ and $ ( g ) =  ( g_1, \ldots, g_m ) $.
	Furthermore $ n_{(u)} ( f_i ) = n_{(u)} ( g_i ) =: \nu_i $, for every $ i \in \{ 1, \ldots, m \} $. 
	
	Suppose equality \eqref{eq:Norm_implies_Mini_VC1} is wrong. 
	Then the convex hull $\Delta \subset \IR^e_{\geq 0 }  $ of 
	$ \poly{ f }{ u }{ y } \cup \poly{ g}{ u }{ y }$ 
	would be strictly greater than one of the associated polyhedra: 
	say $\poly{ g}{ u }{ y } \subsetneq \Delta$. 
	Hence, there would exist a vertex $v$ of $\Delta$ with $ v\notin \poly{ g}{ u }{ y } $ and a positive linear form $ \Lambda $ such that 
	$
	v = \Delta\cap \{ w\in \IR^e \mid \Lambda(w)=1   \}.
	$
	Therefore, we have 
	\[ 
	\begin{array}{c} 
	v=\poly{ f }{ u }{ y } \cap \{ w\in \IR^e \mid \Lambda(w)=1   \}
	\ \ \mbox{ and}
	\\[5pt]
	\poly{ g}{ u }{ y } \subset \{ w\in \IR^e \mid \Lambda(w)>1   \}.
	\end{array}
	\] 
	By Theorem \ref{Thm:CJSreferencedatum}, $ ( y, \Lambda ) $ is a reference datum for $(g)$.
	
	Let $i \in \{ 1, \ldots, m \} $ be minimal such that 
	$ F_i:= \ini_{\Lambda}(f_i)\neq \ini_0(f_i)$.
	In particular, $ \ini_{\Lambda}(f_i) \notin K[Y] $. 
	Since $ (y; \Lambda) $ is a reference datum for the $ ( u) $-standard basis $ (g) $,
	Definition~\ref{Def:u-standardbasis}(1) provides that
	$ F_i = \sum\limits_{\nu_j\leq \nu_i} \lambda_j \ini_{\Lambda}(g_j)$,
	for some $\lambda_j \in  \gr_{\Lambda}(R) \cong k[Y,U]$, $\lambda_j = 0$ or quasi-homogeneous for $ {\Lambda}$ of degree $ \nu_i- \nu_j$. 
	As $ \ini_{\Lambda}(g_j)=  \ini_0(g_j)\in K[Y] $ and 
	$\ini_{\Lambda}(f_i)\notin K[Y] $, 
	we obtain that $F_i-\ini_0(f_i) = \sum\limits_{\nu_j< \nu_i} \lambda_j \ini_{\Lambda}(g_j)\neq 0 $.
	This contradicts the assumption that $F_i$ is $v$-normalized.
\end{proof}

Let $ ( f ) = ( f_1, \ldots, f_m ) $  be any $ ( u ) $-standard basis for $ J $.
By Hironaka's procedure of normalization,  
we obtain a vertex-normalized $ ( u ) $-standard basis $ ( f' ) $ (possibly in $ \hR $) and
$ \poly{ f' }{ u }{ y } \subseteq \poly{ f }{ u }{ y } $
(Lemma~\ref{Lem:0Nlz} and Proposition~\ref{Prop:Nlz}).
Combining this with Lemma~\ref{Lem:Norm_implies_Mini_VC}, we get

\begin{Cor}
	\label{Cor:Norm_implies_Mini}
	If $ ( f  ) = ( f_1, \ldots, f_m ) $ is a \emph{vertex-normalized} $ ( u ) $-standard basis of $ J $ with reference datum $ ( y, L ) $, for some $ L \in \IL_+ $, then
	$$
	\poly{ f }{ u }{ y } = \poly{ J }{ u }{ y }.
	$$ 
\end{Cor}

The previous result shows that for an arbitrary $ ( u ) $-standard basis $ ( f ) $ of $ J $ the difference between $ \poly fuy $ and $ \poly Juy $ reflects how far $ ( f ) $ is away from being vertex-normalized.

\smallskip

After normalizing the generators one has to check whether vertices of the associated polyhedron can be eliminated by changes in the elements $ ( y ) $.

\begin{Def}
	Let $ ( f ) = ( f_1, \ldots, f_m ) $ be generators of an ideal $ J \subset R $.
	Let $ ( u ,y ) $ be a regular system of parameters for $ R $ such that $ ( y ) $ determines the directrix of $ J' = J \cdot R' $, where $ R' = R / \langle u \rangle $.
	A vertex $ v \in \poly fuy $ is called \emph{solvable} if 
	$ v \in \IZ^e_{\geq 0} $ and 
	there exist $ \lambda_j \in R^\times \cup \{ 0 \} $, $ j \in \{ 1, \ldots, r \} $, such that we have, for the system $ ( z ) = (z_1, \ldots, z_r ) $, given by $  z_j := y_j - \lambda_j u^v $,
	\[
		v \notin \poly fuz .
	\] 
	The elements $ (\lambda_1 u^v, \ldots, \lambda_r u^v) $ are then called a {\em solution for $ v $} or 
	simply {\em $ v $-solution}.
\end{Def}

In \cite{HiroCharPoly}, Corollary (4.4.3) it is shown that,
as $ ( y ) $ determines the directrix of $ J' $,  
 if a vertex is solvable, then the residues $ \lambda_j \mod M \in k $ are unique.

Note that $ ( z ) $ has still the property that it yields the directrix of $ J' $.
Moreover, the other vertices of the polyhedron do not change under this translation.
More precisely,

\begin{Prop}[\cite{HiroCharPoly} Lemma (3.10)]
\label{Prop:SolVert}
	Let $ ( f ) $, $ J \subset R $ and $ ( u ,y ) $ be as in the previous definition.
	Let $ v \in \poly fuy $ be a solvable vertex 
	with $ v $-solution $ (\lambda_1 u^v, \ldots, \lambda_r u^v ) $.
	Define $ ( z ) = (z_1, \ldots, z_r) $ by $ z_j = y_j - \lambda_j u^v $, for $ 1 \leq j \leq r $.
	We have
	\begin{enumerate}
		\item	$ \poly fuz \subset \poly fuy $,
		\item	$ v \notin \poly fuz $, and 
		\item	$ \Eck ( \poly fuy ) \setminus \{ v \} \subset \Eck ( \poly fuz ) $.
	\end{enumerate}
\end{Prop}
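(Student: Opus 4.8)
The plan is to read off how the polyhedron transforms under the substitution $z_j = y_j + \lambda_j u^v$, equivalently $y_j = z_j - \lambda_j u^v$, purely on the level of exponents. Note first that a solvable vertex $v$ lies in $\IZ^e_{\geq 0}$ and is nonzero: the origin is not a vertex of $\poly fuy$, because every vertex arises from a monomial $u^A y^B$ with $|B| < n_{(u)}(f_i)$, which forces $A \neq 0$. Hence $u^v \in \langle u \rangle$, so $z_j \equiv y_j \bmod \langle u \rangle$; in particular $n_{(u)}(f_i)$ is unchanged for every $i$, $(u,z)$ is again a {\RSP}~of $R$, and $(z)$ still yields the directrix of $J'$. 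It is worth observing that (1) and (3) will hold for \emph{every} choice of $\lambda_j \in R^\times \cup \{0\}$; only (2), which is nothing but the definition of a solvable vertex, uses that $v$ is actually solved by the chosen $(z)$.

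For (1), I would fix an index $i$, set $n := n_{(u)}(f_i)$, take the special $(u,y)$-expansion $f_i = \sum C_{A,B}\, u^A y^B$, and substitute, using the multi-index identity $y^B = \sum_{K \leq B} \binom{B}{K}(-\lambda)^{B-K}\, z^K u^{|B-K|v}$. Each resulting monomial $u^{A+|B-K|v} z^K$ with $|K| < n$ contributes the point $\frac{A + |B-K|v}{n-|K|}$. Writing $\ell := |B| - |K| = |B-K| \geq 0$: when $|B| < n$ this point equals
\[
	\frac{n-|B|}{n-|K|}\cdot\frac{A}{n-|B|} \;+\; \frac{\ell}{n-|K|}\, v ,
\]
a convex combination of $\frac{A}{n-|B|} \in \poly fuy$ and $v \in \poly fuy$; and when $|B| \geq n$ one has $\ell \geq n - |K|$, so the point lies in $v + \IR^e_{\geq 0} \subseteq \poly fuy$. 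In either case it lies in $\poly fuy$. Since the exponents of the special $(u,z)$-expansion of $f_i$ are the minimal elements of the exponent set occurring above, hence a subset of it, this gives $\poly{f_i}{u}{z} \subseteq \poly fuy$ for every $i$, and therefore $\poly fuz \subseteq \poly fuy$.

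For (3), let $w \in \Eck(\poly fuy)$ with $w \neq v$. Since $\poly fuy$ is the smallest $F$-subset containing the finite set of points contributed by the monomials of $f_1, \ldots, f_m$ in their special $(u,y)$-expansions, and a vertex of such an $F$-subset necessarily lies in that finite set, I may write $w = \frac{A}{n-|B|}$ where $n = n_{(u)}(f_i)$ for some $i$ and $u^A y^B$ occurs with a unit coefficient in the special $(u,y)$-expansion of $f_i$, with $|B| < n$. The crucial claim is that $u^A z^B$ still occurs with a unit coefficient in the $(u,z)$-expansion of $f_i$. By the identity above, any further contribution to the coefficient of $u^A z^B$ would come from a monomial $u^{A'} y^{B'}$ of $f_i$ with $B' \gneq B$, nonzero coefficient, and $A' = A - \ell v$ where $\ell := |B'| - |B| \geq 1$. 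If $|B'| < n$, then $(A', B')$ contributes the point $\frac{A - \ell v}{(n-|B|) - \ell} \in \poly fuy$, of which $w$ together with $v$ would be a \emph{proper} convex combination --- impossible, as $w$ is an extreme point of $\poly fuy$ distinct from $v$. If $|B'| \geq n$, then $\ell \geq n - |B|$, forcing $w = \frac{A}{n-|B|} \geq v$ componentwise; but then the positive linear form realizing $w$ as a vertex of $\poly fuy$ would attain its minimal value $1$ also at $v \leq w$, forcing $v = w$, again a contradiction. So the claim holds, whence $w \in \poly{f_i}{u}{z} \subseteq \poly fuz$; and since $\poly fuz \subseteq \poly fuy$ by (1), the positive linear form witnessing $w \in \Eck(\poly fuy)$ also witnesses $w \in \Eck(\poly fuz)$.

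The step I expect to be the main obstacle is (3): one has to combine the reduction ``a vertex of the polyhedron comes from a genuine monomial of the generators'' with the no-cancellation argument, keeping clean track of the two cases $|B'| < n$ and $|B'| \geq n$ and of the passage between the naively substituted expansion of $f_i$ and its special (minimal-base) form --- a monomial $u^A z^B$ with unit coefficient that is not itself a base exponent gets absorbed into a base one lying below it, but its point still belongs to $\poly{f_i}{u}{z}$, which is all one needs. By contrast, the computation behind (1) is routine once the convex-combination identity is on the table.
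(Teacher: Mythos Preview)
The paper does not supply its own proof of this proposition; it is quoted directly from \cite{HiroCharPoly} Lemma~(3.10) without argument. So there is nothing in the paper to compare against beyond the citation.

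Your argument is correct and is essentially the standard one (and, as far as one can tell, the one Hironaka gives): expand $y^B = (z-\lambda u^v)^B$, track exponents, and use that every new projected point is a convex combination of an old projected point and $v$. The treatment of~(1) is clean; the no-cancellation argument for~(3) is the right idea and your case split $|B'|<n$ versus $|B'|\geq n$ handles it completely. Two minor remarks worth tightening if you write this up formally: first, in the last step of~(3) you should state explicitly that the positive linear form $L$ witnessing $w$ as a vertex satisfies $L(x)\geq L(w)$ for all $x\in\poly fuy$ with equality only at $w$ (the paper's Definition~1.1(4) is slightly garbled on this point, but this is the intended meaning); second, your closing remark about the passage from the naive substituted expansion to the special minimal-base expansion is exactly the point that needs care, and your observation that a non-base exponent with unit coefficient is dominated by a base exponent --- hence its projected point still lies in $\poly{f_i}{u}{z}$ --- is the correct resolution.
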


In order to normalize and to solve the vertices in a systematic way one has to equip $ \IR^e $ with a total ordering.
Then one picks the vertex that is minimal \wrt this ordering, normalizes, and tests if it is solvable. 
After that one takes the new minimal vertex that has not been considered yet.

In the procedure it is important to apply alternately normalization and vertex solving.
In the latter we only take care of vertices and not points in the interior of the polyhedron.
But still these points might be interesting after normalization.

\begin{Ex}
	\label{Ex:Preparation}
	Let $ R $ be a regular local ring with regular system of parameters $ (u_1, \ldots, u_e, y_1, y_2) $.
	Assume that $ R $ contains a field of characteristic $ p > 0 $.
	Consider the polynomials $ f_1 = y_1^{ p } $ and $ f_2 = y_2^{ p^2 } + u^{ A' } y_1^{ p } + u^{ p^2 A } $ and
	suppose that $ A \in \IZ_{\geq 0}^e \cap (\frac{A'}{p^2 - p} + \IZ_{\geq 0}^e) $, $ A \neq \frac{A'}{p^2 - p} $.
	The only vertex of the associated polyhedron is given by $ v := \frac{A'}{p^2 - p} $ and one sees easily that $ v $ cannot be solved.
	The normalization provides $ g_2 := f_2 - u^{A'}\hspace{-4pt}\cdot\hspace{-2pt} f_1 = y_2^{ p^2 } + u^{ p^2 A } $.
	Therefore the vertex $ v $ vanishes and the new vertex $ A $ is solvable via $ z_2 := y_2 + u^A $.
\end{Ex}

\begin{Ex}
	\label{Ex:HiroInfinite}
	Let $ R $ be a regular local ring with regular system of parameters $ (u_1, u_2, y ) $.
	Suppose $ R $ contains a field of characteristic $ p > 0 $. 
	Consider the singularity given by 
	\[ 
	f = y^p + y^{p^2} + u_1^{a p} + u_2^b = 0,
	\]
	for $ a, b \in \IZ_+ $ prime to $ p $.
	The initial form at the vertex $ v_0 := (a,0) $ is $ \ini_{v_0} (f) = Y^p + U_1^{ap} = (Y + U_1^a)^p $. 
	Following the above procedure we have to make the translation $ y \mapsto w := y + u_1^a $ and get \[ 
		f = w^p + w^{p^2} - u_1^{a p^2}  + u_2^b.
	\]
	The new vertex $ v_1 := (ap , 0) $ is solvable 
	and clearly this is \emph{not} a finite process.
	On the other hand, if we consider $ z := y + y^p + u_1^a $, then $ f = z^p + u_2^b $ and the associated polyhedron coincides with the characteristic polyhedron.
\end{Ex}

For another example, which is valid in a more general setting, we refer the reader to Example 3.1 in \cite{CPcompl}, p.~165.

\begin{Hypothesis}\label{hyp}
From now on, we always assume the following:
\begin{enumerate}
		\item	 $ (u, y ) $ is a  regular system of parameters of $ R $,
		\item	$ u $ is a regular $ (R/ J) $-sequence, and 
		\item	there is no proper $ k $-submodule $ T \subset \gr_{M'}^1(R') $ 
		such that
\[
( \, \ini_{M'} (J')  \cap k [T ] \,) \,  gr_{M'}(R') = \ini_{M'} (J'),
\]
 with $ R' = R / \langle u \rangle $ and $ J' = J \cdot R' $, $J\subset R$ a non zero ideal.
	\end{enumerate}
Even if some statements below are true with less restrictive hypotheses.
\end{Hypothesis}

\begin{Def}[\cite{CJS} Definition 7.15(1),(4)] 
	Let $ J \subset R $ be a non-zero ideal and $ ( u,y ) = (u_1, \ldots, u_e ,y_1, \ldots, y_r ) $ be a system of elements  a regular system of parameters of $ R $ verifying Hypothesis~\ref{hyp}.
	\begin{enumerate}
		\item 
		Let $ v \in \poly fuy $ be a vertex.
		We say $ (f;y) $ is {\em $ v $-prepared \wrt $ (u) $} if
		$ (f) $ is normalized at $ v $ and $ v $ is not solvable. 
		
		\item 
		If $ (f;y) $ is $ v $-prepared \wrt $ ( u) $ at every vertex of $ \poly fuy $, then 
		$ (f) $ is called {\em well prepared \wrt $ (u) $}. 
	\end{enumerate}
\end{Def}

\begin{Thm}[\cite{HiroCharPoly} Theorem (3.17)]
	\label{Thm:3.17}
	Let $ R $ be a regular local ring and let  
	$ ( u,y ) = (u_1, \ldots, u_e; y_1, \ldots, y_r ) $ and $J \subset R $ verify Hypothesis~\ref{hyp}. 	
	Let $ (f) = (f_1, \ldots, f_m) $ be a system generating $J$ such that $ f_i \notin \langle u \rangle $, for $ 1 \leq i \leq m $.
	Set $ \nu_i := n_{(u)}(f_i) $.
	Let us assume that $ R $ is {\em complete} and that 
	$ \ini_0 (f)_{(u,y)} $ is a standard basis of the ideal which it generates in $ \gr_M(R) $. 
	 
	There exist $ x_{i,j} \in  I (\poly fuy; \nu_i - \nu_j )_{(u,y)} $, for $ 1 \leq j < i \leq m $,
	and $ d_\alpha \in I(\poly fuy; 1)_{(u,y)} \cap \langle u \rangle \subset R $, for $ 1 \leq \alpha \leq r $, such that
	$(g;z) $ is totally prepared \wrt $ (u) $,
	where $ (g) = (g_1, \ldots, g_m) $
	with $ g_i = f_i - \sum\limits_{j=1}^{i-1} x_{i,j} f_j $
	and $ (z) = (z_1, \ldots, z_r) $ with $ z_\alpha = y_\alpha - d_\alpha $.

	Moreover, if $ (f) $ is $ 0 $-normalized (\wrt $ (u,y) $), then we can choose those $ x_{i,j} $ in the ideal $ I (\poly fuy; \nu_i- \nu_j)_{(u,y)} \cap \langle u \rangle \subset R $, {cf. Remark~\ref{Rk:ini_0}}. 
\end{Thm}

\noindent
The notion of $ (f;y) $ to be totally prepared 
which is slightly more restrictive than being well prepared 
(see \cite{CJS} Definition 7.15 (5)). 
Nonetheless, the polyhedron does not change if we pass from a well prepared to a totally prepared $ ( u ) $-standard basis, so we can avoid introducing more technical notions.

In the original formulation, there is also another additional remark on the choice of $ x_{i,j} $ and $ d_\alpha $ if only a finite set of vertices of $ \poly fuy $ is considered.
Since this is not needed in our context, we skipped it.  


\begin{Thm}[\cite{HiroCharPoly} Theorem (4.8)]
	\label{Thm:4.8} 
	Let $ R $ be a regular local ring and let  
	$ ( u,y ) = (u_1, \ldots, u_e; y_1, \ldots, y_r ) $ and $J \subset R $ verify Hypothesis~\ref{hyp}. 	
 	Further, let $ (f) = (f_1, \ldots, f_m) $ be a $(u)$-standard basis of $J$.

	If $ v \in \poly fuy $ is any vertex such that $ (f;y) $ is $ v $-prepared \wrt $ ( u ) $,
	then $ v $ is also a vertex of $ \cpoly Ju $. 
	In particular, if $ (f;y) $ is well prepared \wrt $ (u ) $, then $ \poly fuy = \cpoly Ju $. 
\end{Thm}

%
%
%
%
%
%
%
%
%
%

\medskip 

\section{Non-empty Characteristic Polyhedron}
\label{sec:non-empty}

We first show how to reduce the problem to the case of an empty characteristic polyhedron. 
Here, the assumption on $ R $ being a G-ring appears for the first time. 
Thus, let us recall the following result of \cite{CPcompl}:

\begin{Lem}[\cite{CPcompl} Lemma 3.1]
	\label{lem:G=>excellent}
	Let $ ( R,  M, k ) $ be a regular local ring.  
	Then 
	\begin{center}
		$ R $ is a G-ring \ \ $ \iff $ \ \ $ R $ is quasi-excellent
		\  \ $ \iff $ \ \ $ R $ is excellent.
	\end{center}
	If $ R $ is a regular local G-ring, then we have:
	\begin{enumerate}
		\item 	Let $ f \in M $, $ f \neq 0 $, be such that $ R / \langle f \rangle $ is a domain. 
		Then $ \widehat{R} \,/\,\langle f \rangle\hspace{-2pt}\cdot\hspace{-3pt}\widehat R $ is reduced.
		\item Any quotient of $ R $, localization $ R_P $ at a prime ideal $ P \subset R $, or localization of a polynomial ring $ R[T] $ at a maximal ideal is again a G-ring.
	\end{enumerate}
\end{Lem}

We assume that the following claim holds true:

\begin{Hypothesis}[Empty Case]
	\label{Hyp:EmptyCase}
	Let $ \cR $ be a regular local G-ring and $ I \subset \cR $ a non-zero ideal.
	Let $ ( t , x ) = ( t_1, \ldots, t_d; x_1, \ldots, x_s ) $ be a regular system of parameters for $  \cR $  such that Hypothesis~\ref{hyp} is true
	%
	for
	$ ( \cR, (t, x ), I ) $. 
	Let $ (P) = (P_1, \ldots, P_m ) $ be a $ 0 $-normalized $ ( t ) $-standard basis for $ I $
	and set $ \nu_i := n_{(u)} (P_i) $, for $ 1 \leq i \leq m $. 
	
	We assume  $ \cpoly It = \varnothing $.	%
	By Hironaka's Theorem \ref{Thm:Hironaka}, there exist elements
	$ ( \hx ) = ( \hx_1, \ldots, \hx_s ) $ such that 
	$ ( t , \hx ) $ is a regular system of parameters for $ \widehat{ \cR} $, 
	$ (\hx ) $ determines the directrix of $ I' := I \cdot \cR/\langle t \rangle  $, 
	and $ \poly It\hx = \varnothing $. 
\end{Hypothesis}

	\begin{Claim}\label{claim:polyvide}
	Under Hypothesis~\ref{Hyp:EmptyCase}, there exist 
	a $ 0 $-normalized $ ( t ) $-standard basis $ ( Q ) = (Q_1, \ldots, Q_m ) $ of $ I $ in $ \cR $ and
	elements $ ( z ) = ( z_1, \ldots, z_s ) $ in $  \cR $ such that 
	\begin{enumerate}
		\item 	
		$ ( t, z) $ is a regular system of parameters for $  \cR $,
		
		\item 
		$ ( z ) $ yields the directrix of $ I' $,
		
		\item 
		$ \langle z \rangle \cdot \widehat{ \cR} = \langle \hx \rangle $, 
		
		\item $ z_j \in I(  \poly Ptx;1)  $, for $ 1\leq j \leq s $,
		
		\item 
		$ Q_1 = P_1 $ and
		$ Q_i = P_i + \sum\limits_{a=1}^{i-1}  H_{i,a} P_a  $,
		for $ H_{i,a} \in I( \poly Ptx ; \nu_{i}-\nu_a) $, for all $ i \in \{ 2, \ldots, m \} $,
		and
		
		\item 
		$ 	\poly Qtz = \poly Itz = \cpoly It = \varnothing . $
	\end{enumerate} 
\end{Claim}

\noindent 
(We switched the notations slightly since, most of the time, we apply the hypothesis
 and the claim in case of graded rings associated to $ J \subset R $).

In the case of an empty polyhedron, a $ 0 $-normalized set of generators for $ (g) $ such that $ \poly gtz = \varnothing $ is also vertex-normalized since the conditions in the definition are empty. 

\begin{Rk}\label{Rk:hyp*}
Unfortunately, this claim can be proven only with the supplementary hypothesis $ (*) $ or $ (\Pol) $ or $ R $ Henselian, it is an open and challenging question to prove it with less restrictive conditions.

	The setting in the  proposition below is more general than in 
	Theorems~\ref{MainThm:Hensel}, or~\ref{MainThm:*}, or~\ref{MainThm:Pol} 
	since we do neither require $ R $ to be Henselian nor $ (*) $ nor $ (\Pol) $ to hold in Hypothesis~\ref{Hyp:EmptyCase} or in Claim~\ref{claim:polyvide}: 
	both authors hope that hypotheses will be skipped in the future.
	
	In Proposition~\ref{Prop:*_and_Pol_stable}, we prove the stability of $ (*) $ and $ (\Pol) $
	and discuss the Henselian case
	in order to conclude the proof of the main theorems modulo the Claim~\ref{claim:polyvide}.
\end{Rk}

\begin{Thm}
	\label{Thm:ReducEmpty}
	Suppose Claim~\ref{claim:polyvide} is true.
	Let $ R $ be a regular local G-ring, $ J \subset R $ a non-zero ideal and $ ( u , y ) = ( u_1, \ldots, u_e; y_1, \ldots, y_r ) $ a regular system of parameters~of $ R $  such that
	 such that Hypothesis~\ref{hyp} is true for $ ( R, (u, y ), J ) $.
	Assume that $ \cpoly Ju \neq \varnothing $.
	
	There exist a vertex-normalized $ ( u ) $-standard basis $ ( g ) = (g_1, \ldots, g_m ) $ of $ J $ and elements $ ( z ) = ( z_1, \ldots, z_r ) $ in $ R $ such that $ ( u, z) $ is a regular system of parameters for $ R $, $ ( z ) $ yields the directrix of $ J' = J \cdot R / \langle u \rangle $, and
	\[   
		\poly guz = \poly Juz =  \cpoly Ju .
	\]
\end{Thm}

\smallskip

For the proof, we use the analogous measure for the difference between 
$  \poly fuy  $ and $ \cpoly Ju $ as it is used in the proof of Theorem 3.3 in \cite{CPcompl}.

\begin{Def}
	\label{Def:measure}
	\begin{enumerate}
		\item 	Let $ L \in \IL_0 ( \IR^e ) $ be any semi-positive linear forms on $ \IR^e $.11
		For a non-empty subset $ \Delta \subset \IR^e_{\geq 0 } $ we set
		\[ 
		\delta_L ( \Delta ) := \min \{ L( v ) \mid v \in \Delta  \}	< \infty .
		\] 
		\item	Let $ \varnothing \neq \Delta^0, \Delta^+ \subset \IR_{\geq 0 }^e $ be two non-empty rational polyhedra, where one is contained in the other, $ \Delta^+ \supset \Delta^0 $.
		Let $ L_1, \ldots, L_n \in \IL_0 ( \IR^e ) $ be rational semi-positive linear forms defining the faces of $ \Delta^0 $,
		$
		\Delta^0 = \bigcap\limits_{ j = 1 }^n \Delta ( L_j ).
		$
		We set, for every $ j \in \{ 1, \ldots, n \} $,
		\[ 
		\ell_j ( \Delta^+ ) := \delta_{L_j} ( \Delta^+ ).
		\] 
		By construction, $ 0 \leq \ell_j (  \Delta^+ ) \leq 1 $. 
		If  $ \ell_j (  \Delta^+ ) = 1 $, then the face of $ \Delta^0 $ defined by $ L_j $ is contained in the face of $ \Delta $ defined by $ L_j $.
		The measure for the total difference is the non-negative rational number
		\[
		\Lambda_{\Delta^0} ( \Delta^+ ) := \sum_{ j = 1 }^n \Big( 1 - \ell_j ( \Delta^+ ) \Big) \in \frac{1}{ \beta ! \, \alpha!} \, \IZ_{\geq 0},
		\] 
		where $ \beta $ denotes the biggest denominator appearing in the coordinates of the (finitely many) vertices of $ \Delta^+ $ and $ \alpha $ is the biggest denominator appearing in the coefficients of $ L_1, \ldots, L_n $. 
		\item 
		Suppose $ \Delta^0 = \cpoly Ju \neq \varnothing $ and $ \Delta^+ = \poly fuy $, for some $ 0 $-normalized $ ( u ) $-standard basis $ (f) = (f_1, \ldots, f_m ) $.
		Then, we only write
		\[ 
		\hspace{25pt}
		\ell_j ( f, u, y ) := \ell_j (  \poly fuy ),
		\ \  \
		\Lambda ( f, u, y ) := \Lambda_{\cpoly Ju} (  \poly fuy ) 
		\]
	\end{enumerate}
\end{Def}

As in \cite{CPcompl}, we follow Hironaka \cite{HiroCharPoly} (2.6) and use in our proof of Theorem~\ref{Thm:ReducEmpty}.
the ideal of initial forms associated to a linear form as in the previous definition. 
The latter arises from the following valuation that is very useful for the study of the polyhedron
and which was already used in \cite{CPcompl}.

\smallskip

We fix:
Let $ R $ be a regular local ring and $ J \subset R $ be a non-zero ideal.
	Let $ ( u, y) = (u_1, \ldots, u_e ; y_1, \ldots, y_r ) $ be a regular system of parameters for $ R $
	such that $ (y) $ determines the directrix of $ J' $.
	Let $ (f) = (f_1, \ldots, f_m) $ be a $ ( u ) $-standard basis for $ J $.
	Hence, $ \poly fuy \subset \IR^e_{\geq 0} $ is defined (Definition~\ref{Def:asso_poly}).  
	Further, let $ L : \IR^e \to \IR $ be any non-zero rational semi-positive linear form, say defined by 
	$ L(v) = a_1 v_1 + \ldots + a_e v_e $, for $ v = (v_1, \ldots, v_e) \in \IR^e $ and $ a_1,\ldots, a_e \in \IQ_{\geq 0} $.
	We set $ \ell (f,u,y) := \delta_L(\poly fuy) $ 
	
\begin{Def}
	\label{Def:val_Luyf}
	We introduce the monomial ideals
	\[
		I_\lambda := \langle u^A y^B \mid L(A) + \ell ( f, u , y ) \cdot |B| \geq \lambda \rangle \subset R,
		\ \ \
		\mbox{ for } \lambda \geq 0. 
	\]
	This provides a monomial valuation $ \nu := \nu_{ L, u, y, f }  $ on $ R $:
	for $ g \in R $, $ g \neq 0 $, we define
	\[
		\nu ( g ) := {\sup} \{ \lambda \in \IR \mid g \in I_\lambda \}
	\]
	and $ \nu(0) := \infty $.
	The {\em graded ring associated to $ \nu $} is defined as
	\[
	\gr_\nu (R) := 	\gr_{\nu_{L,u,y,f} (R)} := 
	\bigoplus_{ \lambda \in \IR_{\geq 0 } }
	\cP_\lambda / \cP_\lambda^+, 
	\]
	where $ \cP_\lambda := \{ g \in R \mid  \nu(g) \geq \lambda \} \subset R  $ and 
	$ \cP_\lambda^+ := \{ g \in R \mid  \nu(g)  > \lambda \} \subset R $.

	For $ g \in R, g \neq 0 $, the {\em initial form of $ g $ \wrt $ \nu $}, denoted by 
	\[  
		\ini_{ \nu } (g) = \ini_{ \nu_{L,u,y,f} } (g) 
		\in \cP_{\nu(g)} / \cP_{\nu(g)}^+  \subset \gr_{ \nu } ( R), 
	\]
	is defined as the image of $ g $ under the canonical projection $ \cP_{\nu(g)} \to  \cP_{\nu(g)} / \cP_{\nu(g)}^+ $.
	We set $ \ini_{ \nu } (0) := \ini_{ \nu_{L,u,y,f} } (0)  := 0  $
	and $ \gr_\nu (R)_\lambda := \cP_{\lambda} / \cP_{\lambda}^+ $, for $ \lambda \geq 0 $.
\end{Def}

If we have a finite expansion $ g = \sum\limits_{(A,B)} C_{A,B} u^A y^B $, for $ g \in R \setminus \{ 0 \} $, then
\[  
\nu_{ {L,u,y,f} } (g) = \inf \{ L(A) + \ell(f,u,y) |B| \mid C_{A,B} \neq 0 \}. 
\]

\smallskip

\noindent 
{\bf Warning:} Do not confuse $ \ini_{\nu} ( g) $ with the initial form at a vertex, $ \ini_v ( g ) $, 
Definition \ref{Def:nlzd_in_v}(2).
Also, let us point out that $ \nu_{L,u,y,f}(g) $ is different from 
$ v_{L} (g)_{(u,y)} = \inf \{ L(A) + |B| \mid C_{A,B} \neq 0 \} $ (Definition~\ref{Def:with_L}(2)).
Indeed, if $ \ell(f,u,y) = 0 $, then $  \nu_{L,u,y,f}(y_j) = 0 $, while $ v_L(y_j)_{(u,y)} = 1 $, for $ 1 \leq j  \leq r $.
On the other hand, if $ \ell := \ell(f;u;y) \neq 0 $, we may introduce the semi-positive linear form $ \widetilde L : \IR^e \to \IR $ by $ \widetilde L (v) := \frac{1}{\ell} L (v) $. 
Then $ \widetilde\ell(f;u;y) := \delta_{\widetilde L}(\poly fuy) =1 $ and $ v_{L} (g)_{(u,y)} = \nu_{\widetilde L,u,y,f}(g) = \frac{1}{\ell} \cdot \nu_{L,u,y,f}(g) $, for $ g \in R $.

\medskip

For a given semi-positive linear form $ L $ on $ \IR^e $, as above,
let 
\[ 
\begin{array}{lcl} 
\cI &:=& \{ i \in \{ 1, \ldots, e \} \mid  a_{i} \neq 0 \} ,
\\
\cI' &:=& \{ i \in \{ 1, \ldots, e \} \mid  a_{i} = 0 \}  = \{ 1, \ldots , e \} \setminus \cI.
\end{array} 
\]
Further, let us introduce the notation
\[
\begin{array}{llllll}
U_i := \ini_{ \nu }(u_i),
& 
\mbox{for } i \in \cI,
& \ \ &
Y_j := \ini_{ \nu }(y_j),
& 
\mbox{for } 
1 \leq j \leq r.
\end{array} 
\]
Note that we have $ \ini_\nu (u_i) \in \gr_\nu (R)_0 $, for $ i \in \cI' $.
Moreover, if $ \ell(f,u,y) = 0 $ then we also have $ Y_j = \ini_\nu (y_j)\in \gr_\nu (R)_0 $, for $ 1 \leq j \leq r $.

\begin{Prop}[\cite{CPcompl} Proposition 2.3]
	\label{Prop:gr_nu_luyf}
	The graded ring $ \gr_{\nu_{L,u,y,f}}(R) $ of $ R $ \wrt $ \nu_{L,u,y,f} $ is given by:
	\begin{enumerate}
		\item 
		If $ \ell(f,u,y) \neq  0 $, then
		$ 
			\gr_{\nu_{L,u,y,f}} (R) = \dfrac{R}{\langle \{u_i\}_{i\in \cI}, y\rangle} [\{ U_i \}_{i \in \cI}, Y_1, \ldots, Y_r];
		$ 
		
		\medskip 
		
		\item 
		If $ \ell(f,u,y) = 0 $, then
		$ 
		\gr_{\nu_{L,u,y,f}} (R) = \dfrac{R}{\langle \{u_i\}_{i\in \cI} \rangle} [\{ U_i \}_{i \in \cI} ];
		$ 
	\end{enumerate} 
	In particular, we have 
	$ \gr_{\nu_{L,u,y,f}}(R) = k[U_1, \ldots, U_e, Y_1, \ldots, Y_r] $ whenever $ L $ is positive,
	where $ k $ denotes the residue field of $ R $.
\end{Prop}

With this preparation we can give the

\begin{proof}[Proof of Theorem~\ref{Thm:ReducEmpty} (modulo Claim~\ref{claim:polyvide})]
	We follow the proof of \cite{CPcompl} Theorem~3.3.
	Let  $ ( f ) = ( f_1, \ldots, f_m ) $ be a $ 0 $-normalized $ ( u ) $-standard basis of $ J $.
	Set $ \nu_i := n_{(u)} (f_i) $, for $ 1 \leq i \leq m $.
	Hironaka's preparation process (Theorem~\ref{Thm:Hironaka}) provides $ (\hy) := ( \hy_1, \ldots, \hy_r) $ and $ (\hf ) = ( \hf_1, \ldots, \hf_m ) $ in $ \hR $ such that $ ( u, \hy ) $ is a regular system of parameters for $ \hR $, $ ( \hy ) $ yields $ \Dir ( J' ) $, and
	$
	\poly{ \hf }{ u }{ \hy } = \cpoly{ J }{ u }.
	$
	
	Consider
	$ 
		\Lambda ( f, u, y )= \sum\limits_{ j = 1 }^n \big( \, 1 - \ell_j (f ,u , y) \, \big) \geq 0,
	$ 
	the measure introduced in Definition~\ref{Def:measure}, where $ L_1, \ldots, L_n $ are semi-positive linear forms on $ \IR^e $ defining $ \cpoly Ju $.
	Let $ a_{  j,i } \in \IQ_{\geq 0} $ be the rational numbers defining $ L_j $.
	We have that 
	
	\smallskip 
	
	\begin{itemize}
		\item 
		$ L_j (v_1, \ldots, v_e ) =  a_{j,1} v_1 + \ldots + a_{j,e} v_e $, for $ (v_1, \ldots, v_e) \in \IR^e $, $ 1 \leq j \leq n $.

\smallskip 
		
		\item 
		$ \cpoly Ju = \{ v = (v_1, \ldots, v_e ) \in \IR^e_{\geq 0 } \mid L_j (v) \geq 1, \mbox{ for } 1 \leq j \leq n \} $,
		
	\smallskip 
		
		\item 
		$ L_j(\cpoly Ju) = [1, +\infty [ $, for all $ j \in \{ 1 , \ldots, n \} $.
	\end{itemize}

\smallskip 

	When $ \Lambda ( f,u,y) = 0 $, we have $ \poly fuy = \cpoly Ju $ and we take $ (z) = (y) $.
	Since $ \Lambda ( f,u,y) = 0 $, the vertex normalization process cannot eliminate any of the vertices. 
	As there are only finitely many vertices, we obtain the desired vertex-normalized $ ( u ) $-standard basis $ ( g ) $ from $ ( f ) $ using Proposition~\ref{Prop:Nlz}.

	Suppose $ \Lambda ( f,u,y) > 0 $.
	Then there exists at least one $j \in \{ 1, \ldots, n \} $ such that 
	\[  
		L_j(\poly fuy) \supsetneq  [1, +\infty [
		\ \ \mbox{ or, equivalently,}
		\ \
		\ell_j (f ,u , y) < 1 .
	\]
	This means that the face of $ \poly fuy $ defined by $ L_j $ is solvable by Hironaka's process.
	We fix such a $ j $.
	Let us denote by $ \Delta_j $ the mentioned face,
	\[
		\Delta_j = \{ v \in \IR^e \mid L_j(v) = \ell_j (f,u,y) \} \cap \poly fuy .
	\]
	As in Hironaka \cite{HiroCharPoly} (2.6), 
	we consider the ideal of initial forms 
	\[
		\ini_{ \nu_j } ( J ) := \langle  \ini_{ \nu_j} ( g ) \mid g \in J \rangle \subset \gr_{ \nu_j }(R),
	\] 
	where $ \nu_j := v_{ L_j, u, y, f } $ is the valuation defined in Definition~\ref{Def:val_Luyf}.
	Since we fixed $ j $ with this property, let us set
	\begin{center}
		$ L := L_j $,
		\hspace{10pt}
		$ \nu := \nu_j $,
		\hspace{10pt}
		$ I_\nu :=  \ini_{ \nu } ( J ) $,
		\hspace{10pt} 
		and
		\hspace{10pt}
		$ \ell ( f ,u, y ) := \ell_j ( f, u , y ) $. 
	\end{center}
	We define 
	$ 
		t_i := \ini_\nu (u_i), 
	$ 
	for $ 1 \leq i \leq e $,
	and introduce 
	\[ 
		 \cR := \gr_{ \nu } ( R ) _{ \langle \, t_1, \ldots, t_e, Y_1, \ldots, Y_r  \, \rangle  }.
	\]
	Then $  \cR $ is a regular local ring with regular system of parameters~$ ( t, Y ) = ( t_1, \ldots, t_e, Y_1, \ldots, Y_r ) $ and residue field $  \cR / \cM = k = R / M $, where we denote by $ \cM  := \langle t, Y \rangle $ the maximal ideal of $  \cR $.
	We set 
	\[
	I := I_\nu \cdot  \cR \subset  \cR .
	\] 
	By Lemma \ref{lem:G=>excellent}, $  \cR $ is a G-ring.
	%
	The graded structure of $ \gr_{ \nu } ( R ) $ induces a monomial valuation on $  \cR $, again denoted by $ \nu $.
	Moreover, $ \nu $ extends canonically to the $ \cM  $-adic completion $ \widehat  \cR $ of $  \cR $. 
	We have inclusions $ \cR \subseteq \gr_{ \nu } ( \hR )_{ \langle t, Y \rangle } \subseteq \widehat \cR $ and an isomorphism
	{(use Proposition~\ref{Prop:gr_nu_luyf})}
	\[
	\widehat \cR \; \cong \; 
	\left\{
	\begin{array}{cc}
	\, \gr_{ \nu } ( \hR)_0 [[ Y, \{ U_i \}_{i \in \cI }]], 	& \ \mbox{if } \ell ( f ,u, y ) \neq 0,  \\[5pt] 
	\, \gr_{ \nu } ( \hR)_0 [[ \{ U_i \}_{i \in \cI }]], 		& \ \mbox{if }  \ell ( f ,u, y ) = 0. 
	\end{array} 
	\right.
	\] 
	
	\medskip
	
	We define 
	\[ \
		P_1 := \ini_{ \nu } ( f_1 ) , \ldots, P_m :=  \ini_{ \nu } ( f_m ) \in \cR.
	\]
	
	\begin{Claim}
		\label{Claim:P_0_nlz_u_std}
		 $ ( P ) = (P_1, \ldots, P_m ) $ is a $ 0 $-normalized $ ( t )$-standard basis of $ I $.
	\end{Claim}
	
\begin{proof}
	(i) Case $ \ell ( f ,u, y )\not =0$. Let $\Phi \in  \gr_{ \nu } ( R )_\mu \cap I_\nu$, let $g\in J$ be such that $ \ini_\nu(g)=\Phi$, let $\Delta:=\poly fuy$. To simplify notations, we replace $L$ by $ {1 \over \ell ( f ,u, y )}L$, then $ \ell ( f ,u, y )=1$. 
	We have $ g \in I(\Delta,\mu) $ (Definition~\ref{Def:I(Delta,b)}).
	By \cite{HiroCharPoly}~Theorem~(2.21), we can write $ g $ as
	\[
		g=\sum_{i=1}^m h_i f_i+g_+,\ 
		\mbox{ for } h_i\in I(\Delta;\mu-\nu_i), \ 1\leq i \leq m, 
		\mbox{ and } \nu(g_+)>\mu.
	\]
	As $\{ x \in \IR_{\geq 0}^e \mid L(x)=1 \} \cap \Delta $ is a face of $\Delta $,
	we have $ \nu(h_i)\geq \mu-\nu_i$.
	So, 
	the class of $ h_i $ in $ \gr_{ \nu } ( R )_{\mu-\nu_i} $ is defined,
		\[
			\cl_{\mu-\nu_i} (h_i) := \left\{
			\begin{array}{ll}
			\ini_\nu (h_i),	&
			\mbox{if } \nu(h_i) = \mu-\nu_i,
			\\
			0,	&
			\mbox{if } \nu(h_i) > \mu-\nu_i,
			\end{array}
			\right. 
		\]
	and	
	$\Phi=  \sum\limits_{i=1}^m H_i P_i,\ \mathrm{with}\ H_i:= \cl_{\mu-\nu_i}(h_i)\in \gr_{ \nu } ( R )_{\mu-\nu_i}$, $1\leq i \leq m $. The end is left to the reader.
	
	\smallskip 
	
	\noindent
	(ii) Case $ \ell ( f ,u, y ) =0$.  Let $\Phi \in  \gr_{ \nu } ( R )_\mu \cap I_\nu$, let $g\in J$ be such that in$_\nu(g)=\Phi$, let $\Delta:= \poly fuy $.
	Furthermore, let $b\in \IQ_+$ be the largest rational number 
	such that $g\in I(\Delta;b)$ and 
	$N:=\{ x\in \IZ_{\geq 0}^e \mid  L(x)\geq  \mu \}$. 
	We have $ g \in I(\Delta ,b) \cap I(N) $,
		where $ I(N) := I(N)_u := \langle u^A \mid A \in N \rangle \subset R $.
	Then, \cite{HiroCharPoly}~Theorem~(2.21) gives:
	\[ 
	g=\sum_{i=1}^m h_i f_i+g_+,
	\ 
	\mbox{ for } h_i\in I(\Delta; b -\nu_i) \cap I(N) \subset  R, 
	\ 
	1\leq i \leq m, 
	\]
	\[
	\mbox{and  }
	\nu	(g_+)\geq \mu
	\mbox{ with } \ g_+ \in I(\Delta,b_+),\ 
	\mbox{ for some } b_+>b.
	\]
	As $\bigcap\limits_{ c \in \IQ_+}I(\Delta;c)=\varnothing$, by faithful flatness, we can get $g_+=0$.
	Since we have $ h_i \in  I(N) $, we get $ \nu(h_i) \geq \mu$, 
	while $\nu(f_i)=0$, for $1\leq i \leq m$.
	Therefore, we obtain
	$\Phi=  \sum\limits_{i=1}^m H_i P_i,\ \mathrm{with}\ H_i= \cl_{\mu}(h_i)\in \gr_{ \nu } ( R )_{\mu}$, for  $1\leq i \leq m $.  
	(Here, the class of $ h_i $ is defined analogously to above).
	The end is left to the reader.
\end{proof}	
	
	\noindent 
	{\em Continuing the proof of Theorem~\ref{Thm:ReducEmpty}.}
	We define 
	\[ 
		\hY_1 := \ini_{ \nu } ( \hy_1 ), \ldots, \hY_r := \ini_{ \nu } ( \hy_r )  \in \gr_{ \nu } ( \hR ),
	\ \ \mbox{ and}
	\] 
	\[
		\hP_1 := \ini_{ \nu } ( \hf_1 ) , \ldots, \hP_m :=  \ini_{ \nu } ( \hf_m ) \in \widehat\cR. 	
	\]
	By Hironaka's construction,
	$ \hY_j \in \gr_{ \nu } ( \hR )_{ \ell_j (f,u,y) } $, for every $ j \in \{ 1, \ldots, r \} $, 
	and
	$ ( t, \hY ) $ is a regular system of parameters for $ \widehat \cR $, 
	and $ \poly \hf u\hy \subsetneq \poly fuy $.
	
	By the choice of the linear form $ L $ and since $ \poly \hf u \hy = \cpoly Ju $, we have 
		$
			\ell( \hf,u,\hy) = 1 > \ell (f,u,y).
		$
		If we consider expansions of $ \hf_1, \ldots, \hf_r $ and use the definition of $ \ell( \hf,u,\hy) $,
	we obtain $ \poly \hP t\hY = \varnothing $ which implies $ \cpoly It = \varnothing $.
	Furthermore, $ I \subset \cM  $,
thus we can apply Hypothesis~\ref{Hyp:EmptyCase} 
	and Claim~\ref{claim:polyvide}.  
	Therefore, there exist $ Z_1, \ldots, Z_r \in \cM  $ 
	and a $ 0 $-normalized $ (t) $-standard basis $ (Q) = (Q_1, \ldots, Q_m ) $ such that 
	$ Q_i = P_i + \sum\limits_{a=1}^{i-1}  H_{i,a} P_a \in \cR $,
	for $ H_{i,a} \in I( \poly Ptx ; \nu_{i}-\nu_a) $,
	and
	$ (t, Z) $ is a regular system of parameters for $ \cR $, 
	$ \langle Z \rangle \cdot \widehat \cR = \langle \hY \rangle \subset \widehat \cR $, and
	\begin{equation} 
	\label{eq:in_nu_poly_empty}
	\poly{ Q }{ t }{ Z } = \varnothing.
	\end{equation}

	Since $ Z_j \in I(\Delta(P,t,Y);1)  $ and 	
	$ \langle Z \rangle \cdot \widehat \cR = \langle \hY \rangle $,
	we are able to choose lifts,
	$ z_j = y_j - \vp_j \in R $ of $ Z_j $ in $ R $ such that $ z_j \in  I(\Delta(f,u,y);1) $, for $ 1 \leq j \leq r $. 
	Analogously, 
	we may assume without loss of generality that $ H_{i,a} \in \gr_\nu (R) $.
	We have that $ H_{i,a} $ are homogeneous (of degree $ \ell(f,u,y) (\nu_i - \nu_a) $ \wrt $ \nu $).
	So, we can choose lifts 
	$ h_{i,a} \in I( \poly fuy ; \nu_{i}-\nu_a ) \in R $, for $ 1 \leq a \leq i \leq m $,
	and we define $ (g) = (g_1, \ldots, g_m) $ by putting 
	\[ g_i := f_i + \sum\limits_{a=1}^{i-1}  h_{i,a} f_a, 
	\ \ \ \mbox{ for } \ \
	1 \leq i \leq m . 
	\]   

	Let $ \cL  $ be a positive linear form on $ \IR^e $
such that	
	\[
		\delta_\cL (f,u,y) = \inf \{ \cL(v) \mid v \in \poly fuy \} > 1.
	\]
	Recall that $ v_\cL ( \ ) $ denotes the monomial valuation defined via $ v_\cL(u^Ay^B)_{(u,y)} = \cL(A) + |B| $
	(Definition~\ref{Def:with_L}(2)).
	In particular, we have $ v_\cL(z_j)_{(u,y)} = v_\cL(y_j)_{(u,y)} $ for every $ j \in \{ 1, \ldots, r\} $.
	Hence, $ v_\cL(g)_{(u,z)} = v_\cL(g)_{(u,y)} $, for every $ g \in R $,
	and we can drop the reference to the regular system of parameters. 
	Since $ h_{i,a} \in I( \poly fuy ; \nu_{i}-\nu_a ) $, we have 
	\[
		v_\cL(g_i) =  v_\cL\Big(f_i + \sum\limits_{a=1}^{i-1}  h_{i,a} f_a\Big) \geq \nu_i
		\ \ 
		\mbox{ for } 1 \leq i \leq m.
	\]
	Since $ (f) $ is $ 0 $-normalized, we must have equality,
	and, furthermore, we have 
	$ \langle \ini_\cL (g_1), \ldots, \ini_\cL(g_m) \rangle  = \ini_\cL (J) $.
	Hence, $ (g_1, \ldots, g_m )$ is a $ (u)$-effective standard basis of $ J $ 
	with reference datum $ (z,\cL ) $. 
	This provides
	\[
		\delta_\cL(g,u,z) \geq \delta_\cL(f,u,y)
	\]
	and as this inequality holds for every positive $ \cL $ as above, we have an inclusion 
	$ \poly guz \subseteq \poly fuy $.

	On the other hand, \eqref{eq:in_nu_poly_empty} is equivalent ot $ \ell(g,u,z) > \ell(f,u,y) $ for our fixed linear form $ L $.
	We conclude that
	\[ 
		0  \leq \Lambda ( g, u, z ) < \Lambda ( f, u, y ). 
	\]
	Since $ \Lambda (.) $ takes only values in a discrete subset of $ \IQ_{\geq 0 } $, 
	the strict decrease can only happen finitely many times. 
	We repeat the arguments of this proof for $ \poly guz $ instead of $ \poly fuy $. 
	Eventually, we obtain in finitely many steps a vertex-normalized $ (u) $-standard basis $ ( g^* ) $ of $ J $ and elements $ ( z^* ) $ in $ R $ extending $ ( u ) $ to a regular system of parameters for $ R $ with all desired properties. 
	In particular, 
	$ \poly{g^*}u{z^*} = \cpoly Ju. $
\end{proof}

\smallskip 

Using the notation of the proof, we have 

\begin{Lem} 
	\label{Lem:tangent_iso}
	The tangent cone $ C_t (I) \subset \IA_k^r $ 
	of $ I' := I \cdot \cR/\langle t \rangle $ \wrt $ \cM' := \cM \cdot \cR/\langle t \rangle $
	is isomorphic to the tangent cone 
	$ C_u (J)  \subset \IA_k^r $ of $ J' $ \wrt $ M' = M \cdot R / \langle u \rangle $. 
	In particular, their directrices (resp.~ridges) are isomorphic.
	Hence, the following equalities hold:
	\[  
	\begin{array}{c} 
		\dim \Dir (C_{t}(I)) =\dim \Dir (C_{u}(J)) \\
		\dim \Rid (C_{t}(I)) = \dim \Rid (C_{u}(J))
	\end{array} 
	\] 
	Moreover, we have $ \HS(C_{t}(I)) \cong \HS(C_{u}(J)) $.
\end{Lem} 

\begin{proof}
	Let $ g \in R $ and put $ G := \ini_\nu(g) $.
	If $ \overline{G} $ denotes the image of $ G $ in $ \cR/ \langle t \rangle $, 
	then we observe that $ \ini_{\cM'}( \overline{G} ) $ coincides with $ \ini_{M'}(\overline{g}) $,
	where $ \overline{g} = g \mod \langle u \rangle $
	(up to renaming the variables).
	Therefore, $ C_t (I) \subset \IA_k^r $ and $ C_u (J)  \subset \IA_k^r $ are isomorphic. 
	The remaining part is clear. 
\end{proof}

In general, without assumptions $ (*) $, $ (\Pol) $, or $ R $ Henselian, we have

\begin{Prop}
	\label{Prop:Pol_often_and_else_smaller_dim}
	Let $ R $ be a regular local $ G $-ring
	and
	let $ J \subset R $ be a non-zero ideal.
	Let $ (u,y) $ be a regular system of parameters for $ R $
	such that $ (u) $ is a regular $ (R/J) $-sequence
	and $ (y) $ determines the directrix of $ J' = J \cdot R/\langle u \rangle $. 
	Let $ (f) = (f_1, \ldots, f_m) $ be a $ 0 $-normalized $ (u) $-standard basis for $ J $. 
	
	Using the notation of the proof of Theorem~\ref{Thm:ReducEmpty}, we have:
	\begin{enumerate}
		\item 
	If $ \ell(f,u,y) > 0 $,
	then hypothesis $ (\Pol) $ holds for $ (I,\cR,\cS, P, t,Y) $,
	where 
	\[
		\cS := 
		{R \over \langle \{u_i\}_{i\in \cI}, y \rangle} [\{ U_i\}_{i\in \cI} ]_{ \langle t \rangle },
	\]
	
		\item 
		If $ \ell(f,u,y) = 0 $,
		then
		the important data of $ (I,\cR, P, t,Y) $ is contained in $ \cR_0 := \gr_{\nu} (R)_0  = R/\langle \{ u_i\}_{i\in \cI} \rangle $ (Proposition~\ref{Prop:gr_nu_luyf})
		and we have $ \dim ( \cR_0 ) < \dim (R) $. 
		\\
		More precisely,
			$ P_i = \ini_{ \nu }(f_i) , Y_j = \ini_{ \nu }(y_j) \in  \gr_{\nu} (R)_0 $,
			for all $i , j $. 
	\end{enumerate}
\end{Prop}

\begin{proof}
	Part (2) is an easy observation. 
	
	For (1), set $ S :=  {R}/{\langle \{u_i\}_{i\in \cI}, y\rangle} $.
	Recall that $ \cR $ is the localization of $ \gr_{\nu} (R) = S [\{ U_i \}_{i \in \cI}, Y_1, \ldots, Y_r] $  (Proposition~\ref{Prop:gr_nu_luyf}) 
	at the maximal ideal $ \langle t , Y \rangle $. 
	Furthermore, for $ 1 \leq i \leq m $, we have 
	$ P_i = \ini_\nu (f_i) \in \gr_{\nu} (R) \subset \cS [Y] $
	and we have seen in Claim~\ref{Claim:P_0_nlz_u_std} that $ (P_1, \ldots, P_m ) $ is a $ 0 $-normalized $ (t)$-standard basis of $ I $. 
	Since $ \nu(y) = \ell(f,u,y) > 0 $ (Definition~\ref{Def:val_Luyf}),
	we have $ \deg_Y(P_i) = n_{(u)} (P_i ) $. 
	Finally, Lemma~\ref{Lem:tangent_iso} implies that $ (Y) $ determines the directrix of $ I' $,
	since $ (y) $ determines the directrix of $ J'  $.  
	In total, we obtain that
	condition $ ( \Pol ) $ holds for $ (I,\cR,S, P, t,Y) $.
\end{proof}

\begin{Rk}
	As a consequence, if $ \ell(f,u,y) > 0 $, it is sufficient if we can handle Claim~\ref{claim:polyvide} under hypothesis $ (\Pol) $
	(which will follow from Theorems~\ref{Thm:Strongly_Nlz} and~\ref{Thm:Claim_2.3_pol})
	in order to apply the techniques of the proof of Theorem~\ref{Thm:ReducEmpty}
	to get closer to the characteristic polyhedron.
	
	On the other hand, if $ \ell(f,u,y) = 0 $, 
	an induction on the dimension of the ambient ring may be applied. 
	Yet, we still have to require $ (*) $ to hold for the original data $ J \subset R $  or $ R $ to be Henselian 
	in order to be able to prove Claim~\ref{claim:polyvide}.
\end{Rk}

\begin{Prop}
	\label{Prop:*_and_Pol_stable}
	Let $ (J,R,f,u,y) $ and  $ (I,\cR,P,t,Y) $ be as in the proof of Theorem~\ref{Thm:ReducEmpty}.
	\begin{enumerate}
		\item 
		Let $ (S,N,k) $ be a regular local $ G $-ring contained in $ (R,M,k) $.
		Suppose $ (u) $ is a regular system of parameters for $ S $.
		If hypothesis $ (\Pol) $ holds for $ (J,R,S,f,u,y) $, 
		then $ (\Pol) $ also holds for $  (I,\cR,\cS,P,t,Y) $,
		where 
		(using the notation of Proposition \ref{Prop:gr_nu_luyf})
		\[ 
		\cS:= 
		\left\{ 
		\begin{array}{ll}
		\displaystyle 
		{R \over \langle \{u_i\}_{i\in \cI}, y \rangle} [\{ U_i\}_{i\in \cI} ]_{ \langle t \rangle },
		& \mbox{ if } \ell(f,u,y) > 0 ,
		\\[15pt]
		\displaystyle {S \over \langle \{u_i\}_{i\in \cI} \rangle} [\{ U_i\}_{i\in \cI} ]_{\langle t \rangle },
		& \mbox{ if } \ell(f,u,y) = 0.
		\end{array} 
		\right. 
		\]
		\item 
		If $ (*)(a) $ (resp.~$(*)(b)$) holds for $ (J,R,u) $, then $ (*)(a) $ (resp.~$(*)(b)$) holds for $ (I, \cR, t) $, too.
		\item 
		If $ R $ is Henselian, then either 
		the important data of $ (I,\cR,P,t,Y) $ lies in $ \gr_\nu (R)_0 $, which is Henselian,  
		or $(\Pol) $ holds for $ (I,\cR,\cS, P,t,Y) $, for $ \cS $ as in part (1).
	\end{enumerate}
\end{Prop}

\begin{proof}
	Statement (1) an immediate consequence of Proposition~\ref{Prop:Pol_often_and_else_smaller_dim}.
	Its proof is left as an exercise to the reader.
	Further, if hypothesis $ (*)(a) $ holds for $ (J,R,u) $ then $ (*)(a) $ holds for $ (I,\cR,t) $, by Lemma~\ref{Lem:tangent_iso}. 

	Let us come to $ (*)(b) $.
	Suppose that $ \car(k) \geq \frac{\dim(X)}{2} + 1 $, where $ X = \Spec(R/J) $.
	Set $ \cX := \Spec (\cR/I) $.
	We claim that $ \dim (\cX) \leq \dim (X) $,
	from which we obtain immediately $ \car(k) \geq \frac{\dim(\cX)}{2} + 1 $.
	Every $ P_i = \ini_\nu(f_i)$ can be written as 
	$ 
	P_i = F_i(Y) + \sum\limits_{|B| < \nu_i } 
	 \lambda_{A,B,i} t^A Y^B,
	$
	with $ \lambda_{A,B,i} $ invertible or zero,
	and
	$
	\ini_\cM(F_i(Y)) = \ini_0(f_i)\in k[Y]$. 
	By taking roots of the parameters~$(t)$, 
	we may suppose that $ \ord_{\cM} (t^A y^B)> n_{(u)}(f_i) = n_{(t)}(P_i) $, for $1\leq i \leq m$, if $ \lambda_{A,B,i}$ is invertible.
	Then $(\ini_0(f_1),\ldots,\ini_0(f_m))\subset k[T,Y]$ (with some abuse of notation for $Y$) are in the ideal of the tangent cone of $\cX $ at the origin. 
	This gives the claimed inequality $ \dim(\cX) \leq \dim(X) $.  
	
	(3)
	Clearly, the property of being Henselian is not necessarily stable when passing to 
	$ \cR $, 
	which is the localization of a polynomial ring at a maximal ideal.
	Since being a Henselian $ G $-ring is stable by taking quotients, 
	$ \gr_\nu(R)_0 = R/\langle \{u_i\}_{i\in \cI} \rangle $ is also a Henselian regular local $ G $-ring, so Proposition~\ref{Prop:Pol_often_and_else_smaller_dim} yields the assertion.
\end{proof}

In conclusion,
we have reduced the problem of determining the characteristic polyhedron without passing to the completion 
to Claim~\ref{claim:polyvide},
the case of an empty characteristic polyhedron, $ \cpoly Ju = \varnothing $, 
where $ (u) $ is a regular $ (R/J) $-sequence.
While the reduction is valid without hypothesis $ (*) $ or $ (\Pol) $ or $ R $ Henselian, we need them to find suitable coordinates $ (z) = (z_1,\ldots, z_r) $.

%
%
%
%
%
%
%
%
%
%

\medskip

\section{Empty Characteristic Polyhedron I: Hypothesis $(\Pol)$}
\label{sec:empty(Pol)}

We begin our treatment of {the} case of an empty characteristic polyhedron with
the situation, where hypothesis $ ( \Pol ) $ holds.
Here, we can prove slightly stronger results. 
We introduce a stronger version of being normalized and show that it can also be achieved via a finite procedure.
Furthermore, we prove, once obtained, it is stable under translations in the variables $ (y) = (y_1, \dots, y_r) $. 
In the second subsection, we deduce that the desired coordinates $ (z_1, \ldots, z_r )$ can be obtained by a translation. 

Suppose that $ (\Pol) $ holds for $ (J,R,S,f,u,y) $.
Recall that this means:
\begin{itemize}
	\item 
	$ (S,N,k) $ is a regular local $G$-ring contained in $ (R,M,k) $, 
	with the same residue field, 
	and $ (u) = (u_1, \ldots, u_e) $ {is} a regular system of parameters for $ S $,
	\item 
	$ R = S[y_1,\ldots,y_r]_{M} $, $ J \subset R $ {is} a non-zero ideal such that the directrix of $ J' = J \cdot R/\langle u \rangle $ is determined by $ ( y ) $,
	and $ \cpoly Ju = \varnothing $,
	\item 
	$ (f) = (f_1, \ldots, f_m ) $ is a $ (u) $-standard basis of $ J $
	such that we have $ f_i\in S[y_1,\ldots,y_r] $ 
	with 
	$ \deg_{y}(f_i) = n_{(u)} (f_i) $,
	for $ 1 \leq i \leq m $.
	By Lemma~\ref{Lem:0Nlz}, we may suppose that $ (f) $ is $ 0 $-normalized and that the condition 
	$ \exp(f_i) \preceq \exp(f_{i+1}) $, for $ 1 \leq i < m $, 
	holds true, without loss of generality.
\end{itemize}

\begin{Rk} 
	\label{Rk:pol_important}
In Proposition~\ref{Prop:Pol_often_and_else_smaller_dim}(1), we have 
already seen
that $(\Pol)$ is an essential case
since it appears even in the general situation for $ R $ any regular 
local $ G $-ring (without restrictions on $ J \subset R $).

Let us note that in \cite{CP2} and \cite{CPmixed} starting from section 5, the case  $(\Pol)$ is 
automatically fulfilled and this helps the authors to control the 
behavior of the characteristic polyhedra after permissible blowing ups. 
In general, with the usual notations, blow up the origin and 
suppose there is a ``very near point" of parameters 
$(  u_1, v_2, \ldots, v_{e'}, \frac{y_1}{u_1},\ldots, \frac{y_r}{u_1} ) $ 
in the first chart
\[  
R \longrightarrow 
R \left[ u_1, \frac{u_2}{u_1},\ldots, \frac{u_e}{u_1}, \frac{y_1}{u_1},\ldots, \frac{y_r}{u_1} \right]_{\displaystyle \langle u_1,v_2,\ldots,v_{e'}, \frac{y}{u_1} \rangle } =:R',
\]
for suitable $(v_2, \ldots,v_{e'}) $.
Set $ v_1 := u_1 $ and $ y_j ' := \frac{y_j}{u_1} $, for $ 1 \leq j \leq r $.  
For every face of the polyhedron 
$\Delta(f';v;y')\subset \IR^{e'}$ of equation
$L (t_1, \ldots, t_{e'} ) =  a_{1} t_1 + \ldots + a_{e'} t_{e'}=  \ell (f',v,y')$
with $ a_1 > 0 $ positive, 
we have $\ell (f',v,y')>0$, 
i.e., we have $(\Pol)$ for 
$\gr_{\nu_{L,v,y',f'}} (R)$.
It appears that in \cite{CJS}, 
\cite{HomeworkDim2},
\cite{CP2}, and \cite{CPmixed}, the invariants are defined with these 
faces only.
\end{Rk}

%
%
%
%
%
%
%
%
%
%

\bigskip

\subsection{Strong normalization for $(\Pol) $}

First, we discuss a finite normalization process in order to obtain suitable generators for the ideal $ J \subset R $. 
Throughout the section, we assume that hypothesis $ (\Pol) $ holds for some given $ (J,R,S,f,u,y) $.

Given $ h \in R $, we have a finite expansion $ h = \sum C_{A,B} u^A y^B $ with coefficients $ C_{A,B} \in R^\times \cup \{ 0 \} $.
For the purpose of this subsection, the expansion does not provide sufficient control.
Since $ R = S[y]_M $, we may assume $ h \in S[y] $ after multiplying by a unit. 
Hence, we have
\[ 
\label{eq:expan_new}  	
h = \sum_{B: |B|\leq \nu} \vp_B \, y^B,
\ \ \
\mbox{ for } \vp_B =  \vp_B (u) \in S
\mbox{ and } \nu := \deg_{y}(h).
\]

\begin{Def} 
	\label{Def:strongly-nlzd}
	Let $ S $ be a regular local ring with regular system of parameters $ (u) = (u_1, \ldots, u_e ) $,
	let $ R = S[y_1, \ldots, y_r]_M $, for $ M = \langle u , y \rangle $.
	Let $ (f) = (f_1, \ldots, f_m) $ be a $ 0 $-normalized system of elements in $ S[y_1, \ldots, y_r] $ with $ \deg_{y}(f_i) = n_{(u)} (f_i) =: \nu_i $,
	for $ 1 \leq i \leq m $. 
	Let $ \{ \vp_{B,i} \in S \mid |B|\leq \nu_i \} $ be the coefficients appearing in the expansion of $ f_i $ of the form above, for $ i \in \{ 1, \ldots, m \} $.
	We say $ (f) =  (f_1, \ldots, f_m) $ is {\em strongly normalized
	(with respect to $ (y) $)}
	if we have $ \vp_{B,i} \equiv 0 $, for every $ i \in \{ 2, \ldots, m \} $
	and every $ B \in \exp( f_1, \ldots, f_{i-1} ) 
	=
	\exp ( \langle \ini_0 (f_1), \ldots, \ini_0( f_{i-1}) \rangle ) $
	(Definition~\ref{eq:def_exp(f1...fm)}). 
\end{Def} 
{\em Observe that we do not necessarily require $ S $ to be a $ G $-ring in this subsection.}
The main result here is

\begin{Thm}
	\label{Thm:Strongly_Nlz}
	Let $ S $ be a regular local ring with regular system of parameters $ (u) = (u_1, \ldots, u_e ) $,
	let $ R = S[y_1, \ldots, y_r]_M $, for $ M = \langle u , y \rangle $
	and $ J \subset R $ be a non-zero ideal.
	Let $ ( f ) = (f_1, \ldots, f_m) $ be a $ 0 $-normalized $ (u) $-standard basis for $ J $
	with $ f_i \in S[y] $ and $ deg_y(f_i) = n_{(u)}(f_i) =: \nu_i $,
	for $ 1 \leq i \leq m $.

	There exist $ x_{i,j} \in S[y] \cap \langle u \rangle $ with $ \deg_y(x_{i,j}) \leq \nu_i- \nu_j $, 
	for	$ 2 \leq i < j  \leq m $, such that,
	if we define 
	$ g_1 := f_1 $ and 
	$ g_i := f_i - \sum\limits_{j=1}^{i-1} x_{i,j} f_j $,
	for $ i \geq 2 $,
	then $ ( g) = (g_1, \ldots, g_m) $ is a $ (u) $-standard basis for $ J $ that is strongly normalized with respect to $ (y) $
	and $ \deg_y(g_i) = n_{(u)}(g_i) = \nu_i $.
\end{Thm}

{We do not have to modify $ f_1 $ and $ g_1 = f_1 $. Thus,}
we have to show the following (using the notions of the theorem):
Suppose $ (g_1, \ldots, g_{i-1}) $ are strongly normalized.
Then there is a finite process for constructing $ x_{i,j} $, with $ i < j $ such that $ (g_1, \ldots, g_{i-1}, g_i) $ are strongly normalized
(Proposition~\ref{Prop:m-1_to_m}).
After that, in Lemma \ref{Lem:remains(u)std}, we prove that $ (g) $ remains a $ (u) $-standard basis,
which completes the proof of Theorem~\ref{Thm:Strongly_Nlz}.
The property on the degree of $ g_i $ follows since $ (f) $ is $ 0 $-normalized and by the degree condition on $ x_{i,j} $.

\begin{Prop}
	\label{Prop:m-1_to_m}
	Let $ S $ be a regular local ring with regular system of parameters $ (u) = (u_1, \ldots, u_e ) $,
	let $ R = S[y_1, \ldots, y_r]_M $, for $ M = \langle u , y \rangle $.
	Let 
	$ (g_1, \ldots, g_{i-1}, f_i)  $ be a system of elements in $ S[y] $ that is $ 0 $-normalized
	and for which $ \deg_y (g_j) = n_{(u)}(g_j) =: \nu_j $ and $ \deg_y (f_i) = n_{(u)}(f_i) =: \nu_i $.
	Assume that $ (g_1, \ldots, g_{i-1}) $ is strongly normalized.

	There are elements $ x_{i,j} \in S[y] \cap \langle u \rangle $ with $ \deg_y(x_{i,j}) \leq \nu_i- \nu_j $, 
	for	$ 2 \leq i < j  \leq m $, such that
	$ (g_1, \ldots, g_{i-1}, g_i) $ is strongly normalized with respect to $ (y) $
	if we define 
	$ g_i := f_i - \sum\limits_{j=1}^{i-1} x_{i,j} g_j $.
\end{Prop}

As an immediate consequence, we see that $ \deg_y(g_i) = n_{(u)}(g_i) $.
For the proof, we introduce the following measure.

Recall that the total ordering $ \preceq $ on $ \IZ^r $ is defined by
$ A  \preceq B $ if and only if 
$ |A| < |B| $ or if 
$ |A| = |B| $ and $ A >_{lex} B $ (see \eqref{eq:def_prec_succ}). 

\begin{Def} 
	\label{Def:D(f)_C(f)}
	Let 
	\[  
	I := \langle \ini_0(g_1), \ldots, \ini_0(g_{i-1}) \rangle \subset k[Y_1, \ldots, Y_r].
	\]	
	Let $ f  = \sum\limits_{|B| \leq \nu } \vp_B \, y^B \in S[y] $ with $ \vp_B \in S $ and $ \nu := \deg_y (f) $.
	Suppose that $ f \notin \langle u \rangle $.
	We define
	\[ 
	D(f) := \inf_\preceq \{ B \in \IZ^r_{\geq 0 } \mid 
	 \vp_B \neq 0  \, \wedge \, B \in \exp(I) \}.
	\]
\end{Def} 

\noindent 
If $ D(f) $ does not exist, then $ (g_1, \ldots, g_{i-1}, f ) $ is strongly normalized,
and vice versa.

\begin{Lem}
	\label{KeyLemma} 
	For every $ D \in \exp(I) $ with $ \nu := |D|  $, there exists an element $ h_D \in \langle g_1, \ldots , g_{i-1} \rangle \cap S[y]  $
	such that $ \exp(h_D) = D $ and
	\[ 
	h_D = y^D + \sum_{\substack{|B|\leq \nu \\ B \in \exp(I) \\ B \succ D}}  \psi_{D,B} \, y^B
	+ \sum_{\substack{|B|\leq \nu \\ B \notin \exp(I)}} \psi_{D,B} \, y^B,
	\ \
	\mbox{ for } 
	\psi_{D,B} \in S,
	\]
	\[\mbox{and } \ 
	h_D = \sum\limits_{j=1}^{i-1} s_{D,j}\, g_j,
	\  
	\mbox{ for } 
	s_{D,j} \in S[y]
	\mbox{ with } \deg_y(s_{D,j}) \leq \nu- \nu_j. 
	\]

\end{Lem}

\begin{proof}
	We argue by an increasing induction on $ D $.
	We begin with the case 
	$ D = \exp(g_1) = \min_\preceq \{ \exp(G) \mid G \in I \} $.
	Since $ \deg_y(g_1) = n_{(u)} (g_1) = \nu $, we have
	\[
	g_1 = y^{D} 
	+ 
	\sum\limits_{\substack{|B|\leq \nu \\ B \prec D}} 
	\vp_{1,B}  \, y^B 
	+ 
	\sum\limits_{\substack{|B|\leq \nu \\ B \succ D}}
	\vp_{1,B} \, y^B,
	\ \ \mbox{ for }
	\vp_{1,B} \in S
	\]
	and $ \vp_{1,B} \in \langle u \rangle \subset S $ if $ B \prec D $.
	(Without loss of generality, the coefficient of $ y^{D} $ is $ 1 $).
	For $ B \prec D $, we have $ B \notin \exp(I) $
	and therefore, we may take $ h_D := g_1 $.
	
	Suppose $ D \succ \exp(g_1) $ and assume that the statement of the lemma is true for all $ B \prec D $ with $ B \in \exp(I) $. 
	Recall that we have
	$ \exp(I) =
	\exp ( \langle \ini_0 (g_1), \ldots, \ini_0(g_{i-1}) \rangle ) 
	$,
	where the appearing ideal lies in $ k[Y_1, \ldots, Y_r] $.
	Since $ D \in \exp(I) $,
	there are $ P_{D,j} \in k[Y] $, which are homogeneous of degree $ |D| - \nu_j $ 
	(where $ \nu_j = n_{(u)}(g_j) $ and $ 1 \leq j < i $),
	such that
	$ \exp(H) = D $,
	for $ H := \sum\limits_{j=1}^{i-1} P_{D,j} \cdot \ini_0 (g_j) $.
	Note that $ \deg_Y(H) = |D|  $.
	
	Write $ P_{D,j} = \sum_{C} \lambda_{C,D,j} Y^C $, for $ \lambda_{C,D,j} \in k $.
	By construction, $ \lambda_{C,D,j} \neq 0 $ implies $ |C| = |D| - \nu_j $.
	For every $ \lambda_{C,D,j} \in k $, we choose $ \mu_{C,D,j} \in S $ such that 
	$ \mu_{C,D,j} \equiv \lambda_{C,D,j} \mod \langle u \rangle $.
	We define $ \rho_{D,j} := \sum_{C} \mu_{C,D,j} y^C $.
	Clearly, $ \rho_{D,j} \in S[y] $ and 
	\begin{equation} 
	\label{eq:pf_deg} 
		\deg_y (\rho_{D,j}) = |D|-\nu_j = \nu - \nu_j 
	\end{equation} 
	Let $ h := \sum\limits_{j=1}^{i-1} \rho_{D,j} g_j  \in S[y] $.
	We have $ \deg_y (h ) = |D| = \nu $ and  $ \exp(h) = D $. 
	By definition of the leading exponent (see~\eqref{eq:def_exp}), we have 
	\[
	h=
	y^D 
	+ \sum_{\substack{|B|\leq \nu \\ B \in \exp(I) \\ B \prec D}}  \psi_{D,B} \, y^B
	+ \sum_{\substack{|B|\leq \nu \\ B \in \exp(I) \\ B \succ D}}  \psi_{D,B} \, y^B
	+ \sum_{\substack{|B|\leq \nu \\ B \notin \exp(I)}} \psi_{D,B} \, y^B
	\]
	where
	$ \psi_{D,B} \in S $ and 
	\begin{equation} 
	\label{eq:coeff_in_u}
	\psi_{D,B} \in \langle u \rangle, 
	\ \ \mbox{ if } B \prec D.
	\end{equation}
	(Without loss of generality, the coefficient of $ y^{D} $ is $ 1 $).
	If the first sum is $ 0 $, we are done and define $ h_D := h $
	and $ s_{D,j} := \rho_{D,j} $.
	
	Assume this is not the case,
	i.e., $ B_0 := D(h) \prec D $ (Definition~\ref{Def:D(f)_C(f)}).
	By the induction hypothesis,
	there exists $ h_{B_0} $ such that
	\begin{itemize}
		\item 
		$ h_{B_0} = y^{B_0} + \sum\limits_{\substack{|B|\leq \nu \\ B \in \exp(I) \\ B \succ {B_0}}}  \psi_{{B_0},B} \, y^B
		+ \sum\limits_{\substack{|B|\leq \nu \\ B \notin \exp(I)}} \psi_{{B_0},B} \, y^B $,
		for 
		$ \psi_{{B_0},B} \in S $,
		and 
		
		\item 
		$ h_{B_0} = \sum\limits_{j=1}^{i-1} s_{B_0,j} g_j $,
		for
		$ s_{B_0,j} \in S[y] $
		with $ \deg_y(s_{B_0,j}) \leq \nu- \nu_j $.
	\end{itemize}
	We eliminate the $ y^{B_0} $-term in $ h $ using $ h_{B_0} $:
	\[
	\begin{array}{rl}
		h^{(1)}  
		& :=  
		h - \psi_{D,B_0} h_{B_0} =
		\\[5pt]
		& = 
		\Big( 
		y^D
		+  \psi_{D,B_0} \, y^{B_0}
		+ \sum\limits_{\substack{|B|\leq \nu \\ B \in \exp(I) \\ 
				B_0 \prec B \prec D}}  
			\psi_{D,B} \, y^B
	+ \sum\limits_{\substack{|B|\leq \nu \\ B \in \exp(I) \mbox{ \tiny and } B \succ D
	\\
\mbox{\tiny or } B \notin \exp(I) }}  
	\hspace{-10pt} 
	\psi_{D,B} \, y^B	
	\Big) 
	- 
	\\[30pt]
	&
	\ \ \ 
	-
	\psi_{D,B_0} \Big(
		y^{B_0} + \sum\limits_{\substack{|B|\leq \nu \\ B \in \exp(I) \\ B \succ {B_0}}}  \psi_{{B_0},B} \, y^B
		+ \sum\limits_{\substack{|B|\leq \nu \\ B \notin \exp(I)}} \psi_{{B_0},B} \, y^B
		\Big)
	\\[40pt]
	& = 
	(1 - \psi_{D,B_0} \psi_{B_0,D}) 
	y^D
	+ 
	\sum\limits_{\substack{|B|\leq \nu \\ B \in \exp(I) \\ 
			B_0 \prec B \prec D}}  
	(\psi_{D,B}  - \psi_{D,B_0} \psi_{B_0,B})
	y^B
	\\[30pt]
	& \ \ \ 
	+ \sum\limits_{\substack{|B|\leq \nu \\ B \in \exp(I) \mbox{ \tiny and } B \succ D
			\\
			\mbox{\tiny or } B \notin \exp(I) }}  
	\hspace{-10pt} 
	(\psi_{D,B}  - \psi_{D,B_0} \psi_{B_0,B})
	y^B	
	\end{array} 
	\]
	Since $ 1 - \psi_{D,B_0} \psi_{B_0,D}, \psi_{D,B}  - \psi_{D,B_0} \psi_{B_0,B} \in S $, 
	we obtain 
	\[  
		D(h^{(1)}) \succ D(h) .
	\]  
	By \eqref{eq:coeff_in_u}, we have
	$ \psi_{D,B_0} \in \langle u \rangle $
	and hence $ 1 - \psi_{D,B_0} \psi_{B_0,D} \in S^\times $ is a unit. 
	
	Furthermore, 
	using $ h = \sum\limits_{j=1}^{i-1} \rho_{D,j} g_j $ 
	and $ h_{B_0} = \sum\limits_{j=1}^{i-1} s_{B_0,j}\, g_j $,
	we have
	\[
	h^{(1)}  = 
	\sum\limits_{j=1}^{i-1} \rho_{D,j} g_j 
	- \psi_{D,B_0} (\sum\limits_{j=1}^{i-1} s_{B_0,j}  g_j)
	=  
	\sum\limits_{j=1}^{i-1} ( \underbrace{\rho_{D,j} - \psi_{D,B_0} s_{B_0,j} }_{\displaystyle =: \rho_{D,j}^{(1)}} ) g_j ,
	\]
	where \eqref{eq:pf_deg}, $ \psi_{D,B_0} \in S $, and $ \deg_y(s_{B_0,j}) \leq \nu- \nu_j $
	imply $ \deg_y ( \rho_{D,j}^{(1)} ) \leq \nu - \nu_j $.

	Repeating the elimination step finitely many times, we eventually reach $ h^{(N)} $ 
	with $ D(h^{(N)}) = D $, for some $ N \in \IZ_+ $. 
	Using \eqref{eq:coeff_in_u}, we see that the coefficient of $ y^D $ in an expansion  of $ h^{(N)} $ 
	is of the form $ \psi = 1 + \psi_+ $, 
	for $ \psi_+ \in \langle u \rangle \subset S $. 
	Moreover, we have 
	$ h^{(N)} = \sum\limits_{j=1}^{i-1} \rho_{D,j}^{(N)} \, g_j $,
	for
	$ \rho_{D,j}^{(N)} \in S[y] $
	with $ \deg_y(\rho_{D,j}^{(N)}) \leq \nu- \nu_j $.
	Thus, $ h_D := \psi^{-1} h^{(N)} $ fulfills all desired properties. 
\end{proof}

\medskip

\begin{proof}[Proof of Proposition \ref{Prop:m-1_to_m}]
	If $ ( g_1, \ldots, g_{i-1}, f_i) $ is strongly normalized, then the statement holds for $ g_i := f_i $.
	Suppose this is not the case. 
	Let $ f := f_i $, $ \nu := n_{(u)} (f) = \deg_y (f) $, and $ D := D(f) $. 
	We have
	\[
	f = \vp_D \, y^D 
	+ 
	\sum\limits_{\substack{|B|\leq \nu \\ B \in \exp(I) \\ B \succ D}} 
	\vp_B  \, y^B 
	+ 
	\sum\limits_{\substack{|B|\leq \nu \\ B \notin \exp(I)}}
	\vp_B \, y^B,
	\]
	where 
	$ \vp_D, \vp_B \in S $ and  $ \vp_D\in \langle u \rangle$, 
	$ \vp_D \neq 0  $.
	
	Since $ D = D(f) \in \exp(I) $, Lemma \ref{KeyLemma} implies that there exists an element
	$ h_D \in \langle g_1, \ldots, g_{i-1} \rangle \cap S[y] $ 
	such that $ \exp(h_D) = D $,
	\begin{itemize}
		\item[(i)] 
	$ h_D = y^D + \sum\limits_{\substack{|B|\leq \nu \\ B \in \exp(I) \\ B \succ D}}  \psi_{D,B} \, y^B
	+ \sum\limits_{\substack{|B|\leq \nu \\ B \notin \exp(I)}} \psi_{D,B} \, y^B,
	$
	for $ \psi_{D,B} \in S $, and

		\item[(ii)] 
		$ h_D = \sum\limits_{j=1}^{i-1} s_{D,j} g_j $,
		for
		$ s_{D,j} \in S[y] $ with 
		$ \deg_y(s_{D,j}) \leq \nu- \nu_j $.
	\end{itemize}
	We define 
	\[
	f^{(1)} := f - \vp_D \, h_D = 
	\]
	\[ 
	= 
	\sum\limits_{\substack{|B|\leq \nu \\ B \in \exp(I) \\ B \succ D}} 
	(\vp_B - \vp_D \psi_{D,B} ) \,  y^B 
	+ 
	\sum\limits_{\substack{|B|\leq \nu \\ B \notin \exp(I)}}
	(\vp_B - \vp_D \psi_{D,B}) \, y^B
	.
	\]
	Since $ (g_1, \ldots, g_{i-1}, f_i ) $ is $ 0 $-normalized, 
	 we have $ \deg_y(f^{(1)}) = \deg_y (f) = n_{(u)} (f) $. 
	
	If $ (g_1, \ldots, g_{i-1}, f^{(1)} ) $ is strongly normalized, we define $ g_i := f^{(1)} $ and stop.
	Else, $ D(f^{(1)}) $ is defined and $ D(f^{(1)})  \succ D $
	as $ \vp_B - \vp_D \,\psi_{D,B} \in S $. 
	We repeat the elimination step finitely many times to obtain $ f^{(N)} $
	such that $ (g_1, \ldots, g_{i-1}, f^{(N)} ) $ is strongly normalized.
	Set $ g_i := f^{(N)} $.

	By (ii) and since $ \varphi_D \in S $, we have that
	$ g_i = f_i - \sum\limits_{j=1}^{i-1} x_{i,j} g_j $,
	for certain $ x_{i,j} \in S[y] $ with $ \deg_y (x_{i,j}) \leq \nu_i - \nu_j $.
	As $ (g_1, \ldots, g_{i-1}, f_i ) $ is $ 0 $-normalized, 
	we have $ \vp_D\in \langle u \rangle$,
	and hence, 
	we get $ x_{i,j} \in \langle u \rangle$.
\end{proof}

\smallskip

Let us give an example illustrating why Lemma~\ref{KeyLemma} is important in order to obtain a strict decrease for $ D( \ ) $ in the proof of Proposition~\ref{Prop:m-1_to_m} (and hence of Theorem~\ref{Thm:Strongly_Nlz}).

\begin{Ex}
	\label{Ex:why_Key_Lemma}
	Let $ k $ be a field of characteristic two, $ \car(k) = 2 $
	and let $ a, b, c, d, e \in \IZ_+ $.
	Consider the regular local ring $ R := k [u, y_1, y_2, z]_{\langle u, y_1, y_2, z \rangle } $ 
	and let $ J := \langle g_1, g_2, f \rangle \subset R $ be the ideal generated by
	\[ 
	g_1 = y_2^3 + u^a y_1^3  + u^b,
	\ \ \ \
	g_2 = y_1^4 + u^c y_1^2 y_2^2 + u^d, 
	\ \ \ \
	f = z^5 + u^e y_1 y_2^4.
	\]
	We observe that $ (g_1, g_2,f) $ is $ 0 $-normalized, but not strongly normalized.
	We have $ \nu = n_{(u)}(f) = 5 $ and $ D(f) = (1,4,0) $.
	If we use 
	$ y_1 y_2 g_1  = y_1 y_2^4 + u^a y_1^4 y_2  + u^b y_1 y_2 $ to eliminate $ u^e y_1 y_2^4 $ in  $ f $, we obtain
	\[
	f'^{(1)} := f - u^e y_1 y_2 g_1 = 
	z^5 +  u^{a+e} y_1^4 y_2  + u^{b+e} y_1 y_2.
	\] 
	{We have $ D(f'^{(1)}) = (4,1,0) \prec (1,4,0) = D(f) $. 
	Hence,}
	the induction on the value of $ D(.) $ breaks.
	
	The reason for this is that {$ h : =  y_1 y_2 g_1 $} 
	is not the appropriate element to eliminate 
	{$ u^{e} y_1 y_2^4 $} 
	in {$ f $} as there appears {$ u^a y_1^4 y_2 $}
	and $ (4,1,0) \prec (1,4,0) $.
	We first have to modify $ h $ using {$ g_2 $} (and ${ g_1 } $) in order to eliminate {$ u^a y_1^4 y_2 $} (and the bad terms possibly appearing afterwards) as in the proof of Lemma~\ref{KeyLemma}. 
	We leave the details and the end of the normalization process as an exercise to the reader.
\end{Ex}

Even though, the controlling vector $ D ( \ ) $ is not behaving well, 
one can check that applying Hironaka's normalization procedure eventually leads to a strongly normalized system in the previous example.

The following example show that being polynomial in $ (y) $ is not enough for the strong normalization to exist.
More precisely, we cannot skip the assumption $ \deg_y(f_i) = n_{(u)}(f_i) $. 

\begin{Ex}
	\label{Ex:normal_infinite}
	Let $ R $ be a regular local ring with regular system of parameters $ (u,y_1,y_2) $,
	(e.g., $ R = k[u,y_1,y_2]_{\langle u,y_1,y_2\rangle } $, for any field $ k $).
	Consider the ideal $ J := \langle f_1, f_2 \rangle \subset R $, where
	\[ 
	f_1 = y_1^3 - y_1^4 u - y_1^2 u^2 - u^5
			\hspace{10pt}\mbox{ and }\hspace{10pt}
			f_2 = y_2^5 + y_1^3 u.
	\]
	Clearly, $ (f_1, f_2) $ is $ 0 $-normalized. 
	The normalization process tells us to replace $ y_1^3 = f_1 + y_1^4 u + y_1^2 u^2 + u^5 $ in $ f_2 $ and we obtain
	\[
	g_2 = y_2^5 + y_1^4 u^2 + y_1^2 u^3 + u^6.
	\]
	The monomial $ y_1^2 u^3 $ yields the vertex $ 1 \in \poly{ f_1, g_2}{ u }{ y_1, y_2 } $ which does not vanish by further normalization and which is not solvable.
			Thus we have $ \poly{ f_1, g_2}{ u }{ y_1, y_2 } = \cpoly{ J }{ u } $.
			\\ \indent 
			But $ ( f_1, g_2 ) $ is not normalized in our stronger sense. 
			Again we would have to replace $  y_1^3 = f_1 + y_1^4 u + y_1^2 u^2 + u^5 $ and get
	\[
	h_2 = y_2^5 + y_1^5 u^3 + y_1^3 u^4 + y_1 u^7 + y_1^2 u^3 + u^6.
	\]
	Again, $ y_1^3 $ appears and we run into a loop.
	But the polyhedron does not change anymore. 
\end{Ex}

\begin{Rk}
	In order to obtain the characteristic polyhedron, Hironaka only wants a system of generators that is normalized at every vertex.
	Hence, the more restrictive notion of strongly normalization is not needed in the general case. 
	
	Let $ R $ be a regular local $ G $-ring.
	Let $ (f,u,y) $ be such that $ \poly fuy $ is defined and non-empty.
	If we pass to the data given by the initial forms at any compact face of $ \poly fuy $, then hypothesis~$ (\Pol ) $ holds.
	Hence, one can apply strong normalization there. 
\end{Rk}

\smallskip 

For the proof of Theorem~\ref{Thm:Strongly_Nlz}, it remains to show

\begin{Lem} 
	\label{Lem:remains(u)std}
	Let $ R $ be a regular local ring,
	$ ( u , y ) = ( u_1, \ldots, u_e; y_1, \ldots, y_r ) $ be a regular system of parameters of $ R $, $ J \subset R $ a non-zero ideal,
	and $ (f) = (f_1, \ldots, f_m) $ be a $ 0 $-normalized $ ( u ) $-standard basis of $ J $.
	Let $ ( g) = (g_1, \ldots, g_m) $ be the strongly normalized elements obtained by the preceding normalization process.
	Then, $ ( g_1, \ldots, g_m) $ is also a $ ( u ) $-standard basis for $ J $.
\end{Lem} 

\begin{proof}
	Let $ (y,L ) $ be a reference datum for the $ ( u ) $-standard basis (Definition~\ref{Def:u-standardbasis}).
	Since $ ( f ) $ is $ 0 $-normalized, our construction does not modify the $ 0 $-initial forms, $ \ini_0(f_i) = \ini_0(g_i) $, for $ 1 \leq i \leq m $.
	Hence, properties (3)(a) and (3)(b) in Definition~\ref{Def:u-standardbasis} also hold for $ (g) $.
	Further, the construction of the elements $ h_D $ in the proof of Lemma \ref{KeyLemma}
	and the property $ \ini_L(f_i) = \ini_0 (f_i) $, for $ 1 \leq i \leq m $,
	imply that we also have $ \ini_L(g_i) = \ini_0 (g_i) $.
	Hence, $ (g) $ is a $ (u) $-effective basis, which remained to be proven.
	The assertion follows. 
\end{proof}

We end the subsection with a useful stability result.

\begin{Prop}
	\label{Prop:(Pol)_strong_nlzd_stable}
	Let $ (S,N,k) $ be a regular local ring with regular system of parameters $ (u) = (u_1, \ldots, u_e ) $,
	let $ R = S[y_1, \ldots, y_r]_M $, for $ M = \langle u , y \rangle $
	and $ J \subset R $ be a non-zero ideal.
	Let $ ( f ) = (f)_{(u,y)} = (f_1, \ldots, f_m) $ be a strongly normalized $ (u) $-standard basis for $ J $
	with $ f_i \in S[y] $ and $ deg_y(f_i) = n_{(u)}(f_i) $,
	for $ 1 \leq i \leq m $.
	
	Consider elements $ ( \phi  ) = ( \phi_1, \ldots, \phi_r ) \in N^r \subset S^r $.
	If we define
	\[ 
	z_j := y_j + \phi_j, \ \ \ \mbox{ for } 1 \leq j \leq r ,
	\] 
	then $ (f)_{(u,z)} $ is still a strongly normalized $ (u) $-standard basis for $ J $.
\end{Prop}

\noindent 
In particular,
if $ (\Pol) $ holds, then the property of being strongly normalized is stable under translations of $ (y) $ by elements in $ N $.

\begin{proof}
	First, we observe, that the leading exponents do not change under such a translation. 
	Let $ u^A y^B $ be a monomial appearing in $ f_{i+1} $, for some $ 1 \leq i \leq m-1 $.
	It is mapped to
	\[
	\begin{array}{c} 
	u^A (z_1 + \phi_1)^{B_1} \cdots (z_r + \phi_r)^{B_r} 
	=
	\\[10pt]
	=
	\displaystyle 
	u^A \sum\limits_{C_1 = 0}^{B_1} \ldots \sum\limits_{C_r = 0}^{B_r} 
	\binom{B_1}{C_1} \cdots \binom{B_r}{C_r} 
	\phi_1^{B_1 - C_1} \cdots \phi_r^{B_r - C_r}
	z_1^{C_1} \cdots z_r^{C_r}.
	\end{array} 
	\]
	In particular, $ B \in (C_1, \ldots, C_r ) + \IZ^r_{\geq 0 } $, for every appearing $ C = (C_1, \ldots, C_r ) $.
	So, if $ C \in \exp(\langle f_1, \ldots, f_i \rangle ) $, then $ B \in \exp(\langle f_1, \ldots, f_i \rangle ) $.
	Hence,
	this would contradict the assumption that $ (f)_{(u,y)} $ is strongly normalized. 	
\end{proof}

\medskip

\subsection{Finding Coordinates under hypothesis $(\Pol) $}	
We come to task of finding suitable coordinates $ (z_1, \ldots, z_r) $ if the characteristic polyhedron is empty and $ (\Pol) $ holds,
i.e., such that $ \poly Juz = \cpoly Ju = \varnothing $.
Hypothesis $ (\Pol) $ allows to prove a slightly stronger result, namely, the elements $ (z_1, \ldots, z_r )$ can be obtained by a translation in $ N \subset S $.
Using Proposition~\ref{Prop:(Pol)_strong_nlzd_stable}, we then obtain $ \poly fuz =  \varnothing $,
for a strongly normalized standard basis $ (f) $.  

The following result generalizes \cite{CPcompl} Corollary~3.4.

\begin{Thm}[Claim~\ref{claim:polyvide} for $(\Pol)$]
	\label{Thm:Claim_2.3_pol}
	
	Let $ (S,N,k) $ be a regular local $G$-ring 
	with regular system of parameters $ (u) = (u_1, \ldots, u_e) $.
	Let	$ R = S[y_1,\ldots,y_r]_{M} $ and $ J \subset R $ be a non-zero ideal
	such that the directrix of $ J'$ be determined by $ ( y ) $
	and such that $ (u) $ is a regular $ (R/J) $-sequence.
	Let $ (f) = (f_1, \ldots, f_m ) $ be a $ 0 $-normalized $ (u) $-standard basis of $ J $ with $ f_i\in S[y_1,\ldots,y_r] $ 
	and
	$ \deg_{y}(f_i) = n_{(u)} (f_i) =: \nu_i $,
	for $ 1 \leq i \leq m $.

	Suppose $ \cpoly Ju = \varnothing $ and 
	let $ ( \hy ) = ( \hy_1 ,\ldots, \hy_r ) $ be the elements in $ \hR $ such that $ \poly Ju\hy = \varnothing $ 
	obtained by Hironaka's vertex preparation. 
	There exist 
	$ x_{i,j} \in I( \poly fuy ; \nu_{i}-\nu_j) \cap S[y] $,
	with $ \deg_y (x_{i,j}) \leq \nu_i - \nu_j $, 
	$ 2 \leq j < i \leq m $, 
	and
	$ ( \varphi ) = ( \varphi_1, \ldots, \varphi_r ) $ in $ N \subset S  $ 
	such that,
	if we set 
	\[ g_i = f_i + \sum\limits_{j=1}^{i-1}  x_{i,j} f_j \in  R 
	\ \ 
	\mbox{ and } 
	\ \
	z_a := y_a + \varphi_a, 
	\]
	for $ i \in \{ 1, \ldots, m \} $ and $ a \in \{ 1 , \ldots, r \} $,
	then the following properties hold:
	
		\begin{enumerate}
		\item 	
		$ ( u, z) $ is a regular system of parameters for $  R $,
		
		\item 
		$ ( z ) $ yields the directrix of $ J' = J \cdot R/\langle u \rangle  $,
		
		\item 
		$ \langle z \rangle \cdot \widehat{ R} = \langle \hy \rangle $, 
		
		\item $ z_a \in I(  \poly fuy;1)  $, for $ 1\leq a \leq r $,
		
		\item 
		$ (g_1, \ldots, g_m) $ is a strongly normalized $ ( u ) $-standard basis,
		and
		
		\item 
		$ 	\poly guz = \poly Juz = \cpoly Ju = \varnothing . $
	\end{enumerate} 
\end{Thm}

\begin{proof} 
	Without loss generality, we assume that $ (f_1, \ldots, f_m) $ is strongly normalized. Suppose we could find $ (z ) $ verifying (1)(2)(3)(4), as $ (z) $ is obtained as a translation by an element in $ S $,
		by Proposition~\ref{Prop:(Pol)_strong_nlzd_stable},  the system $ (f_1, \ldots, f_m ) $ fulfills the property (5) and (6).
	
	Therefore, it remains to find $ (z) $ fulfilling (1)(2)(3)(4).

	\smallskip

	Recall that $ k = S / N $ denotes the residue field of $ S $ and that we fixed a strongly normalized $ ( u ) $-standard basis 
		$ (f) = (f_1, \ldots, f_m) $ with $ f_i \in S[y_1, \ldots, y_r] $ and $ \nu_i = \deg_y (f_i) = n_{(u)} (f_i) $.
		Since the characteristic polyhedron is empty, applying finitely many times Hironaka's procedure, we may suppose $\vert v\vert >1$ for all $v \in \poly fuy$:
		the elements $ (f) $ are also a standard basis for $ J $  in the sense of Definition~\ref{Def:u-standardbasis} 
		and their $ 0 $-initial forms coincide with the initial forms at the maximal ideal, $ \ini_0 (f_i) = \ini_M(f_i) $, for $ 1 \leq i \leq m $.
		We have that
		\[
		f_i = F_i (y) + \sum\limits_{B \in \IZ_{\geq 0}^r : |B| < \nu_i } s_{B,i}\, y^B,
		\ \ \ \mbox{ for } 1 \leq i \leq m ,
		\]
		where $  s_{B,i} \in S  $ and $ F_i (y) = \sum\limits_{|B|=\nu_i} c_{B,i} y^B \in S[y] $ are polynomials homogeneous of degree $ \nu_i $,
		with $ c_{B,i} \in S $ (not all necessarily units).

	We use the notation
		\[
		\IK := \mathrm{Quot}(S)
		\ \ \ \mbox{ and } \ \ \
		\IL := \mathrm{Quot}(\hS).
		\]
	Consider the homogenizations $ ( f^h ) = (f_1^h, \ldots, f_m^h ) \in S[y,T]^m $ as elements in $ \IK[y_1, \ldots, y_r, T] $,
		\[
		f_i ^h= F_i (y) + \sum\limits_{B \in \IZ_{\geq 0}^r : |B| < \nu_i } s_{B,i}\, y^BT^{\nu_i- |B|},
		\ \ \ \mbox{ for } 1 \leq i \leq m .
		\]
	Recall that by Hironaka's procedure $ \hy_j = y_j + \phi_j $, for  $ 1 \leq j \leq r $ and where $ \phi_1,\ldots,\phi_r \in \hS $.
	As there is no normalization to do, by Proposition~\ref{Prop:(Pol)_strong_nlzd_stable}, we get that
		\[
		f_i ^h= F_i (y_1+\phi_1 T,\ldots,y_r+\phi_r T) 
		\ \ \ \mbox{ for } 1 \leq i \leq m .
		\]
		In other words, the ideal of the directrix of the cone over $ \IL $ defined by
		$\prod_{1\leq i \leq m}f_i^h\in \IL[
		y_1,\ldots,y_r,T]$ is contained in 
		$ \langle  y_1+\phi_1 T, \ldots, y_r+\phi_r T \rangle $.
		By \cite{CJS} Theorem~2.7, applied for%
		\footnote{Here, we use the notation of \cite{CJS} Theorem~2.7 for clarity in the reference.
		We warn the reader not to get confused with the notation of the present paper.}
		$ R = \hS[y_1,\dots,y_r]_{\langle u, y_1,\dots,y_r \rangle} $,
		the principal ideal 
		$ J = \langle f_1\cdots f_m \rangle \subset R $, 
		and
		$ \mathfrak{p} = \langle \hy_1, \ldots, \hy_r \rangle $, 
		we have 
		\[
			e(R_{\mathfrak p}/ J_{\mathfrak p})\leq e(R/J)-\dim(R/\mathfrak{p}),
		\]
		where $e( \ )$ means the dimension of the directrix at the maximal ideal (cf.~\cite{CJS} Definition~1.18).
		As $ \langle Y_1,\ldots,Y_r \rangle $ is the ideal of the directrix 
		at $ \langle u, y \rangle $ 
		of $ (f_1,\ldots,f_m) $ (considered as elements in $ \hS[y_1,\dots,y_r]_{\langle u, y_1,\dots,y_r \rangle}$),
		which is the same as the directrix of $ f_1 \cdots f_m $, 
		the right hand side of the above inequality is zero.
			Hence, we have 
			\[ 
				e(R_{\mathfrak p}/ J_{\mathfrak p})=0.
			\]
		 In other words, the ideal of the directrix of $ f_1 \cdots f_m $ at $ \mathfrak{p} $ must have $ r $ generators.
		 Therefore the ideal of the directrix of the cone defined by $\prod_{1\leq i \leq m}f_i^h\in \IL[
		y_1,\cdots,y_r,T] $ is $ \langle y_1+\phi_1 T,\cdots,y_r+\phi_r T \rangle $.
		Therefore, the elements $ \phi_1, \ldots, \phi_r $ are uniquely determined in $ \hS $.
		 
		\smallskip 
		 
		Since $ S $ is a $ G $-ring, the extension $ \IK \subset \IL $ is separable, so  we have $\phi_1,\ldots,\phi_r \in \hS \cap  \IK = \hS \cap  \mathrm{Quot}(S) = S $ (the last equality holds by faithful flatness). 
		As  $\phi_1,\ldots,\phi_r $ are the one given by Hironaka's procedure,  
		the system $ (z) = (z_1, \ldots, z_r ) = (y_1+\phi_1, \ldots, y_r + \phi_r ) $, which lies in $ S[y]_{\langle u, y \rangle }$, fulfills (1)(2)(3)(4).
\end{proof}

Let us emphasize that $ \phi_1, \ldots, \phi_r \in R $ are the elements that we obtain from Hironaka.
In theory, as the directrix may be computed (see the appendix of \cite{CJSc}),
there is a {finite} way to compute the elements $ \phi_1, \ldots, \phi_r $ given by Hironaka's procedure that may be infinite.

\smallskip

Note that Theorems~\ref{Thm:ReducEmpty} and~\ref{Thm:Claim_2.3_pol} imply Theorem~\ref{MainThm:Pol}.

%
%
%
%
%
%
%
%
%
%

\medskip 

\section{Empty Characteristic Polyhedron II: Hypothesis $ (*) $}
\label{subsec:empty(*)}
\label{sec:empty*}

Our next goal is to show that we find suitable elements $ (z) = (z_1, \ldots, z_r ) $ in $ R $ extending $ (u) $ to a regular system of parameters,
under the assumption that $ \cpoly Ju = \varnothing $ and hypothesis $ (*) $ is true.

\begin{Prop}
	\label{Prop:EmptyCaseSpecial}
	Let $ R $ be a regular local G-ring, $ J \subset R $ be a non-zero ideal and $ ( u , y ) = ( u_1, \ldots, u_e; y_1, \ldots, y_r ) $ be a regular system of parameters~of $ R $  such that 
	$ (u) $ is a regular $ (R/J) $-sequence
	and 
	$ ( y ) $ determines the directrix of $ J' = J \cdot R' $, where $ R' = R / \langle u \rangle $. 
	Suppose $ \cpoly Ju = \varnothing $ and 
	let $ ( \hy ) = ( \hy_1 ,\ldots, \hy_r ) $ be the elements in $ \hR $ such that $ \poly Ju\hy = \varnothing $ 
	obtained by Hironaka's vertex preparation. 

	We assume that condition $ (*) $ holds for $ (J,R,u) $, i.e., one of the following conditions is true:
	\begin{enumerate}
		\item[(a)]  the dimension of the ridge of $ C_u(J) $ coincides with the dimension of its directrix,
		\item[(b)] or $ \car ( k ) \geq \dfrac{\dim (X)}{ 2} + 1 $, 
		where $ X := \Spec(R/J) $.
	\end{enumerate}
	
	There exist elements $ ( z ) = ( z_1, \ldots, z_r ) $ in $ R $ such that $ ( u, z) $ is a regular system of parameters~for $ R $, $ ( z ) $ yields the directrix of $ J' $, 
	$ \langle z \rangle \cdot \hR = \langle \hy \rangle \subset \hR$,
	and
	$
	\poly Juz = \cpoly Ju = \varnothing .
	$
\end{Prop}

Recall that 
a subscheme $ D \subset X := \Spec (R/J) $ is called a {\em permissible center} for $ X $ if it is regular and $ X $ is normally flat along $ D $ at every point of $ D $ (\cite{CJS} Definition 2.1).
The latter holds if $ D $ is contained in the Hilbert-Samuel locus of $ X $, $ D \subset \HS( X) $, 
see loc.~cit., Theorem~2.3.

\begin{proof}[Proof of Proposition \ref{Prop:EmptyCaseSpecial}]
	Due to Hironaka (Theorem \ref{Thm:Hironaka}) there are $ ( \hf, \hy ) $ in $ \hR $ such that we have $ \poly Ju\hy = \poly{\hf}u\hy = \varnothing $.
	Since $ \poly Ju\hy = \varnothing $ we get that $ \widehat{D} := \Spec ( \hR/ \langle \hy \rangle  ) $ is a permissible center for $ \widehat{X} := \Spec(\hR/\hJ) $, $ \hJ = J \cdot \hR $. 
	This follows from \cite{CJS} Theorem~2.2(2) using $ ( \hf ) $ 
	and the definition of $ \poly{\hf}u\hy $ (Definition~\ref{Def:asso_poly})
	in part (iv) of the cited theorem.
	
	\begin{Claim}\label{claim:bupolyvide}
	Assumption $(*)$ implies that, after blowing up along $ \widehat{D}$, the Hilbert-Samuel function decreases at every point lying above the center.
	\end{Claim}
	
	Take it for granted for the moment.
	
	By Claim~\ref{claim:bupolyvide}, the maximal value of the Hilbert-Samuel function strictly decreases after the blowing up with center $ \widehat D $.
		Since a blowing up is an isomorphism outside of the center, this may only happen if
	we have blown up the entire maximal Hilbert-Samuel stratum $ \HS(\hX) $ of $ \widehat{X} $: $I_{\HS(\hX)} = \langle \, \hy \, \rangle \hR $, 
	where $ I_{\HS(\hX)} $
		denotes the ideal of the Hilbert-Samuel stratum of $ \hX $.

	By \cite{CJS} Lemma 1.37(2), 
	the Hilbert-Samuel stratum of $ \hX $
	(which is given by $ \hJ= J \hR $) 
	is solely determined by that of $ X=  \Spec(R/J) $,
	\begin{equation}\label{eq:HSstratum}
	I_{\HS(\hX)} = I_{\HS(X)} \cdot \hR \ = \langle \, \hy \, \rangle \hR,
	\end{equation}
	where $ I_{\HS(X)}  $ 
	denotes the ideal of the Hilbert-Samuel stratum of $ X $.
	Hence there is an ideal $ I \subset R $ such that $ I \cdot\hR = \langle \, \hy \, \rangle $.
	Since $ R $ is excellent (Lemma \ref{lem:G=>excellent}), $ R / I  $ is regular and the height of $ I $ is $ r $.
	Thus there exist regular elements $ ( z ) = ( z_1 , \ldots,  z_r ) $ in $ R $ such that $ I = \langle z_1, \ldots, z_r \rangle $.
	This implies 
	\begin{equation}
	\label{eq:zparameters} 
	\langle z \rangle \cdot \hR = I\cdot \hR = \langle \hy \rangle .
	\end{equation}

	Therefore $ ( u, z ) $ is a regular system of parameters~for $ R $, $ ( z ) $ determines the directrix of $ J' $
	and we have the desired equality $ \poly{ J }{ u }{ z } = \poly{ J }{ u }{ \hy } = \cpoly{ J }{ u }  = \varnothing $.
	
	\smallskip 
	
	\noindent 
	{\em Proof of the Claim~\ref{claim:bupolyvide}.}
	Let $ \Dir_{\hx}(\hX) $ (resp.~$ \Rid_{\hx}(\hX) $) be the directrix (resp.~the ridge) of the cone defined by $ \ini_{\widehat{M}}(J\hR ) $, $\hx$ is the closed point in $\Spec(\hR)$.
	
	If  $(*)(a)$ holds, then 
	\begin{equation}
	\label{eq:ridgereduced} 
	 (\Rid_{\hx}(\hX))_{red} = \Dir_{\hx} (\hX) 
	\end{equation}
	which follows using \cite{GiraudEtude} Proposition~5.4(i), p.I-27.
		
	It follows from \cite{GiraudEtude} Corollaire~2.4, p.III-13 
	(see also \cite{BernhardThesis} Theorem~(9.2.2)) 
	that if $ \hx ' $ is near to $ \hx $ then 
		$ \hx' $ must be contained in the projective space associated to the quotient group $ \Rid_{\hx}(\hX)/T_{\hx}(D) $,
		where $ T_{\hx} ( D) $ denotes the Zariski tangent space of $ D $ at $ {\hx}$.  By \eqref{eq:ridgereduced}, this  projective space is empty.

		In case $(*)(b)$, this is a consequence of a result by Hironaka \cite{HiroAdd} Theorem~2 which was later improved by Mizutani \cite{Miz} (see also \cite{BernhardThesis} section 10.9).
		This is in connection with so called Hironaka group schemes (e.g., see \cite{GiraudEtude} {\S~2, {p.III-11}}, 
		or \cite{BernhardThesis} chapter 9).
		For a detailed statement and proof, we refer to \cite{CJS} Theorem~2.14.
\end{proof} 

Note that this proves statements (1)(2)(3) of Claim~\ref{claim:polyvide} assuming hypothesis $ (*) $ to be true,
	while (4)(5)(6) will follow from Proposition \ref{Prop:EmptyCaseSpecial_new} below.

\begin{Obs}
	In fact, in order to apply the argument of the previous proof for finding the elements $ (z) = (z_1, \ldots, z_r) $, we only need that
	the ideal of the Hilbert Samuel stratum $ \HS(\hX) $ is $ \langle \hy_1, \ldots, \hy_r \rangle $ (and that $ R $ is a regular local $ G $-ring). 
	
	This assumption is less restrictive than $ (*) $.
	For example, let $ R $ be a regular local $ G $-ring containing a non-perfect field $ k $ of characteristic two,
	and
	let
	$ J = \langle f \rangle $ be a principal ideal such that $ \dim ( \Spec(R/J)) \geq 3 $.
	If $ f = \hy_1^2 + \lambda \hy_2^2 $ in $ \hR $, 
	for $ \lambda \in k \setminus k^2 $, 
	then neither $ (*)(a) $ nor $ (*)(b) $ holds,
	while $ \HS(\hX) = \Spec (\hR/\langle \hy_1, \hy_2 \rangle ) $. 
\end{Obs}

\medskip

\section{Empty Characteristic Polyhedron III: Henselian Rings (following O.~Piltant)}
\label{sec:empty_hensel}

In this section, we discuss Theorem~\ref{MainThm:Hensel}, 
the case when $ R $ is a {\em Henselian} regular local $ G $-ring. 
The arguments for the proof have been outlined to us by Olivier Piltant in a private conversation. 

Analogous to the previous section, we prove

\begin{Thm}
	\label{Thm:coord_pol_hensel}
	Let $ R $ be a regular local G-ring, $ J \subset R $ be a non-zero ideal and $ ( u , y ) = ( u_1, \ldots, u_e; y_1, \ldots, y_r ) $ be a regular system of parameters of $ R $  such that 
	$ (u) $ is a regular $ (R/J) $-sequence
	and 
	$ ( y ) $ determines the directrix of $ J' = J \cdot R' $, where $ R' = R / \langle u \rangle $.
	 
	Suppose $ \cpoly Ju = \varnothing $ and $ R $ is Henselian. 
	Let $ ( \hy ) = ( \hy_1 ,\ldots, \hy_r ) $ be the elements in $ \hR $ such that $ \poly Ju\hy = \varnothing $ 
	obtained by Hironaka's vertex preparation. 

	There exist elements $ ( z ) = ( z_1, \ldots, z_r ) $ in $ R $ such that $ ( u, z) $ is a regular system of parameters~for $ R $, $ ( z ) $ yields the directrix of $ J' $, 
	$ \langle z \rangle \cdot \hR = \langle \hy \rangle \subset \hR$,
	and
	$
	\poly Juz = \cpoly Ju = \varnothing .
	$
\end{Thm}

The theorem provides statements (1)(2)(3) of Claim~\ref{claim:polyvide} under the assumption that $ R $ is Henselian.
The remaining parts (4)(5)(6) will follow from Proposition~\ref{Prop:EmptyCaseSpecial_new} below.

\begin{proof}
	We already know how to prove the result if the reduced ridge coincides with the directrix (Proposition~\ref{Prop:EmptyCaseSpecial}).
	Therefore we can exclude the case that the residue field $ k = R / M $ is perfect (in particular, we may assume $ \car ( R / M ) > 0 $) in the following.
	
	Due to Hironaka (Theorem~\ref{Thm:Hironaka}) there are $ ( \hy ) = ( \hy_1, \ldots, \hy_r ) $ in $ \hR $ 
	and a $ (u) $-standard basis $ (\hf ) = (\hf_1, \ldots, \hf_m) $ of $ \hJ = J \cdot \hR $
	such that 
	\[ 
	\poly \hf u\hy = \varnothing .
	\]
	The latter implies that 
	$ \hD := \Spec ( R / \langle \hy_1, \ldots, \hy_r \rangle ) $ 
	is a permissible center for $ \hX := \Spec(\hR/\hJ ) $
	(i.e., $ \hD $ is regular and $ \hX $ is normally flat along $ \hD $ at every point of $ \hD $, 
	\cite{CJS} Definition 2.1).
	In particular, $ \hD $ is contained in the Hilbert-Samuel locus of $ \hX $, 
	$ \hD \subset \HS(\hX ) $.
	
	Moreover, $ \poly \hf u\hy = \varnothing $ provides that, for all $ 1 \leq i \leq m $, we have
	\[ 
	\hf_i \in \langle \, \hy \, \rangle^{ \nu_i }  \, ,
	\]
	where $ \nu_i := \ord_M ( f_i ) = n_{(u)} (\hf_i) $,
	which follows using the definition of the polyhedron associated to $ (\hf,u,\hy) $ (Definition~\ref{Def:asso_poly}).

	We introduce the following ideals: 
	\[ 
	\begin{array}{c}
	\hfP := \langle \hy_1, \ldots, \hy_r \rangle \subset \hR
	\ \ \ \mbox{ and} \ \ \ 
	\fP : = \hfP \cap R = \langle \varphi_1 , \ldots, \varphi_d \rangle \subset R.
	\end{array} 
	\]
	By \cite{CJS} Lemma~1.37(2) the Hilbert-Samuel stratum of $ \widehat{J} = J \cdot \widehat{R} $ is solely determined by that of $ J $.
	Therefore, we have $ \fP \neq \langle 0 \rangle $.
	Furthermore, 
	we have a {\em unique} decomposition 
	\begin{equation}
	\label{decompP}
	\fP \hR = \fQ_1 \cap \ldots \cap \fQ_s \subset \hfP.
	\end{equation}
	where the $\fQ_i $ are prime ideals, $ 1 \leq i \leq s $ 
	(\cite{EGA_IV}~ (7.8.3.1)(vii): as $ {R/\fP} $ is reduced,  the completion  $ \hR / \fP \hR $ is reduced).
	The uniqueness implies that there are no embedded components.
	
	Since $ R $ is excellent (Lemma~\ref{lem:G=>excellent}),
	the fiber $ \Spec ( \hR_\hfP / \fP \hR_\hfP ) $ is regular and thus there exists a regular system of parameters for $ \hR_\hfP / \fP \hR_\hfP  $.
	Moreover,   a regular system of parameters for  $ \hfP \cdot \hR_\hfP \subset \hR_\hfP $ can be picked as the set consisting of the pull back of a regular system of parameters for $ \hR_\hfP / \fP \hR_\hfP $ and   a regular system of parameters for  $ \fP \cdot R_\fP = \langle \vp_1, \ldots, \vp_c \rangle_{R_{\fP}} \subset R_\fP $, extracted from a system of generators $ \langle \vp_1, \ldots, \vp_c, \ldots, \vp_d \rangle$ of $\fP \subset R$.
	Therefore, we may assume, without loss of generality,
	%
	\[ 
	\hfP \hR_\hfP  = \langle \, \vp_1, \ldots, \vp_c, \, \hy_{c + 1 }, \ldots, \hy_r \, \rangle_{ \hR_\hfP } \subset \hR_\hfP .
	\]
	and
	\[ 
	\gr_\fP ( R_\fP ) = \kappa ( \fP ) [ \phi_1, \ldots, \phi_c ],	
	\]
	where $ \phi_i := \ini_\fP ( \vp_i ) $, for $ 1 \leq i \leq c $.
	We consider the polynomial ring
	\[ 
	\gr_\hfP ( \hR_\hfP ) 
	\cong \kappa (\hfP) [ \tilde\phi_1, \ldots, \tilde\phi_c, \hY_{ c + 1 }, \ldots, \hY_r ]
	,
	\]
	where $ \hY_j := in_\hfP ( \hy_j ) $, for $ 1 \leq j \leq r $, 
	and $ \tilde\phi_i :=  \ini_\hfP ( \vp_i ) $, for $ 1 \leq i \leq c $.
	The natural map
	\[ 
	\gr_\fP ( R_\fP ) \to  \gr_\hfP ( \hR_\hfP ) 
	\]
	sends $\phi_i$ to $ \tilde\phi_i $, for $ 1 \leq i \leq c $, so it is injective
	and homogeneous of degree $1$. 
	In particular, this implies that %
	\[ 
		\deg_\hfP ( \tilde\phi_i ) =  \ord_\hfP ( \vp_i ) = 1,
		\ \ \
		\mbox{ for all }
		i \in \{ 1, \ldots, c \}.
	\]
	But so far we only know:
	\[ 
	\ord_M ( \vp_i ) \geq 1.
	\]
	
	
	Let us now come back to decomposition \eqref{decompP},
	$
	\fP \hR = \fQ_1 \cap \ldots \cap \fQ_s \subset \hfP.
	$
	
	\begin{Claim}
		\label{Claim:proof_hensel}
	The following properties hold:
		\begin{enumerate}
			\item 
			We have
			$ \fQ_i \cap R = \fP $,
			for all  $ i \in \{  1, \ldots, s \} $.
			
			\item 
			There is at least one $ i_0 $, without loss of generality $ i_0 = 1 $, such that 
			%
			%
			$ \fQ_1 \subset \hfP $.
			
			\item 
			We have 
			$ ( \, \fP \hR \, )_{\hfP} = \hfP \hR_{\hfP} $.
		\end{enumerate}
	\end{Claim}

	\begin{proof} 
		(1)
	Suppose $ \fP \varsubsetneq \fQ_i \cap R $.
	The flatness of $ R \to \hR $ implies that we have the going down property, 
	i.e.,
	there is an ideal $ \fQ' \subset \hR $ such that $ \fQ' \subsetneq \fQ_i $ and $ \fQ' \cap R = \fP $.
	But this contradicts the uniqueness of the decomposition (\ref{decompP}).

	\smallskip 
	
	(2)
	If we have $ \fQ_i \subset \hfP $, for some $ i \in \{ 2, \ldots, s \} $, we obtain the assertion after renaming the components.
	Hence, suppose that $ \fQ_i \not\subset \hfP $, for $ 2 \leq i \leq s $.
	Then, there exist elements $ a_i \in \fQ_i $ such that $ a_i \notin \hfP $.
	For every element $ a \in \fQ_1 $, we have
	$
	a \cdot a_2 \cdots a_s \in \hfP
	$
	and, 
	since $ \hfP $ is a prime ideal, we must have $ a \in \hfP $, 
	which provides $ \fQ_1 \subset \hfP $ as desired.
	
	\smallskip 
	
	(3) As $ J \subset R $, its directrix lies in 
	$ \gr_\fP ( R_\fP ) = \kappa (\fP) [ \phi_1, \ldots, \phi_c ] $.
		By the minimality of the directrix, 
		we have 
		\[ 
			e({\hR_{\hfP}/ J\hR_{\hfP}})
			:=
			\dim(\Dir({\hR_{\hfP} / J\hR_{\hfP}}))\geq r-c .
		\]
		On the other hand, \cite{CJS}~Theorem~2.7 provides  
		(Recall that $ \hfP = \langle \hy_1, \ldots, \hy_r \rangle $)
		\[ 
			r-c\leq e({\hR_{\hfP} / J\hR_{\hfP}}) 
			\leq 
			e({\hR / J\hR}) - \dim({\hR_{\hfP} / \hfP \hR_{\hfP} })
			=0.
		\]
		So, we have $ c = r $
		and $  \Spec ( \hR_\hfP / \fP \hR_\hfP ) $ is regular of dimension $0$,
		i.e., $ \hR_\hfP / \fP \hR_\hfP $ is a field.
	\end{proof}

	Thus, since $ ( \, \fP \hR \, )_{\hfP} = \hfP \hR_{\hfP} $ and  $ \fQ_1 \subset \hfP $ (Claim~\ref{Claim:proof_hensel}(2) and (3)), 
	we get 
	\[  
	\dim (\hR_\hfP / \hfP\hR_\hfP)  
	= \dim (\hR_\hfP / \fQ_1 \hR_\hfP)  
	= \dim (R_\fP / \fP R_\fP ).
	\]
	Since $ R $ is catenary, this implies
	$ 
	\dim (\hR / \hfP)  = \dim (\hR / \fQ_1) =  \dim (R / \fP)
	$.
	Hence, since $ \fQ_1 \subset \hfP $,
	we have
	\[ 
	\fQ_1 = \hfP.
	\] 
	This means that (\ref{decompP}) becomes			
	\begin{equation}
	\label{decompP2}
	\fP \hR = \hfP \cap \fQ_2 \cap \ldots \cap \fQ_s .
	\end{equation}
	
	\medskip

	Up to this point, we have not used the property that $ R $ is Henselian yet. 
	We claim that $ R $ Henselian implies $ s = 1 $: 
	First, $ A := R/P $ is a reduced Henselian excellent local ring and $ \Spec(A) $ is irreducible. 
	Thus, \cite{EGA_IV} Proposition~(18.6.12) 
	provides that $ A $ is unibranch 
	and
	the latter is equivalent to $ \widehat A $ being integral, by \cite{EGA_IV} Scholie~(7.8.3)(vii). 
	So, $ s = 1 $.
	
	In other words, \eqref{decompP2} is $ P \hR = \hP $. 
	With the notations of Proposition~\ref{Prop:EmptyCaseSpecial}, we get: $I_{\HS(\hX)} = \langle \, \hy \, \rangle \hR $. 

	Using the identical argument, 
	(starting from \eqref{eq:HSstratum}),
	the assumption that $ R $ is excellent finishes the proof of Theorem~\ref{Thm:coord_pol_hensel}.
\end{proof}

Again, let us emphasize that the previous proof provides the following result (without the assumption $ R $ Henselian)

\begin{Prop}
	\label{Prop:decomposition_P_hR}
	Let $ R $ be a regular local G-ring, $ J \subset R $ be a non-zero ideal and $ ( u , y ) = ( u_1, \ldots, u_e; y_1, \ldots, y_r ) $ be a regular system of parameters of $ R $  such that 
	$ (u) $ is a regular $ (R/J) $-sequence
	and 
	$ ( y ) $ determines the directrix of $ J' = J \cdot R' $, where $ R' = R / \langle u \rangle $.
	
	Suppose $ \cpoly Ju = \varnothing $.
	Let $ ( \hy ) = ( \hy_1 ,\ldots, \hy_r ) $ be the elements in $ \hR $ such that $ \poly Ju\hy = \varnothing $ 
	obtained by Hironaka's vertex preparation. 
	If we define 
	\[  
		\hfP := \langle \hy_1, \ldots, \hy_r \rangle \subset \hR 
		\ \ \
		\mbox{ and } 
		\ \ \
		\fP : = \hfP \cap R \subset R,
	\]
	then we have a unique decomposition
	\[
		\fP \hR = \hfP \cap \fQ_2 \cap \ldots \cap \fQ_s ,
	\]
	for certain primes ideal $ Q_2, \ldots, Q_s \subset \hR $.
\end{Prop}

\medskip 

\section{Empty Characteristic Polyhedron IV: Finding Generators}
\label{sec:empty_generators}

In this section, we explain how to obtain suitable generators $ (g_1, \ldots, g_m) $ for the ideal $ J \subset R $ such that 
$ \poly guz = \cpoly Ju = \varnothing $, 
where $ (z) = (z_1, \ldots, z_r) $ are the elements constructed in the previous subsections.
For this step, it is not necessary to assume that any of the hypotheses $ (*)$ or $ R $ Henselian hold.
(Recall that, if $(\Pol)$ holds, we obtained the appropriate generators in a different way).

\begin{Prop}
	\label{Prop:EmptyCaseSpecial_new}
	Let $ R $ be a regular local G-ring, 
	$ J \subset R $ be a non-zero ideal and 
	$ ( u , y ) = ( u_1, \ldots, u_e; y_1, \ldots, y_r ) $ be a regular system of parameters for $ R $ 
	such that 
	$ (u) $ is a regular $ (R/J) $-sequence and
	$ ( y ) $ determines the directrix of $ J' = J \cdot R' $, where $ R' = R / \langle u \rangle $. 
	Let $ (f) = (f_1, \ldots, f_m) $ be a $ 0 $-normalized $ ( u ) $-standard basis for $ J $.
	
	Suppose $ \cpoly Ju = \varnothing $.
	Let  $ ( \hy ) = ( \hy_1 ,\ldots, \hy_r ) $ be the elements in $ \hR $ obtained by Hironaka's vertex preparation such that $ \poly Ju\hy = \varnothing $.
	Assume there exist $(z) = (z_1, \ldots, z_r ) $ in $R$ such that $(u,z)$ is a regular system of parameters for $ R $ 
	with 
	$  
		\langle z \rangle \cdot \hR = \langle \hy \rangle .
	$ 
	
	Then we have $ z_j \in  I(\poly fuy ;1) $ (Definition \ref{Def:I(Delta,b)}), for $ 1 \leq j \leq r  $, and $ \poly fuz  \subset \poly fuy  $.
	Furthermore, there exists a $(u)$-standard basis $ (g) = (g_1, \ldots, g_m ) $ for $J$ in $ R $ 
	such that $ \poly guz  = \varnothing $ and 
	\[ 
		g_i = f_i + \sum_{a=1}^{i-1}  h_{i,a} f_a \in R ,
		\ \ \  
		h_{i,a} \in I( \poly fuy ; \nu_{i}-\nu_a), 
		\ \ 1\leq i\leq m.
	\] 
\end{Prop}

By Proposition \ref{Prop:EmptyCaseSpecial} (resp.~Theorem~\ref{Thm:coord_pol_hensel}), the assumptions of the above theorem hold true if hypothesis $ (*) $ is true (resp.~if $ R $ is Henselian).
Therefore, the mentioned proposition and the previous one imply Claim~\ref{claim:polyvide} if hypothesis $ (*) $ holds (resp.~if $ R $ is Henselian).
Together with Theorem~\ref{Thm:ReducEmpty}, we then obtain Theorems~\ref{MainThm:*} and~\ref{MainThm:Hensel}.

\begin{proof}
	Since the characteristic polyhedron is empty, we have 
	$ \nu_i = \ord_M(f_i) $, for all $ i \in \{ 1, \ldots, m \} $, 
	and the elements $ (y) = (y_1, \ldots, y_r) $ determine the directrix of $ J $.
	In particular, every standard basis for $ J $ is a $ ( u ) $-standard basis, which follows by \cite{CJS} Lemma~6.8.
	
	As $R \to \hR$ is faithfully flat, for any ideal $I\subset R$, we have $I\hR \cap R=I$.
	By Hironaka's construction (Theorem \ref{Thm:3.17}), we have $ \hy_j = y_j + \widehat{\varphi}_j $, for some $ \widehat{\varphi}_j\in \langle u \rangle \cdot \hR $, for every $ j \in \{ 1, \ldots, r \} $.
	In particular, 
	$ \langle \hy \rangle \subset I(\Delta(f;u;y);1) \cdot \hR $
	and therefore,
	\[
	 \langle z \rangle = \langle \hy \rangle \cap R  \subset  I(\Delta(f;u;y);1)\cdot \hR \cap R = I(\Delta(f;u;y);1).
	\] 
	This gives the first assertion.
	
	By Hironaka's construction, applied to $(f_1,\ldots,f_m)$ and $(y_1,\ldots,y_r)$, 
	we find $ (\hg) = (\hg_1, \ldots, \hg_m) \in \hR^m  $, with
	\begin{equation}
	\label{eq:hg}
	\hg_1=f_1\in \langle z \rangle^{\nu_1}, \ \ \
	{\hg_i}=f_i+\sum_{a=1}^{i-1} \widehat{h_{i,a}} \,f_a\in \langle z \rangle^{\nu_i} \hR = \langle \hy \rangle^{\nu_{i}},
	\end{equation}  
	for $ 2 \leq i \leq m $, 
	with 
	$ 
	\widehat{h_{i,a}}\in I(\Delta(f;u;y);\nu_{i}-\nu_a)\hR
	$ 
	and $\poly \hg u \hy=\varnothing$.
	
	As $ (f) $ is $ 0 $-normalized (which is stable under vertex preparation), we have 
	$ \ini_M(\hg_i) = \ini_M(f_i) $.
	Further,
	$
		\hg_i  
		\in  \langle  z \rangle ^{\nu_i}  \hR
	$, 
	so $\poly{\hg}uz=\poly{\hg}u{\hy}=\varnothing $.

	Since $ \hg_i \in \langle  z \rangle ^{\nu_i} \hR $, for all  $ i \in \{ 1, \ldots, m \} $,
		we obtain that $ \Spec(\hR/  \langle  z \rangle \hR) $ is permissible for $ \Spec(\hR/J \hR) $.
	Hence, since $ R \to \hR$ is faithfully flat, $ D := \Spec(R/\langle z \rangle ) $  is permissible for $ X := \Spec(R/J) $ at the origin $ x = \Spec(R/M) \in D $, by \cite{CJS} Theorem 2.2(3).
	In particular, $ X $ is normally flat along $ D $ at $ x $.  
	By \cite{CJS} Theorem~2.2(2)(iv), we can find $ (g') = (g'_1, \ldots, g'_m) \in R^m $ a standard basis of $J$,  such that,
	for $ 1 \leq i \leq m $, 
	\begin{equation}
	\label{eq:g'}
	g'_i=\sum_{a=1}^m  h'_{i,a}\, f_a\in \langle z \rangle^{\nu_i },
	\ \ \
	\mbox{for }
	h'_{i,a}\in R.
	\end{equation}
	As $(g')$ is a standard basis,  $\langle \ini_M (g') \rangle= \ini_M(J) = \langle \ini_M (f) \rangle$ 
	and we can adapt the basis $(g')$ to get
	\begin{equation}
	\label{eq:g'_ini}
		\ini_M(g'_i)= \ini_M(f_i ), 
		\ \ \ \mbox{ for } 1 \leq i \leq m. 
	\end{equation}
	In particular, we have $ \poly{g'}{u}{z} = \varnothing $ and $ g'_1 = f_1 = \hg_1 $, without loss of generality.
	By \eqref{eq:hg}, \eqref{eq:g'}, and \eqref{eq:g'_ini}, we have 
	$ \hg_i \equiv g'_i \mod M\langle z \rangle^{\nu_i} $, and we get, for all $i \in \{  2, \ldots, m \}  $,
	\[
	\sum_{a=1}^m h'_{i,a}\,f_a-f_{i}
	\ = \ 
	g'_i - f_i 
	\ \equiv \
	\hg_i - f_i 
	\ = \
	\sum_{a=1}^{i-1} \widehat{h_{i,a}}\, f_a
	\mod M\langle  z \rangle^{\nu_i} \hR .
	\]
	
	\noindent 
	As $\sum\limits_{a=1}^{i-1} \widehat{h_{i,a}} \, f_a\in 
	\sum\limits_{a=1}^{i-1}  f_a \cdot  I(\poly fuy;\nu_{i}-\nu_a)\hR$, by faithful flatness, 
	\[ 
	g'_i - f_i = \,
	\sum_{a=1}^m  h'_{i,a} f_a  - f_i  \in \sum_{a=1}^{i-1} f_a \, I(\Delta(f;u;y);\nu_i-\nu_a)+M\langle  z \rangle ^{\nu_i}.
	\] 
	Hence, for $2\leq i \leq m$, there is an expansion: 
	\[ 
	g'_{i} = 
	f_i + \sum_{a=1}^{i-1}  h_{i,a} f_a + k_i \in R, 
	\]
	\[
	\mbox{for some } \ k_i\in M\langle  z \rangle^{\nu_i}
	\  \mbox{ and } \  
	h_{i,a}\in I(\poly fuy ;\nu_i-\nu_a) \subset R.
	\] 
	
	\noindent 
	Since $ k_{i} \in M\langle  z \rangle ^{\nu_i} $, these $k_i$ cannot contribute to the polyhedron:
	$ \poly{g}{u}{z} =\poly{g'}{u}{z} = \varnothing $ with $g_1 := f_1$, $g_i:=f_{i}+\sum\limits_{a=1}^{i-1} h_{i,a} f_a$,  for $2\leq i \leq m$.
\end{proof}

%
%
%
%
%
%
%
%
%
%

\medskip

\section{On the General Case and More Examples}
\label{sec:general}

We end with some remarks on the general case. 
First, let us summarize which of the results are valid for any regular local $ G $-ring
(without assuming $ R $ to be Henselian or hypothesis $(*) $ or $ (\Pol) $ to hold):
\begin{itemize}
	\item 
	{\em Theorem~\ref{Thm:ReducEmpty}}:
	The reduction from a non-empty characteristic polyhedron to the case of an empty one.
	
	\smallskip 
	
	\item
	{\em Proposition~\ref{Prop:Pol_often_and_else_smaller_dim}}:
	Along a given face of $ \poly fuy \neq \varnothing $,
	the data either fulfills hypothesis $ (\Pol) $ (if $ \ell(f,u,y) > 0 $)
	or is contained in a smaller dimensional regular local $ G $-ring (if $ \ell(f,u,y) = 0 $).
	
	\smallskip 
	
	\item 
	{\em Proposition~\ref{Prop:decomposition_P_hR}}:
	Let $ (\hy) = (\hy_1, \ldots, \hy_r ) $ be the coordinates obtained by Hironaka's vertex preparation.
	Suppose that $ \cpoly Ju = \varnothing $.
	If we set $ \hP := \langle \hy \rangle \subset \hR $ and $ P := \hP \cap R \subset R $, 
	then we have a unique decomposition 
	$ 
		P\cdot \hR = \hP \cap Q_2 \cap \ldots \cap Q_s,
	$ 
	for $ Q_2, \ldots, Q_s \subset \hR $  prime ideals and $ s \geq 1 $.
	
	\smallskip 
	
	\item
	{\em Proposition~\ref{Prop:EmptyCaseSpecial_new}}:
	Once we have found appropriate coordinates $ (z) = (z_1, \ldots, z_r ) $ in $ R $ such that $ \poly Juz = \varnothing $, 
	we can find a $ (u) $-standard basis $ (g) = (g_1, \ldots, g_m ) $ for $ J $ such that $ \poly guz = \varnothing $.
\end{itemize}

\begin{Rk} 
An important step in the proof for the existence of $ (z) $ under hypothesis $ (*) $ or $ R $ Henselian, was to construct an ideal $ I \subset R $ with the property $ I \cdot \hR= \langle \hy \rangle $. 
The excellence of $ R $ then finished the proof. 
In case $ (*) $, this was obtain since 
$ I_{\HS(\hX)} = \langle \hy \rangle  $. 
In Example \ref{Ex:HS_morecomp}, we show that the this property does not hold, in general.

Nonetheless, if we can find a measure for the complexity of a singularity
(as an alternative to the Hilbert-Samuel function) defining a closed subscheme $ W \subset \Spec(R) $
such that the ideal $ I_W $ of $ W $ has the property $ I_W \cdot \hR = \langle \hy \rangle  $, the same arguments can be applied to finish the proof. 

One candidate to study is the extension of the Hilbert-Samuel function by the codimension of the ridge. 
In \cite{CPS}, a proof of the upper semi-continuity for this refinement is given, which is hidden in Giraud's work \cite{GiraudEtude}.
Hence, the locus $ W $, where it becomes maximal is closed.
Furthermore, using \cite{BernhardThesis} Main Theorem~C, one can deduce that the ideal 
of the maximal locus of this invariant in $\hR $ is $ \langle \hy \rangle $ if $ \dim ( X ) \leq 5 $.
Dietel introduces a refinement of the ridge and 
makes use Oda's classification of Hironaka schemes \cite{Oda} (see also \cite{BernhardThesis} section 10.9).
Due to increasing complexity, there is no classification of Hironaka schemes in higher dimension and more work is required.
\end{Rk}

We claim that the following result holds,
which characterizes $ \langle \hy \rangle $ if the characteristic polyhedron is empty. 
Unfortunately, we could not make significant use of it so far,
but we believe that it deserves to be mentioned.

\begin{Claim}
	Let $ R $ be a regular local G-ring, 
	$ J \subset R $ be a non-zero ideal and 
	$ ( u , y ) = ( u_1, \ldots, u_e; y_1, \ldots, y_r ) $ be a regular system of parameters for $ R $ 
	such that 
	$ (u) $ is a regular $ (R/J) $-sequence and
	$ ( y ) $ determines the directrix of $ J' = J \cdot R' $, where $ R' = R / \langle u \rangle $. 
	
	Suppose $ \cpoly Ju = \varnothing $.
	Let  $ ( \hy ) = ( \hy_1 ,\ldots, \hy_r ) $ be the elements in $ \hR $ obtained by Hironaka's vertex preparation such that $ \poly Ju\hy = \varnothing $.
	Then, $ \hD := \Spec ( \hR / \langle \hy \rangle  ) $ is the unique permissible center for $ \hX := \Spec ( \hR /  J\hR ) $ of maximal dimension,
	and every permissible center is contained in $ \hD $.
\end{Claim}

\begin{proof}[Proof (outline)]
	First of all, $ \hD $ is permissible.
	If there is a larger center containing $ \hD $, then we get a contradiction to the minimality of the number of generators of the directrix.
	If there is a permissible center that is transversal to $ \hD $, then we get again a contradiction to the property that the system $ ( \hy ) $ yields the directrix.
	Suppose there is a permissible center that is tangent to $ \hD $ and denote the corresponding ideal $ I' $.
	Then we get that the associated polyhedron cannot be empty and thus $ g \notin I'^n $, i.e., $ \Spec ( R / I')  $ is not a permissible center.
\end{proof}

\smallskip

	In general, $(*)(a)$ does only hold after a finite purely inseparable extension of the residue field $ k $.
	But by doing this the characteristic polyhedron may change drastically,
	as we illustrate in the following example.

	Furthermore, if differential operators map the ring $ R $ into itself, then one can deduce $(*)(a)$ by using them.
	But this is also not true in general 
	(see \cite{MatsumuraRing} Nomura's Theorem 30.6, p.~237).

\begin{Ex}
	\label{Ex:change_poly_closure}
Let $ R $ be a regular local ring containing a field $ k $ of characteristic $ p > 2 $ and set $ q = p^e $ for some $ e \in \IZ_{+} $.
Suppose $ (u_1, u_2, y_1, y_2) $ is a regular system of parameters for $ R $.
Consider the element
\[ 
f = y_1^q + \lambda y_2^q + \lambda u_1^{a q } + \lambda^2 u_2^{ b q },
\] 
where $ \lambda, \lambda^2 \in k \setminus k^q $ are $ q $-independent.
If we pass to the field extension $ k' := k[t] / \langle t^q - \lambda \rangle $ over $ k $, then
$ f  = y_1^q + t^q y_2^q + t^q u_1^{a q } + t^{2q} u_2^{ b q } $ in $ k' $.
Hence, hypothesis $(*)(a) $ holds and the directrix is given by $ z_0 := y_1 + t y_2 $.
If we set $ z := y_1 + t y_2 + t u_1^a + t^2 u_2^b $, we get $ f = z^q $ and the characteristic polyhedron over $ k' $ is empty.

Another way of deducing $(*)(a)$ is by applying the derivative $ \frac{\partial}{\partial \lambda} $. 
In this example minimizing the polyhedron for $ f $ is the same as minimizing the one of $ \langle f, \frac{\partial f}{\partial \lambda} \rangle $.
Further, we stay in the local ring $ R $ and get $ z_1 = y_1 $ and $ z_2 = y_2 + u_1^b $.
In $ R $ we cannot solve the vertex corresponding to the monomial $ \lambda^2 u_2^{ b q } $.
Hence the characteristic polyhedron is non-empty.

In conclusion, it is not clear how using purely inseparable extensions shall provide information on the original characteristic polyhedron. 
\end{Ex}

Here is another example, where the coordinates can be obtained using derivatives by constants, but this time with empty characteristic polyhedron. 

\begin{Ex}
	Let $ R $ be a regular local ring containing a field $ k $ of characteristic $ p > 0 $.
	Suppose $ (u_1, u_2, y_1, y_2) $ is a regular system of parameters for $ R $.
Consider the element
\[
f = y_1^p + \lambda y_2^p + u_1^{ 2p } + ( \lambda + 1 ) u_2^{ 2p },
\]	
where $ \lambda \in k \setminus k^p $.
By applying the derivative $ \frac{\partial}{\partial \lambda} $, we see that the desired elements are $ z_1 = y_1 + u_1^2 + u_2^2 $ and $ z_2 = y_2 + u_2^2 $.
We get $ f = z_1^p + \lambda z_2^p $. 
\end{Ex}

The following example (which is based on an example by Hironaka, see \cite{HiroAdd} Theorem 3, p.331) illustrates that $\Spec ( \hR / \langle \hy_1, \ldots, \hy_r \rangle ) $ is not necessarily equal to
the Hilbert-Samuel locus,
in general.
Moreover, it shows that the singular locus of the maximal Hilbert-Samuel locus does not characterize the ideal $ \langle \hy \rangle $.

\begin{Ex}
\label{Ex:HS_morecomp}
Consider the variety given by
\[
f = x^2 + \lambda y^2 + \mu z^2  + \lambda \mu w^2 + yz u^{11} 
\in R := k[x,y,z,w, u]_{\langle x,y,z,w, u \rangle },
\]
over a field $ k $, $ \car ( k ) = 2 $ and $ [k^2 (\lambda, \mu) : k^2 ] = 4 $.
The order at the origin is $ n = 2 $, the ideal of the directrix is given by $ \langle X, Y, Z, W \rangle $ and $ f \in \langle x, y, z, w \rangle^2 $.
The derivatives are
$ \frac{\partial f }{\partial y } = z u^{11},
\frac{\partial f }{\partial z } = y u^{11},
\frac{\partial f }{\partial \lambda } = y^2 + \mu w^2 $ 
and
$ \frac{\partial f }{\partial \mu } =  z^2  + \lambda w^2
$.
Therefore the locus of maximal order (which coincides with the maximal Hilbert-Samuel locus because we are considering a hypersurface) is
\[
V ( x, y, z,  w ) \cup V ( u, x^2 + \lambda \mu w^2, y^2 + \mu w^2 , z^2 + \lambda w^ 2 ).
\]	
Note that the singular locus of this is the origin $ V ( x,y,z,w,u) $.
\end{Ex}

\begin{Ex}
Let $ R $ be a regular local ring containing a field $ k $ of characteristic $ p > 0 $.
Suppose $ (u_1, u_2, y_1, y_2) $ is a regular system of parameters for $ R $.
Consider the element
\[
f = y_1^p + \lambda y_2^p + \lambda y_1^{p^2} + y_2^{p^2} + u_1^{p^3} + \lambda u_2^{p^3},
\]
where $ \lambda \in k \setminus k^p $.
A possible idea would be to introduce a weight on the coordinate $ y_2 $ such that we artificially create condition $(*)(a)$.
But if we do so, then we will never see that we have to solve $ y_2^{p^2 } $ because it will be in the interior of the corresponding polyhedron.

It is not hard to see that the characteristic polyhedron is empty and the desired parameters are $ z_1 := y_1 + y_2^p + u_1^{p^2} $ and $ z_2 = y_2 + y_1^p + u_2^{p^2} $.
\end{Ex}

%
%
%
%
%
%
%
%
%
%
%
%
%
%
%
%
%

\end{document}